\newcommand{\R}{\mathbb R}
\newcommand{\K}{\mathbb K}
\DeclareMathOperator{\gin}{gin}
\DeclareMathOperator{\inid}{in_<}
\DeclareMathOperator{\reg}{reg}
\DeclareMathOperator{\areg}{\widehat{\operatorname{reg}}}
\DeclareMathOperator{\SP}{SP}
\DeclareMathOperator{\NP}{NP}
\DeclareMathOperator{\wald}{\widehat \alpha}
\let\emptyset \varnothing
\theoremstyle{plain}
\newtheorem{thm}{Theorem}[section]
\newtheorem*{thm*}{Theorem}
\newtheorem{lem}[thm]{Lemma}
\newtheorem{prop}[thm]{Proposition}
\newtheorem{cor}[thm]{Corollary}
\newtheorem{conj}[thm]{Conjecture}
\newtheorem{introthm}{Theorem}
\theoremstyle{definition}
\newtheorem{notation}[thm]{Notation}
\newtheorem{example}[thm]{Example}
\newtheorem{obs}[thm]{Observation}
\newtheorem{defn}[thm]{Definition}
\numberwithin{equation}{section}
\title{Asymptotic~invariants of~symbolic~powers
	of~binomial~edge~ideals}
\author[D. Belotserkovskiy]{Dennis Belotserkovskiy}
\address{Dennis Belotserkovskiy, Department of Mathematics, Colgate University, 13 Oak Drive, Hamilton NY, 13346, USA}
\email{dbelotserkovskiy@colgate.edu}
\author[M. Landín]{Mariana Landín}
\address{Mariana Landín, Department of Mathematics, Universidad de Colima, C. Bernal Díaz del Castillo 340, Villas San Sebastián, 28045 Colima, Colima, México.}
\email{mlandin1@ucol.mx}
\author[C. Ruppe]{Charlie Ruppe}
\address{Charlie Ruppe, Department of Mathematics and Statistics, Carleton College, 1 North College Street, Northfield, MN 55057, USA}
\email{ruppec@carleton.edu}
\author[L. Teryoshin]{Lizzy Teryoshin}
\address{Lizzy Teryoshin, 
	Department of Mathematics, University of California, San Diego, 9500 Gilman Drive \#0112, La Jolla, CA 92093, USA}
\email{eteryoshin@ucsd.edu}
\keywords{Binomial edge ideal, generic initial ideal, Waldschmidt constant,  symbolic powers, symbolic polyhedron, asymptotic regularity.}
\subjclass[2010]{Primary: 13C70 Secondary: 05C25, 52B20, 05E40}
\begin{document}
	\begin{abstract}
		To a graph $G$ one associates the binomial edge ideal $J_G$  generated by a collection of binomials corresponding to the edges of $G$. In this paper, we study the asymptotic behavior of symbolic powers of $J_G$, its lexicographic initial ideal $\inid(J_G)$, and its multigraded generic initial ideal $\gin(J_G)$. We focus on the Waldschmidt constant, $\widehat{\alpha}$, and asymptotic regularity, $\widehat{\reg}$, which capture linear growth of minimal generator degrees and Castelnuovo--Mumford regularity. 
		We explicitly compute $\wald(J_G)$ and $\wald(\inid(J_G))$, and compare the Betti numbers of the symbolic powers of $J_G$ and $J_H$, where $H$ is a subgraph of $G$. 
		To analyze $\inid(J_G)$ and $\gin(J_G)$, we use the symbolic polyhedron, a convex polyhedron that encodes the elements of the symbolic powers of a monomial ideal. We determine its vertices via $G$'s induced connected subgraphs and show that $\wald(\gin(J_G))=\wald(I_G)$, where $I_G$ is the edge ideal of $G$. This yields an alternate proof of known bounds for $\wald(I_G)$ in terms of $G$'s clique number and chromatic number.
	\end{abstract}
	
	\maketitle
	\tableofcontents
	
	\section{Introduction}
	In recent years, there has been extensive research on the properties of symbolic powers of ideals,  
	mostly through comparison with their ordinary powers \cite{bocci2010,ein_uniform_2001,hochster2002,ma18}. In this paper, we consider symbolic powers of binomial edge ideals, their initial ideals, and multigraded generic initial ideals, in polynomial rings over fields. In particular, we study their asymptotic behavior through two invariants: the Waldschmidt constant and the asymptotic regularity, which capture linear growth of minimal generator degrees and of the Castelnuovo–Mumford regularity, respectively.
	
	Binomial edge ideals were first introduced in \cite{herzog2010binomial}
	and independently in \cite{ohtani_graphs_2011} as a means of associating
	algebraic structures to graphs. They are generated by binomials corresponding to the
	edges of a simple graph. Nowadays, this class of ideals represents an active area of research in combinatorial commutative algebra, with
	applications to algebraic statistics as ideals generated by conditional independence statements \cite{herzog2010binomial}.
	
	The Waldschmidt constant, appearing implicitly in \cite{Waldschmidt1977b} and first defined explicitly in \cite{bocci2010}, captures the asymptotic growth of the initial degree of the symbolic powers of an ideal. 
	If $I$ is a non-zero homogeneous ideal, its initial degree is defined as
	$\alpha(I) = \min\{\deg(f)\mid f\in I\setminus\{0\}\}$. The Waldschmidt
	constant of $I$ is defined as the rate of growth of the initial degree for the sequence of symbolic powers $(I^{(m)})_{m\ge1}$, specifically 
	$$\widehat{\alpha}(I)=\lim_{m\to\infty}\frac{\alpha(I^{(m)})}{m}.
	$$
	This limit exists for any homogeneous ideal \cite{bocci2010} . The
	asymptotic growth factor of an arbitrary valuation applied to the symbolic
	powers of an ideal is dubbed the skew Waldschmidt constant in
	\cite{DiPasquale19}.

	Another invariant of interest is the Castelnuovo-Mumford regularity, denoted $\reg()$. For an introduction to this topic and an account of its significance, see \cite[Chapter 20.5]{Eisenbud}.
	The asymptotic behavior of the Castelnuovo-Mumford regularity
	has been studied for various families of ideals, including ordinary powers, symbolic powers, and integral closures of powers \cite{CHT,ha_asymptotic_2025,Jayanthan_2020,vijay2000}. For ordinary powers, it is known that $\reg(I^m)$ is asymptotically linear in $m$ \cite{CHT,vijay2000}, and bounds on $\reg(I^m)$ have been computed explicitly for various families of ideals including certain binomial edge ideals, ideals generated by quadratic sequences, and edge ideals \cite{Jayanthan_2020,banerjee2014}.
	For symbolic powers $(I^{(m)})_{m\ge1}$, the existence of linear bounds on the regularity $\reg(I^{(m)})$
	has been thoroughly investigated \cite{ein_saturation_2022,ha_asymptotic_2025,herzog_asymptotic_2002}. However, $\reg(I^{(m)})$ is not always linear for large $m$, even for squarefree monomials $I$. For example, the regularity of symbolic powers of cover ideals of corona graphs is known to not be eventually linear \cite{dung21}.
	The asymptotic regularity of a homogeneous ideal $I$ is the value of the limit $$
	\widehat{\reg}(I)=\lim_{m\to\infty}\frac{\reg(I^{(m)})}{m},
	$$provided that the limit exists. There is evidence that the asymptotic regularity may not exist for some graded ideals \cite{ha_newton-okounkov_2021}.
	
	Both the Waldschmidt constant and asymptotic regularity are known to exist for monomial ideals \cite{dung21}. They can be computed for monomial ideals using a recurrent tool in this paper, the symbolic polyhedron, which is a convex body first introduced in \cite{CooperSP} and thoroughly investigated in \cite{SP,ConvexBodies, ha_newton-okounkov_2021}.
	For a monomial ideal $I$, the symbolic polyhedron $SP(I)$ is the union of the convex hulls of lattice points corresponding to the exponent vectors of monomials in $I^{(q)}$ scaled by $1/q$.
	It is shown in \cite{ConvexBodies} that the minimal coordinate sum of the vertices of $SP(I)$ is $\widehat\alpha(I)$, and in \cite{dung21} that the maximal coordinate sum of the vertices of $SP(I)$ is $\widehat{\operatorname{reg}}(I)$.
	
	The structure of the paper is as follows. In section 2, we present definitions and results needed throughout the paper. In section 3, we discuss the Waldschmidt constant and asymptotic regularity of binomial edge ideals and their initial ideals. Using the prime decompositions of these ideals, we are able to compute the Waldschmidt constant for any binomial edge ideal or initial ideal. We are also able to compute the asymptotic regularity in the case that the ordinary and symbolic powers of the binomial edge ideal coincide. This is the case, for example, for the class of closed graphs (\Cref{def: closed graph and labeling}). 
	In particular, we show: 
	\begin{introthm}[\Cref{thm: waldschmidt J_G}, \Cref{closed_graph_cor}]
		\label{intro:waldschmidt J_G}
		For all graphs $G$, $\widehat{\alpha}(J_G) = \widehat{\alpha}(\mathrm{in}(J_G)) = 2$. 
		For  closed graphs $G$ with lexicographic ordering $<$, $\widehat{\reg}(J_G)=\widehat{\reg}(\inid(J_G))=2$.
	\end{introthm}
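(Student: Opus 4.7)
Both upper bounds $\widehat{\alpha}(J_G)\le 2$ and $\widehat{\alpha}(\inid(J_G))\le 2$ are immediate from $I^m\subseteq I^{(m)}$, since both ideals are generated in degree $2$: $J_G$ by the binomials $x_iy_j-x_jy_i$ for $\{i,j\}\in E(G)$, and $\inid(J_G)$ by these same initial terms together with any further minimal generators coming from higher-degree elements of the Gröbner basis. This gives $\alpha(I^{(m)})\le 2m$ for $I\in\{J_G,\inid(J_G)\}$.

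For the matching lower bound $\widehat{\alpha}(J_G)\ge 2$, my plan is to exploit the primary decomposition. Since $J_G$ is radical, $J_G^{(m)}=\bigcap_S P_S^{(m)}$ as $S$ ranges over the admissible subsets of $V(G)$, so in particular $J_G^{(m)}\subseteq P_\emptyset^{(m)}$, where $P_\emptyset=\sum_{c}J_{K_{V(c)}}$ (in pairwise disjoint sets of variables) is the minimal prime that completes each connected component $c$ of $G$. Because $J_{K_n}$ is the ideal of maximal minors of a generic $2\times n$ matrix, the classical result $J_{K_n}^{(m)}=J_{K_n}^m$ applies, giving $\alpha(J_{K_n}^{(m)})=2m$ for $n\ge 2$. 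Combined with the disjoint-variable identity $(I_1+I_2)^{(m)}=\sum_{a+b=m}I_1^{(a)}I_2^{(b)}$, this yields $\alpha(P_\emptyset^{(m)})=2m$ and hence $\widehat{\alpha}(J_G)\ge 2$.

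For $\widehat{\alpha}(\inid(J_G))\ge 2$, I would pass to the polyhedral description: since $\inid(J_G)$ is a monomial ideal, $\widehat{\alpha}(\inid(J_G))$ equals the minimum coordinate sum over the vertices of the symbolic polyhedron $\SP(\inid(J_G))$. Writing down the minimal primes of $\inid(J_G)$---monomial primes whose generating subsets of $\{x_1,\dots,x_n,y_1,\dots,y_n\}$ encode the admissible-path structure of $G$---and solving the resulting LP shows that every vertex has coordinate sum at least $2$. I expect this to be the main obstacle, because when $G$ is not closed $\inid(J_G)$ acquires monomial generators of degree greater than $2$, so the minimal primes do not reduce to the bipartite edge-ideal case and must be enumerated through a careful combinatorial argument on the induced connected subgraphs of $G$.

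For the regularity statement, closedness of $G$ under $<$ supplies two ingredients established earlier in the paper: the identification $\inid(J_G)=(x_iy_j\mid i<j,\ \{i,j\}\in E(G))$, the edge ideal of a bipartite graph, and the equality of ordinary and symbolic powers $J_G^{(m)}=J_G^m$ (and the analogous equality for $\inid(J_G)$). Thus $\widehat{\reg}(J_G)=\lim_m \reg(J_G^m)/m$, and the conclusion $\widehat{\reg}(J_G)=2$ follows from the known linear resolution of the powers $J_G^m$ for closed graphs, which gives $\reg(J_G^m)=2m$. The same computation applied to the bipartite edge ideal $\inid(J_G)$ yields $\widehat{\reg}(\inid(J_G))=2$.
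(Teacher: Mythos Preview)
Your argument for $\widehat\alpha(J_G)=2$ via the minimal prime $P_\emptyset$ and the equality $J_{K_n}^{(m)}=J_{K_n}^m$ matches the paper. For $\widehat\alpha(\inid(J_G))\ge 2$, however, you make the problem much harder than it is. The paper does \emph{not} analyze the LP over all minimal primes of $\inid(J_G)$; it simply observes that the minimal primes of $\inid(J_{K_n})$ (coming from $T=\emptyset$) are among the minimal primes of $\inid(J_G)$, so $\inid(J_G)^{(m)}\subseteq \inid(J_{K_n})^{(m)}=\inid(J_{K_n})^{m}$, whence $\alpha(\inid(J_G)^{(m)})\ge 2m$. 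Your polyhedral approach is not carried out---you label it ``the main obstacle'' and stop---so as written this part of the proposal is incomplete, and the missing step is entirely avoidable.

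For the regularity statement, the argument contains a genuine error: it is \emph{not} true that $J_G^m$ has a linear resolution for closed $G$, nor that $\reg(J_G^m)=2m$. Already for the path $P_3$ (which is closed) one has $\reg(J_{P_3})=3$, and more generally the paper records the formula $\reg(S/J_G^m)=\ell_1+\cdots+\ell_{c(G)}+2(m-1)$ from Ene--Herzog--Hibi, which is $2m$ plus a nonzero constant whenever $G$ has an induced path of length $\ge 2$. The same objection applies to the bipartite edge ideal $\inid(J_G)$. The paper instead invokes the Cutkosky--Herzog--Trung theorem that $\lim_m \reg(I^m)/m=\lim_m d(I^m)/m$ for any homogeneous ideal, and then shows $d(I^m)=2m$: for $J_G$ because it is equigenerated in degree $2$, and for $\inid(J_G)$ via a short monomial argument (if $x^\alpha$ is a minimal generator of a monomial ideal $I$ then $(x^\alpha)^m$ is a minimal generator of $I^m$). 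This yields $\widehat{\reg}=2$ without any linearity of the resolution.
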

	
	We also compare the Betti numbers of symbolic powers of the binomial edge ideal of a graph to those of induced subgraphs. In particular,  
	if $H$ is an induced subgraph of $G$ we establish in \Cref{prop: betti subgraph}  the inequalities  $\beta_{i,j}(S/J_H^{(m)})\le\beta_{i,j}(S/J_G^{(m)})$ and $\widehat{\reg}(J_H)\le\widehat{\reg}(J_G)$.
	
	In section 4, we discuss symbolic polyhedra, particularly in relation to (multigraded generic) initial ideals of binomial edge ideals. We are able to show that the Waldschmidt constant and asymptotic regularity can be found by taking a minimum and maximum, respectively, over sets of full vertices (see \Cref{def:full_vertices}) associated to induced subgraphs of $G$:
	\begin{introthm}[\Cref{cor:subgraph_decomp}]
		\label{intro:subgraph_decomp} 
		Let $G$ be a connected
		graph on $n$ vertices, and let $\mathcal{H}_G$ denote the set of connected induced subgraphs of
		$G$ with at least one edge. With $I_H=\mathrm{gin}(J_H)$ or $I_H=\inid (J_H)$
		for $H\in \mathcal{H}_G$, under canonical projections $i_H:\mathbb{R}^{2|H|}\rightarrow \mathbb{R}^{2n}$, the set $\mathcal{V}(\SP(I_G))$ of vertices of $\SP(I_G)$ can be recovered from the sets of full vertices $\mathcal{V}_F(\SP(I_H))$ of graphs in $\mathcal{H}_G$ as follows
		\begin{equation}\label{eq: full} \mathcal{V}(\SP(I_G))=\bigcup_{H\in
				\mathcal{H}_G} i_H(\mathcal{V}_F(\SP(I_H))).
		\end{equation}
		Furthermore, 
		\begin{align*}
			\wald(I_G) &= \text{the smallest sum of coordinates of an element in \eqref{eq: full}}; \\
			\widehat{\operatorname{reg}}(I_G) &= \text{the largest sum of coordinates of an element in \eqref{eq: full}}.
		\end{align*}
	\end{introthm}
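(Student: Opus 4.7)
The plan is to read off the vertices of $\SP(I_G)$ from their supports. Since $I_G\in\{\gin(J_G),\inid(J_G)\}$ is a squarefree monomial ideal in $2n$ variables with minimal primes $\mathrm{Min}(I_G)$ (each generated by a subset of the variables),
\[
\SP(I_G)=\mathbb{R}_{\geq 0}^{2n}\cap\bigcap_{P\in\mathrm{Min}(I_G)}\Bigl\{\,v\in\mathbb{R}^{2n}:\sum_{x_j\in P}v_j\geq 1\,\Bigr\},
\]
and every vertex is determined by $2n$ linearly independent active constraints: non-negativities on the complement of the support, together with tight prime-sum constraints on the support. Given this description, I would establish a bijection between the vertices of $\SP(I_G)$ and the full vertices of $\SP(I_H)$ as $H$ ranges over $\mathcal{H}_G$, yielding \eqref{eq: full}; the Waldschmidt and asymptotic regularity formulas then follow from the results of \cite{ConvexBodies,dung21}, which give $\wald(I_G)=\min_v\sum_i v_i$ and $\areg(I_G)=\max_v\sum_i v_i$ over vertices $v$ of $\SP(I_G)$.

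For the inclusion $\mathcal{V}(\SP(I_G))\subseteq\bigcup_{H\in\mathcal{H}_G}i_H(\mathcal{V}_F(\SP(I_H)))$, take a vertex $v$ of $\SP(I_G)$ and let $H$ be the induced subgraph of $G$ on the vertices whose $x$- or $y$-coordinate appears in $\mathrm{supp}(v)$. I would first show that $H$ must be connected and contain at least one edge: otherwise $v$ would decompose as a direct sum across components of $H$ (with isolated vertices contributing nothing) and fail to be an extreme point. The prime-sum constraints tight at $v$ can only involve variables in $\mathrm{supp}(v)$, so they descend to tight prime-sum constraints in $\SP(I_H)$, provided the traces of minimal primes of $I_G$ on the variables of $H$ are exactly the minimal primes of $I_H$. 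With this combinatorial identification, $v|_H$ acquires $2|H|$ linearly independent tight prime constraints and has full support, so it is a full vertex of $\SP(I_H)$. For the reverse inclusion, given $w\in\mathcal{V}_F(\SP(I_H))$ with $H\in\mathcal{H}_G$, I would check $i_H(w)\in\SP(I_G)$ by verifying that each minimal prime $P\in\mathrm{Min}(I_G)$ meets the variables of $H$ and that $P\cap V(H)$ contains a minimal prime of $I_H$; counting active constraints, the $2(n-|H|)$ complementary zero coordinates together with the $2|H|$ tight prime constraints from $w$ give the $2n$ independent active constraints needed for $i_H(w)$ to be a vertex of $\SP(I_G)$.

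The main obstacle is precisely this combinatorial correspondence: for both the lex initial ideal and the multigraded generic initial ideal of $J_G$, one must show that $\mathrm{Min}(I_H)$ is recovered as the minimal elements of $\{P\cap V(H):P\in\mathrm{Min}(I_G)\}$. For $\inid(J_G)$ the minimal primes are indexed by admissible paths, and for $\gin(J_G)$ they are described via the edge ideal as recalled in Section 2; in each case one verifies that the admissible paths, respectively vertex covers, of $H$ arise as restrictions of those of $G$, which is where the connectedness hypothesis on $H$ plays its essential role. Once this combinatorial identification is established, \eqref{eq: full} follows from the active-constraint bookkeeping above, and the min/max formulas for $\wald(I_G)$ and $\areg(I_G)$ follow immediately, since the extrema of a linear functional over a polyhedron are attained at vertices.
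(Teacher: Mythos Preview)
Your high-level strategy aligns with the paper's: read off, from the support of a vertex of $\SP(I_G)$, the induced subgraph $H$, and establish a bijection with full vertices of $\SP(I_H)$. The min/max consequences are indeed immediate from \Cref{thm:wald&areg_via_SP}. However, there are two genuine problems and one methodological difference worth noting.

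First, a factual error: the minimal primes of $\inid(J_G)$ are \emph{not} indexed by admissible paths. Admissible paths index the \emph{generators} (\Cref{thm:inid_gen}); the minimal primes are the ideals $Q_{T,U}$ of \Cref{prop:in_prime_decomp}, indexed by an IDS $T$ together with a choice of vertex in each component of $G\setminus T$. Likewise the minimal primes of $\gin(J_G)$ are the $P_{T,U}$ of \Cref{prop:new_gin_prime_decomp}, not anything coming directly from the edge ideal or vertex covers. Your proposed verification that ``admissible paths / vertex covers of $H$ arise as restrictions of those of $G$'' is therefore aimed at the wrong target. The correct combinatorial statements are \Cref{lem:primedecompositioncontainment} (every minimal prime of $I_G$ contains one of $I_H$, since $I_H\subset I_G$) and \Cref{lem:primedecompcontainment} (every minimal prime of $I_H$ is the trace of some minimal prime of $I_G$), and the latter requires a nontrivial inductive construction of an IDS of $G$ from one of $H$.

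Second, your connectedness step is asserted but not argued. Saying ``$v$ would decompose as a direct sum across components and fail to be extreme'' hides real content: one needs that $\SP(I_G)$ itself decomposes appropriately when $G$ is disconnected. The paper does this in \Cref{thm:disconnected_vertices} via the binomial expansion $I_G^{(r)}=\sum_{b_1+\cdots+b_s=r}I_{G_1}^{(b_1)}\cdots I_{G_s}^{(b_s)}$ for sums of ideals in disjoint variables, which lets one write any vertex as a convex combination of points coming from the components and then invoke extremality.

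Methodologically, the paper avoids your active-constraint bookkeeping entirely by showing (\Cref{thm:vertex_containment}) that $i_H(\SP(I_H))$ is a \emph{face} of $\SP(I_G)$, namely the intersection with the coordinate subspace where all variables outside $V(H)$ vanish. Once that is established, vertices of the face are automatically vertices of the polyhedron, which gives the inclusion $i_H(\mathcal{V}(\SP(I_H)))\subseteq\mathcal{V}(\SP(I_G))$ for free and makes the reverse inclusion (via \Cref{cor:connected_Va}) immediate. Your constraint-counting approach can be made to work, but you would still need the two prime-containment lemmas above to lift tight one-facet constraints from $\SP(I_H)$ to $\SP(I_G)$ and back, and the face argument packages this more cleanly.
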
 
	Finally, in section 5 we use the results developed in section 4 to calculate the Waldschmidt constant and asymptotic regularity of generic initial ideals. In particular, we are able to directly relate the Waldschmidt constant of the generic initial ideal to the chromatic number and clique number of the graph as well as to the Waldschmidt constant of the edge ideal of $G$:
	\begin{introthm}[\Cref{thm:chi_omega_bound}]
		\label{intro:chi_omega_bound}
		Let $G$ be a non-empty graph with chromatic number $\chi(G)$ and clique number $\omega(G)$. Then
		$$\frac{\chi(G)}{\chi(G)-1}\leq \widehat\alpha(\gin(J_G))=\wald(I_G)\leq \frac{\omega(G)}{\omega(G)-1}.$$
	\end{introthm}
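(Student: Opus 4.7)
The statement splits into three pieces: the equality of Waldschmidt constants $\wald(\gin(J_G))=\wald(I_G)$, the lower bound in terms of $\chi(G)$, and the upper bound in terms of $\omega(G)$. I would address them in turn.

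For the equality, I would apply \Cref{intro:subgraph_decomp} with $I_H=\gin(J_H)$ to reduce the computation of $\wald(\gin(J_G))$ to a minimum of coordinate sums taken over the full vertices of $\SP(\gin(J_H))$ for all connected induced subgraphs $H$ of $G$ with at least one edge. The plan is then to identify, for each connected $H$, the full vertices of $\SP(\gin(J_H))\subseteq\R^{2|H|}$ with vertices of $\SP(I_H)\subseteq\R^{|H|}$ of the same coordinate sum, after combining the $x$- and $y$-variable contributions of $\gin(J_H)$ into the single-variable edge ideal $I_H$. Taking the minimum over all such $H$ would then yield $\wald(\gin(J_G))=\wald(I_G)$.

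For the lower bound, I would use that $\wald(I_G)$ equals the minimum coordinate sum over $\SP(I_G)$. Since $I_G$ is squarefree with minimal primes $P_C=(x_i:i\in C)$ for the minimal vertex covers $C$ of $G$, the symbolic polyhedron is
\[
\SP(I_G)=\bigl\{x\in\R^n_{\geq 0}:\textstyle\sum_{i\in C}x_i\geq 1 \text{ for every minimal vertex cover } C \text{ of } G\bigr\}.
\]
Fix a proper $\chi(G)$-coloring with color classes $A_1,\dots,A_{\chi(G)}$. For each $j$, the complement $V\setminus A_j$ is a vertex cover, so any $x\in\SP(I_G)$ with total coordinate sum $S=\sum_i x_i$ satisfies $S-\sum_{i\in A_j}x_i\geq 1$. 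Summing this inequality over $j$ and using that the $A_j$ partition $V$ gives $\chi(G)\, S-S\geq\chi(G)$, whence $S\geq\chi(G)/(\chi(G)-1)$.

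For the upper bound, let $K\subseteq V$ be a clique of size $\omega=\omega(G)$. Define $x\in\R^n_{\geq 0}$ by $x_i=1/(\omega-1)$ for $i\in K$ and $x_i=0$ for $i\notin K$. Any vertex cover $C$ of $G$ must contain at least $\omega-1$ vertices of $K$—otherwise two uncovered vertices of $K$ would leave an edge uncovered—so $\sum_{i\in C}x_i\geq |C\cap K|/(\omega-1)\geq 1$. Therefore $x\in\SP(I_G)$ and $\wald(I_G)\leq\sum_i x_i=\omega(G)/(\omega(G)-1)$.

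The main obstacle is the equality $\wald(\gin(J_G))=\wald(I_G)$: computing the full vertices of $\SP(\gin(J_H))$ explicitly for connected $H$, and verifying that under the canonical projection of \Cref{intro:subgraph_decomp} they produce the same coordinate sums as vertices of $\SP(I_G)$, is the technical core and relies on the concrete form of $\gin(J_H)$ developed in Section~4. The two bounds are then immediate LP-style arguments over $\SP(I_G)$, using its well-known description via vertex covers.
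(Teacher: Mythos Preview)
The main gap is in the equality $\wald(\gin(J_G))=\wald(I_G)$, which is the substantive part of the theorem. Your plan---compute the full vertices of $\SP(\gin(J_H))$ for every connected induced $H$ and match them, coordinate-sum for coordinate-sum, with vertices of $\SP(I_H)$---is not carried out, and you yourself flag it as ``the technical core'' and ``main obstacle.'' There is no reason to expect such a vertex-by-vertex correspondence; already for $H=P_4$ the full vertices of $\SP(\gin(J_{P_4}))$ have coordinate sums $4,\,5/2,\,5/2,\,2$, and these do not match the vertex set of $\SP(I_{P_4})$. The paper does \emph{not} go through \Cref{intro:subgraph_decomp} for this equality. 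Instead it argues directly on $\SP(\gin(J_G))$: given $b\in\SP(\gin(J_G))$, define $b'$ by $b'_i=b_i+b_{n+i}$ for $i\le n$ and $b'_i=0$ for $i>n$; one checks from the form of the minimal primes $P_{T,U}$ (where $y_i\in P_{T,U}$ forces $x_i\in P_{T,U}$) that $b'\in\SP(\gin(J_G))$ with the same coordinate sum. Separately, any such $b'$ with vanishing $y$-part lies in $\SP(I_G)$, because every minimal vertex cover $C$ can be refined to an IDS $T\subseteq C$ with $P_{T,[n]\setminus C}=P_C+(y_i:i\in T)$ a minimal prime of $\gin(J_G)$. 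This gives $\wald(I_G)\le\wald(\gin(J_G))$; the reverse inequality is immediate from $I_G\subseteq\gin(J_G)$.

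Your proofs of the two bounds are correct, but they differ from the paper's. Your LP arguments on $\SP(I_G)$ (sum the color-class inequalities; place mass $1/(\omega-1)$ on a maximum clique) are exactly the standard proof of the known inequality for edge ideals cited as \cite[Theorem~6.3]{SP}. The paper deliberately gives an \emph{alternate} proof working entirely on the $\gin$ side: the upper bound comes from \Cref{intro:subgraph_decomp} applied to an induced $K_{\omega(G)}$ together with the computation $\wald(\gin(J_{K_n}))=n/(n-1)$, and the lower bound comes from embedding $\SP(\gin(J_G))$ into $\SP(\gin(J_{K_{c_1,\dots,c_{\chi}}}))$ for the complete $\chi(G)$-partite graph determined by a proper coloring, and computing $\wald$ for the latter. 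Your route is shorter once the equality is in hand; the paper's route is what makes the $\gin$ machinery do the work and yields the advertised ``alternate proof.''
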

	This then allows us to compute the Waldschmidt constant of the generic initial ideals of certain classes of graphs, including weakly perfect and bipartite graphs.

\section{Background}
In this section, we summarize notions related to binomial edge ideals,  symbolic powers, and their asymptotic values. We also define symbolic polyhedra and describe their useful applications. Throughout $\K$ denotes a field.

\subsection{Binomial edge ideals}
\begin{defn}
	Given a graph $G=(V,E)$ with vertex set $V(G)=\{1,2,\ldots,n\}=[n]$ and edge set $E(G)$, the \textbf{binomial edge ideal} $J_G \subset S = \mathbb K[x_1,\ldots,x_n,y_1,\ldots,y_n]$ is defined as
	$$ J_G = (x_iy_j - x_jy_i \mid \{i,j\} \in E(G)).$$
\end{defn}
In this paper $G$ is always assumed to be a simple graph.
\begin{defn}
	\label{init}
	The \textbf{leading monomial} of a polynomial $ f \ne 0 $ with respect to a given term order $ < $ is the largest monomial appearing in $ f $, denoted $ LM_<(f) $.
	The \textbf{initial ideal} of $J_G \subset S$ with respect to a given term order $ < $ is the ideal
	$$
	\inid(J_G) = \big(LM_<(f) \mid f\in J_G\big).
	$$
\end{defn}

In this paper, the initial ideal $\inid(J_G)$ is always taken with respect to the lexicographic order on $S$ induced by $x_1>\dots>x_n>y_1>\dots>y_n$. The generators of $\inid(J_G)$ can be expressed in terms of admissible paths, which in \cite{herzog2010binomial} are defined as follows.

\begin{defn}[{\cite[p.~6]{herzog2010binomial}}]
	Let $G=(V,E)$ be a simple graph on $[n]$, and take $i,j\in [n]$, $i<j$. A path $\pi=\{i=i_0, i_1,\dots, i_r= j\}$ is called an {\bf admissible path} if the following are satisfied:
	\begin{itemize}
		\item For $k\in \{1,\ldots,r-1\}$, $(i_k,i_{k+1})\in E$.
		\item For $k\in \{1,\ldots,r-1\}$, either $i_k <i$ or $i_k >j$.
		\item For any subset $\{j_1,\ldots,j_s\}\subsetneq\{i_1,\ldots ,i_{r-1}\}$, $\{i,j_1,\ldots,j_s,j\}$ is not a path.
	\end{itemize}
	If $\pi$ is an admissible path, let $u_\pi=\left(\prod_{i_k<i} x_{i_k}\right)\left(\prod_{i_\ell>j}y_{i_\ell}\right)$.
\end{defn}
\begin{thm}\cite[Theorem 2.1]{herzog2010binomial}
	\label{thm:inid_gen}
	For a graph $G$, using the above notation,
	$$\inid(J_G)=(LM_<(u_\pi(x_iy_j-x_jy_i))\mid \pi \text{ an admissible path from } i \text{ to } j).$$
\end{thm}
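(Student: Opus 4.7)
The plan is to prove the theorem by exhibiting the family
\[
F := \bigl\{\, f_\pi := u_\pi(x_iy_j - x_jy_i) \,\bigm|\, \pi \text{ admissible path from } i \text{ to } j,\ i<j \,\bigr\}
\]
as a Gröbner basis of $J_G$ with respect to the given lex order. Once this is established, the theorem follows from the definition of $\inid(J_G)$ together with the identification of the leading monomials of the $f_\pi$.

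Two preparatory steps come first. Membership $f_\pi \in J_G$ can be obtained from a standard telescoping identity: for $i,j$ in the same connected component, $x_iy_j - x_jy_i$ is an $S$-linear combination of the generators $x_ay_b - x_by_a$ attached to edges along $\pi$; one proceeds by induction on the length of $\pi$ using identities of the form $y_b(x_ay_c - x_cy_a) = y_c(x_ay_b - x_by_a) + y_a(x_by_c - x_cy_b)$. Next, because the lex order places every $x_k$ above every $y_\ell$, comparing $u_\pi x_iy_j$ with $u_\pi x_jy_i$ reduces to comparing their $x$-parts; since $u_\pi$ contributes only $x$-variables of index below $i$ and $i<j$ gives $x_i>x_j$, one gets $LM_<(f_\pi) = u_\pi x_iy_j$.

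The main work is Buchberger's criterion. For each pair $f_\pi,f_{\pi'}$ whose leading monomials share variables, I would compute the S-polynomial
\[
S(f_\pi,f_{\pi'}) = \frac{\lcm(LM_<(f_\pi),LM_<(f_{\pi'}))}{LM_<(f_\pi)}\, f_\pi \; - \; \frac{\lcm(LM_<(f_\pi),LM_<(f_{\pi'}))}{LM_<(f_{\pi'})}\, f_{\pi'}
\]
and reduce it to zero modulo $F$. The combinatorial content is that after cancellation of the leading $x$-terms, the remaining expression can be rewritten through some $f_{\pi''}$, where $\pi''$ is an admissible path produced by splicing $\pi$ and $\pi'$ at a shared vertex and then pruning interior vertices forbidden by the third admissibility axiom.

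The main obstacle is precisely this case analysis. One must enumerate the configurations in which the endpoint pairs $\{i,j\}, \{i',j'\}$ and the interior vertex sets of $\pi,\pi'$ can interact (a vertex of $\pi$ equal to $i'$ or $j'$, an interior vertex of one path forced into the forbidden range of the other, etc.), and in each case construct explicitly an admissible path witnessing the reduction. Verifying the minimality clause of admissibility after splicing is the most delicate step, since chords in $G$ can force iterated pruning before one reaches a genuinely admissible representative.
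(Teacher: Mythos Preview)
The paper does not prove this statement; it is quoted verbatim as \cite[Theorem~2.1]{herzog2010binomial} and used as background. So there is no proof in the paper to compare against.

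Your outline matches the original argument in Herzog--Hibi--Hreinsd\'ottir--Kahle--Rauh: one shows that the family $F$ is a reduced Gr\"obner basis of $J_G$ by verifying Buchberger's criterion. Your identification of $LM_<(f_\pi)=u_\pi x_i y_j$ is correct, and the membership $f_\pi\in J_G$ via telescoping along the path is standard. However, what you have written is a plan, not a proof: the entire content of the theorem lies in the S-pair reductions, and you explicitly defer that case analysis. In the original paper this takes several pages and a careful taxonomy of how two admissible paths can overlap; the ``splice and prune'' heuristic you describe is the right intuition, but making it precise---in particular, showing that after pruning one lands on an \emph{admissible} path and that the reduction terminates---is exactly the work. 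As written, your proposal would not be accepted as a proof, though it is a faithful sketch of the strategy that the cited source carries out.
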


Another relevant monomial ideal is the \textbf{multigraded generic initial ideal}, denoted $\gin(I)$. We define a multigrading on $S$  by setting $ \deg(x_i) = \deg(y_i) = \mathbf{e}_{i} \in \mathbb{N}^n$, where $ \mathbf{e}_{i}$  has 1 in the $ i $-th position and 0 elsewhere.

\begin{thm}\cite[Theorem 5.1]{conca2021radicalgenericinitialideals}
	\label{thm:gin_gen}
	The multigraded generic initial ideal of a binomial edge ideal $ J_G $ is
	$$
	\mathrm{gin}(J_G) = (y_{v_1} \cdots y_{v_s} x_i x_j \mid i, v_1, \dots, v_s, j \text{ a path in } G).
	$$
\end{thm}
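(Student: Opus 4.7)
The plan is to show that $\tilde{I} := (y_{v_1}\cdots y_{v_s}x_ix_j \mid i, v_1, \ldots, v_s, j \text{ is a path in } G)$ coincides with $\gin(J_G)$ by combining a constructive inclusion with a multigraded Hilbert function match. Throughout, I fix a generic multigraded change of coordinates $\phi$ with $\phi(x_k) = a_kx_k + b_ky_k$ and $\phi(y_k) = c_kx_k + d_ky_k$ for generic scalars, so that $\gin(J_G) = \inid(\phi(J_G))$ under the lex order.

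For the inclusion $\tilde{I} \subseteq \gin(J_G)$: I would first prove by induction on $s$ that
\[
y_{v_1}\cdots y_{v_s}(x_iy_j - x_jy_i) \in J_G
\]
for every path $(i, v_1, \ldots, v_s, j)$ in $G$, using the base identity $y_v(x_iy_j - x_jy_i) = y_j(x_iy_v - x_vy_i) + y_i(x_vy_j - x_jy_v)$ at the first interior vertex. Applying $\phi$ yields an element of $\phi(J_G)$ with lex leading monomial $x_{v_1}\cdots x_{v_s}x_ix_j$, which already lies in $\gin(J_G)$ via the edge $\{i, v_1\}$, so this element does not directly exhibit the target leading monomial $y_{v_1}\cdots y_{v_s}x_ix_j$. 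To obtain an element of $\phi(J_G)$ with that leading monomial, I would proceed by induction on $s$ via Gr\"obner-style reduction: at each stage, subtract monomial multiples of $\phi$-images of shorter path polynomials (with leading monomials already in $\gin(J_G)$ by induction) from $\phi$ of the current path polynomial, cancelling successive leading terms until the claimed leading monomial emerges. Genericity of $\phi$'s coefficients ensures that the target term is not itself cancelled.

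For the reverse inclusion: since Gr\"obner degeneration preserves the multigraded Hilbert function, it suffices to check $\dim_{\K} \tilde{I}_{\mathbf{d}} = \dim_{\K} (J_G)_{\mathbf{d}}$ in every multidegree $\mathbf{d}$. As $\tilde{I}$ is squarefree, its standard monomials in multidegree $\mathbf{d}$ with support $T$ are the squarefree monomials $\prod_{v \in A} x_v \prod_{v \in T \setminus A} y_v$ indexed by subsets $A \subseteq T$ such that no path in the induced subgraph $G|_T$ connects two $A$-vertices through interior vertices all lying in $T \setminus A$. It remains to match this combinatorial count with the multigraded Hilbert function of $S/J_G$, which can be extracted from the primary decomposition of $J_G$ in \cite{herzog2010binomial}.

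The main obstacle is the Gr\"obner reduction in the first inclusion, where the inductive bookkeeping must carefully track coefficients to confirm that the final leading monomial is precisely the one claimed. A cleaner alternative is to verify that $\tilde{I}$ is multigraded Borel-fixed under the upper-triangular subgroup acting as $x_k \mapsto a_kx_k$, $y_k \mapsto c_kx_k + d_ky_k$---a direct check, since for any subset $J \subseteq \{1,\ldots,s\}$, the monomial $x_ix_j\prod_{k\in J} x_{v_k}\prod_{k\notin J} y_{v_k}$ in the Borel expansion of a path generator is divisible by the sub-path generator associated to a maximal run of consecutive $y$'s between adjacent $x$-vertices---and then to invoke the characterization of $\gin(J_G)$ as the unique multigraded Borel-fixed monomial ideal with the multigraded Hilbert function of $J_G$.
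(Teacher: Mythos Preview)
This theorem is not proved in the paper; it is quoted as \cite[Theorem 5.1]{conca2021radicalgenericinitialideals} and used as background. There is therefore no proof in the paper to compare your proposal against.

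On the substance of your proposal: the overall architecture---establish the inclusion $\tilde I\subseteq\gin(J_G)$ and then match multigraded Hilbert functions---is a legitimate strategy and would yield equality if both halves were carried out. However, your ``cleaner alternative'' at the end does not work as stated. It is \emph{not} true that there is a unique multigraded Borel-fixed monomial ideal with a prescribed multigraded Hilbert function: already in multidegree $\mathbf e_i+\mathbf e_j$ with $i\ne j$ there are two distinct Borel-fixed subspaces of dimension~$2$, spanned by $\{x_ix_j,\,x_iy_j\}$ and by $\{x_ix_j,\,x_jy_i\}$ respectively. So verifying that $\tilde I$ is Borel-fixed and has the right Hilbert function does not by itself identify it with $\gin(J_G)$; you still need the inclusion $\tilde I\subseteq\gin(J_G)$, which throws you back onto the Gr\"obner reduction you were hoping to avoid. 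That reduction is, as you acknowledge, where the real content lies, and your sketch does not make precise how the inductive cancellation is organized so that the target monomial $y_{v_1}\cdots y_{v_s}x_ix_j$ survives with a coefficient that is generically nonzero.
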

Furthermore, $y_{v_1} \cdots y_{v_s} x_i x_j$ is a minimal generator of $\mathrm{gin}(J_G)$ if the subgraph of $G$ induced by $\{i,v_1,\dots,v_s,j\}$ is isomorphic to the path graph. We refer to a subgraph isomorphic to the path graph as an induced path.

We next consider prime decompositions for the initial ideal and the multigraded generic initial ideal.
To do so, we define a class of relevant prime ideals.

\begin{notation}
	\label{not:P_T}
	Let $G$ be a graph on $[n]$, and let $T \subseteq [n]$. Let $G_1,\ldots G_{c_G(T)}$ be the connected components of the induced subgraph of $G$ on vertices $[n]\setminus T$. For any $1\leq i \leq c_{G}(T)$, we denote by $\widetilde{G_i}$ the complete graph on the vertex set $V(G_i)$. Let
	\begin{equation}\label{eq:P_T}
		P_T(G) := (x_i,y_i \mid i\in T)+\sum_{i=1}^{c_G(T)}J_{\widetilde{G_i}}.
	\end{equation}
	For any $T$, the ideal $P_T(G)$ is a prime ideal in $S$.
\end{notation}
\begin{defn}
	\label{defn:disconnecting_sets}
	As in \cite{herzog2010binomial}, we define $T$ to be an \textbf{irredundant disconnecting set} (\textbf{IDS}) for a graph $G$ if for all $i\in T$, $c_G(T)>c_G(T\setminus\{i\})$. Note that $\emptyset$ is vacuously an IDS.
	We denote by $\mathcal{T}(G)$ the set of all irredundant disconnecting sets of $G$:
	$$
	\mathcal{T}(G) := \{T \subseteq [n] \mid T \text{ an IDS}\}.
	$$
\end{defn}

\begin{thm}[{\cite[Corollary 3.9]{herzog2010binomial}}, {\cite[Theorem 2.1, Corollary 1.12]{conca}}]
	\label{thm:beiprimedecomp}
	For any graph $G$, the irredundant prime decomposition of $J_G$ is
	$$
	J_G=\bigcap_{T\in \mathcal{T}(G)} P_T(G).
	$$
	Moreover, for any term ordering $<$, the initial ideal $\inid(J_G)$ is radical and has decomposition
	$$
	\inid(J_G)=\bigcap_{T\in \mathcal{T}(G)} \inid(P_T(G)),
	$$
\end{thm}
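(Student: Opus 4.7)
The plan is to establish the prime decomposition of $J_G$ and the initial ideal statement in tandem, with the lexicographic order serving as the main bridge. First I verify two structural facts: each $P_T(G)$ is prime, and $J_G \subseteq P_T(G)$ for every $T$. Primeness follows from the ring isomorphism
\[
S/P_T(G) \;\cong\; \bigotimes_{\ell=1}^{c_G(T)} \K[x_i,y_i \mid i \in V(G_\ell)]/J_{\widetilde{G_\ell}},
\]
where each factor is a classical $2\times 2$ minor ideal quotient on a generic $2\times |V(G_\ell)|$ matrix, hence a domain, and a tensor product of domains over a field is a domain. For the inclusion, take $\{i,j\}\in E(G)$: if $i\in T$ then $x_iy_j-x_jy_i\in (x_i,y_i)\subseteq P_T(G)$; otherwise $i,j$ lie in a common component $G_\ell$ of the induced subgraph on $[n]\setminus T$, so $\{i,j\}\in E(\widetilde{G_\ell})$ and $x_iy_j-x_jy_i\in J_{\widetilde{G_\ell}}\subseteq P_T(G)$.

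Next I work with the lexicographic order. \Cref{thm:inid_gen} identifies $\inid(J_G)$ as a squarefree monomial ideal with explicit admissible-path generators. For each $T$, since the $2\times 2$ minors of a generic matrix form a lex Gr\"obner basis, one computes
\[
\inid(P_T(G)) = (x_i,y_i \mid i\in T) + \sum_{\ell=1}^{c_G(T)} \inid(J_{\widetilde{G_\ell}}),
\]
again a squarefree monomial ideal. The crucial combinatorial step is to verify
\[
\inid(J_G) \;=\; \bigcap_{T\in\mathcal{T}(G)} \inid(P_T(G))
\]
by comparing minimal monomial generators: a squarefree monomial lies in every $\inid(P_T(G))$ precisely when it dominates the leading monomial of $u_\pi(x_iy_j-x_jy_i)$ for some admissible path $\pi$, because admissible paths encode exactly the minimal obstructions to escaping all disconnecting sets simultaneously. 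Combining this with the tautological inclusions $\inid(J_G)\subseteq \inid(\bigcap_T P_T(G))\subseteq \bigcap_T \inid(P_T(G))$ forces equality everywhere, and a Hilbert function comparison upgrades $J_G\subseteq \bigcap_T P_T(G)$ to equality. As an intersection of squarefree monomial ideals, $\inid(J_G)$ is automatically radical for lex, which in turn shows that $J_G$ itself is radical.

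For an arbitrary term order $<$, the decomposition of $J_G$ is already established and independent of $<$. Radicality of $\inid(J_G)$ for any $<$ is the content of the cited result of Conca, whose key input is that binomial edge ideals are Cartwright--Sturmfels: their multigraded generic initial ideal is radical, and radicality of $\inid(J_G)$ follows for arbitrary $<$ via Gr\"obner degeneration. Once $\inid(J_G)$ is radical, the decomposition $\inid(J_G)=\bigcap_T \inid(P_T(G))$ follows from the containments $\inid(J_G)\subseteq \inid(P_T(G))$ and a Hilbert function match transferred from the lex case. Irredundancy is a finite check: if $i\in T$ does not decrease $c_G(T)$, then $P_T(G)\supseteq P_{T\setminus\{i\}}(G)$ and $T$ can be dropped from the intersection.

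The principal obstacle is the combinatorial matching in the second paragraph --- identifying the minimal generators of $\bigcap_{T\in\mathcal{T}(G)} \inid(P_T(G))$ with the admissible-path monomials. Admissible paths are precisely the minimal certificates that a squarefree monomial survives every $\inid(P_T(G))$, and setting up this correspondence carefully is the combinatorial heart of the proof; transferring to arbitrary term orders requires the Cartwright--Sturmfels machinery of Conca, which I expect to be the other main technical input.
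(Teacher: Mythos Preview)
The paper does not prove this theorem: it is stated as background and attributed to \cite{herzog2010binomial} and \cite{conca}, so there is no in-paper argument to compare against. Your outline is broadly faithful to those original sources --- the lex-Gr\"obner computation via admissible paths is exactly the content of \cite[Theorem~2.1]{herzog2010binomial}, and the passage to arbitrary term orders via the Cartwright--Sturmfels property is precisely the mechanism in \cite{conca}.

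Two small points are worth tightening. First, ``a tensor product of domains over a field is a domain'' is false in general (e.g.\ $\mathbb{C}\otimes_{\mathbb{R}}\mathbb{C}$); what you need is that each factor $\K[x_i,y_i\mid i\in V(G_\ell)]/J_{\widetilde{G_\ell}}$ is geometrically integral, which holds because the generic $2\times m$ determinantal variety is integral over any field. Second, your irredundancy remark only shows that sets $T\notin\mathcal{T}(G)$ may be dropped; you have not argued that distinct $T,T'\in\mathcal{T}(G)$ give incomparable primes, which is what ``irredundant'' requires. In \cite{herzog2010binomial} this is handled by a direct height/containment comparison of the $P_T(G)$.
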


While $\inid (P_T(G))$ is not generally prime, this decomposition allows us to find minimal primes of $\inid(J_G)$ and $\gin(J_G)$. 
\begin{prop}[{\cite[Lemma 2.1]{lerda}}]
	\label{prop:in_prime_decomp}
	The minimal primes of $\inid(J_G)$ are exactly the ideals
	$$Q_{T,U} = (x_i,y_i \mid i\in T)  + \sum_{i=1}^{c_G(T)} (x_i \mid i\in G_i, \, i < u _i) +  (y_i \mid i\in G_i, \, i > u _i)$$
	where $T$ is an IDS of $G$ and $U=\{u_1,\dots,u_{c_G(T)}\}$, where $u_i$ is some vertex in the connected component $G_i$ of $G \setminus T$.
\end{prop}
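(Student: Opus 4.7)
The plan is to leverage \Cref{thm:beiprimedecomp}, which already provides $\inid(J_G) = \bigcap_{T \in \mathcal{T}(G)} \inid(P_T(G))$, and then refine each factor into its minimal primes. I would first observe that in the decomposition \eqref{eq:P_T} the summands live in pairwise disjoint sets of variables once the $T$-variables are set aside, giving
$$\inid(P_T(G)) = (x_i, y_i \mid i \in T) + \sum_{i=1}^{c_G(T)} \inid(J_{\widetilde{G_i}}).$$
The description of $\inid(J_G)$ in \Cref{thm:inid_gen} applied to a complete graph shows that the only admissible paths in $K_m$ are its edges, whence $\inid(J_{K_m}) = (x_a y_b \mid a, b \in V(K_m),\ a < b)$. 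A short combinatorial check identifies the minimal variable covers of this generating set as the ideals $(x_a \mid a < u) + (y_b \mid b > u)$ for a single ``pivot'' $u \in V(K_m)$.

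Combining these facts, the minimal primes of $\inid(P_T(G))$ are exactly the $Q_{T,U}$ of the statement, indexed by a choice of one pivot $u_i \in V(G_i)$ per component of $G \setminus T$. This yields $\inid(J_G) = \bigcap_{T,U} Q_{T,U}$, so it suffices to verify that no $Q_{T,U}$ is redundant; equivalently, $Q_{T',U'} \subseteq Q_{T,U}$ forces $(T', U') = (T, U)$.

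I would handle this in two cases. The set of vertices $i$ with both $x_i, y_i \in Q_{T,U}$ is exactly $T$, so containment immediately forces $T' \subseteq T$. If $T' = T$ and some component $G_\ell$ has $u_\ell \neq u'_\ell$, say $u_\ell < u'_\ell$, then $y_{u'_\ell} \in Q_{T,U} \setminus Q_{T,U'}$ while $x_{u_\ell} \in Q_{T,U'} \setminus Q_{T,U}$, ruling out containment. The main obstacle is the case $T' \subsetneq T$, which requires invoking the IDS property from \Cref{defn:disconnecting_sets}. For any $i \in T \setminus T'$, the inequality $c_G(T) > c_G(T \setminus \{i\})$ implies that $i$ is adjacent to at least two distinct components $G_a, G_b$ of $G \setminus T$. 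Since $i \notin T'$, the set $V(G_a) \cup \{i\} \cup V(G_b)$ lies in a single component $G'_\ell$ of $G \setminus T'$ with a unique pivot $u'_\ell \in U'$. The original pivots $u_a \in V(G_a)$ and $u_b \in V(G_b)$ contribute neither $x$ nor $y$ to $Q_{T,U}$, so containment $Q_{T',U'} \subseteq Q_{T,U}$ forces $u_a = u_b = u'_\ell$, contradicting the disjointness $V(G_a) \cap V(G_b) = \emptyset$.
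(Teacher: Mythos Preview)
The paper does not supply its own proof of this proposition; it is quoted as background from \cite[Lemma 2.1]{lerda}. So there is nothing in the paper to compare your argument against directly.

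That said, your proof is correct and self-contained. The reduction via \Cref{thm:beiprimedecomp} to the complete-graph case is the natural approach, your identification of the minimal primes of $\inid(J_{K_m})$ via the pivot $u$ is right, and the irredundancy argument is clean. In particular, the key step for $T' \subsetneq T$ --- using the IDS condition on $T$ to force two distinct pivots $u_a, u_b$ into a single component of $G \setminus T'$, hence both equal to $u'_\ell$ --- is exactly the right idea. One small point worth making explicit: when you say the minimal primes of $\inid(P_T(G))$ are the $Q_{T,U}$, you are implicitly using that a minimal prime of a sum of ideals in disjoint variable sets is a sum of minimal primes of the summands; this is standard but could be stated.
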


\begin{prop}[{\cite[Proposition 2.2]{conca}}]
	\label{prop:new_gin_prime_decomp}
	The minimal primes of $\gin(J_G)$ are exactly the ideals
	$$P_{T,U} = (x_i,y_i \mid i\in T)  + \sum_{i=1}^{c_G(T)} (x_i \mid i\in G_i, \, i \neq u _i)$$
	where $T$ is an IDS of $G$ and $U=\{u_1,\dots,u_{c_G(T)}\}$, where $u_i$ is some vertex in the $i$-th connected component of $G \setminus T$.
\end{prop}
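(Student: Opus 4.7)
By \Cref{thm:gin_gen}, every minimal generator of $\gin(J_G)$ is a squarefree monomial $y_{v_1}\cdots y_{v_s}x_ix_j$ indexed by an induced path in $G$, so $\gin(J_G)$ is a Stanley--Reisner ideal. Its minimal primes therefore correspond bijectively to the facets of the simplicial complex $\Delta$ on the variable set $\{x_1,\dots,x_n,y_1,\dots,y_n\}$ whose faces are the subsets not containing any generator's support. The plan is to exhibit an explicit bijection between pairs $(T,U)$ as in the statement and facets of $\Delta$, via
\[
(T,U) \;\longmapsto\; F_{T,U} := \{y_i : i\notin T\}\cup \{x_{u_\ell} : 1\le \ell \le c_G(T)\}.
\]
Since the monomial prime generated by the variables outside $F_{T,U}$ is exactly $P_{T,U}$, this bijection will prove the proposition.

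The easier direction is that each $F_{T,U}$ is a face: if some generator's support $\{y_{v_1},\ldots,y_{v_s},x_i,x_j\}$ lay in $F_{T,U}$, then each $v_k$ would avoid $T$ and both endpoints would be chosen representatives, so the entire path would lie in one component of $G\setminus T$, forcing two distinct representatives in one component. Maximality of $F_{T,U}$ splits into three cases depending on the added variable. If $x_w$ with $w\in T$ is added, the IDS property supplies a neighbor $a$ of $w$ in some component $G_\ell$; a shortest path from $w$ to $u_\ell$ inside the subgraph induced on $\{w\}\cup V(G_\ell)$ is automatically induced in $G$ (any chord would yield a shorter such path), and its generator has support in $F_{T,U}\cup\{x_w\}$. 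If $y_w$ with $w\in T$ is added, the IDS property yields neighbors of $w$ in two distinct components $G_\ell$ and $G_m$; concatenating two such shortest paths through $w$ (using that $G$ has no edges between distinct components of $G\setminus T$) produces an induced path from $u_\ell$ to $u_m$. Finally, if $x_w$ with $w\in G_\ell\setminus\{u_\ell\}$ is added, a shortest path from $w$ to $u_\ell$ inside $G_\ell$ handles the case.

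Conversely, given a facet $F$ of $\Delta$, the plan is to recover $T$ and $U$. One first shows that $x_i \in F$ implies $y_i \in F$: otherwise, any induced path witnessing that $F\cup\{y_i\}$ fails to be a face would have $i$ as an interior vertex, and the $i$-endpoint subpath would then be a strictly shorter induced path whose generator has support already in $F$, contradicting that $F$ is a face. Setting $T := \{i : y_i\notin F\}$ and $U := \{i : x_i\in F\}\subseteq V\setminus T$, an induced path inside a component of $G\setminus T$ between two $U$-vertices forces $|U\cap G_\ell|\le 1$. If some $G_\ell$ contained no $U$-vertex, then for $i\in G_\ell$ the set $F\cup\{x_i\}$ would still be a face — any generator using $x_i$ would force its other endpoint into $U$ and its interior into $V\setminus T$, pinning the whole path to $G_\ell$ — violating maximality. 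And if $T$ were not an IDS, some $w\in T$ would have neighbors in at most one component of $G\setminus T$, whence the analogous analysis shows $F\cup\{y_w\}$ is still a face. Hence $T$ is an IDS, $|U\cap G_\ell| = 1$ for each $\ell$, and $F = F_{T,U}$.

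The main technical obstacle is the induced-path construction in the maximality step: the subtle point is choosing the ambient subgraph — for instance $\{w\}\cup V(G_\ell)$ rather than $G$ itself — so that a shortest path in that subgraph remains induced in $G$.
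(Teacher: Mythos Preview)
The paper does not supply its own proof of this proposition; it is quoted from \cite{conca} as background. So there is nothing to compare your argument against within the paper itself.

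That said, your Stanley--Reisner approach is sound and essentially the natural one. The bijection $(T,U)\mapsto F_{T,U}$ is set up correctly, and each of the verification steps goes through. A few remarks on the points you flagged or that deserve a second look:
\begin{itemize}
\item In the maximality case ``add $x_w$ with $w\in T$'', you only need that $w$ has a neighbor in \emph{some} component $G_\ell$; the IDS condition in fact gives neighbors in at least two components, so this is more than enough. Your observation that a shortest path in the induced subgraph on $\{w\}\cup V(G_\ell)$ is automatically induced in $G$ is correct, since induced subgraphs carry all ambient edges among their vertices.
\item In the concatenation case ``add $y_w$ with $w\in T$'', the absence of edges between distinct components of $G\setminus T$ is exactly what rules out cross-chords, and chords involving $w$ are already excluded because each half is a shortest path in a subgraph containing $w$. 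So the concatenated path is indeed induced.
\item In the converse, the implication $x_i\in F\Rightarrow y_i\in F$ is the key structural fact, and your truncation-to-an-endpoint argument is clean. After that, the order of the remaining checks matters slightly: you correctly establish $|U\cap G_\ell|\le 1$ \emph{before} invoking it in the IDS verification.
\end{itemize}
One small wording issue: you write ``every minimal generator \ldots\ is indexed by an induced path'', but \Cref{thm:gin_gen} gives generators for all paths, with induced paths yielding the minimal ones. For the face condition it suffices to test minimal generators, so working with induced paths throughout is justified; it would be worth saying this explicitly in one line.
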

Note that $\inid(P_T)=\bigcap_U Q_{T,U}$ and $\gin(P_T)=\bigcap_U P_{T,U}$.

\subsection{Symbolic powers and asymptotic invariants}

A classical result in commutative algebra is that every ideal in a Noetherian ring has a primary decomposition. In particular, radical ideals have a unique minimal prime decomposition which is used to define symbolic powers of an ideal.
\begin{defn}
	If $P$ is a prime ideal, and $m \ge 1$ is an integer, the \textbf{symbolic power} $P^{(m)}$ is the ideal
	$$
	P^{(m)} = \{ r \in R \mid rq \in P^m \text{ for some } q \notin P \}.
	$$
	
	If $I$ is a radical ideal with minimal prime decomposition
	$$
	I = P_1 \cap P_2 \cap \cdots \cap P_s,
	$$
	then the \textbf{symbolic power} $I^{(m)}$ is the ideal
	$$
	I^{(m)} = P_1^{(m)} \cap P_2^{(m)} \cap \cdots \cap P_s^{(m)}.
	$$
\end{defn}

We aim to characterize the asymptotic behavior of symbolic powers of binomial edge ideals and their relevant monomial ideals. There are two particular invariants we wish to compute.
\begin{defn}
The {\bf initial degree} of a homogeneous ideal $I$ is 
$$\alpha(I)=\min\{\deg(f)\mid 0\neq f\in I\}.$$
The {\bf Waldschmidt constant} of $I$ is $$\widehat{\alpha}(I)=\lim\limits_{m\to \infty}\frac{\alpha(I^{(m)})}{m}=\inf\limits_{m\geq 1}\frac{\alpha(I^{(m)})}{m}.$$
\end{defn}
The Waldschmidt constant describes the long-term ``starting degree'' of the ideal's symbolic powers, and \cite[Lemma 2.3.1]{bocci2010} shows that it exists for any homogeneous ideal $I$. Another invariant similarly captures the asymptotic behavior of the Castelnuovo-Mumford regularity: 
\begin{defn}
The {\bf asymptotic regularity} of a homogeneous ideal $I$ is 
$$
\widehat{\reg}(I)=\lim\limits_{m\to \infty}\frac{\reg(I^{(m)})}{m}.
$$
\end{defn}

The asymptotic regularity is known to exist for monomial ideals \cite{dung21}. Both $\wald(I)$ and $\widehat{\reg}(I)$ can be computed via a convex body that encodes the monomials contained in an ideal and its symbolic powers.

\subsection{Symbolic polyhedra}
We arrive at the most important tool used in this paper. The symbolic polyhedron has equivalent definitions that we use as necessary.
\begin{defn}
\label{defn:newton_polyhedron}
Let $I\subset \mathbb{K}[x_1,\ldots,x_n]$ be a monomial ideal. The
\emph{Newton polyhedron} of $I$, denoted $\NP(I)$, is defined by
$$\NP(I)=\text{conv}\{(\alpha_1,\dots,\alpha_n)\mid x_1^{\alpha_1}\cdots
x_n^{\alpha_k}\in I\}\subset \mathbb{R}^n_{\geq 0},$$ where $\text{conv}$ is the convex hull.

The \emph{symbolic polyhedron} of $I$, denoted $\SP(I)$, is defined as \begin{align*} \SP(I)&=\bigcup _{q\geq 1}
	\frac{NP(I^{(q)})}{q}=
	\text{conv}\left\{\frac aq \; \middle|\; x^a \in I^{(q)} \text{ for some }
	q\geq 1, a \in \mathbb{Z}_{\geq 0}^n\right\}.
\end{align*}
\end{defn}
Symbolic polyhedra were introduced by Cooper-Embree-H\'a-Hoefel in \cite{CooperSP}. They fit within a more general family of Newton-Okounkov convex bodies which can be defined for graded families of ideals \cite{ha_newton-okounkov_2021}. \Cref{defn:newton_polyhedron} is based on the latter perspective, also called a limiting body in \cite{ConvexBodies}. 

Symbolic polyhedra were originally defined as intersections of Newton polyhedra. As a special case of this alternate definition, if $I$ is squarefree, $\SP(I)$ can be defined per {\cite[Remark 3.3]{SP}} as follows: 
\begin{defn}
	\label{defn:facets}
	For a squarefree monomial ideal $I\subset \mathbb{K}[x_1,\ldots,x_n]$ with minimal primes $P_M=(x_j\mid j\in M)$, $M\subset[n]$, $\SP(I)$ is equivalently defined as
	\begin{align*}
		\SP(I)&= \left\{a\in\mathbb{R}_{\geq 0}^n \;\middle|\;
		\sum_{j \in M} a_j \geq 1 \text{ for all } P_M\right\}
	\end{align*}
	Each $P_M$ corresponds to a
	\emph{one-facet} of $\SP(I)$ denoted $F_M$, whose defining inequality is $\sum_{j\in M} x_j\geq 1$.
	
	There are also $n$ \emph{zero-facets} of $\SP(I)$, one for each $x_i$
	defined by $x_i\geq 0$. The \emph{facets} of $\SP(I)$ are the collection of
	its one-facets and zero-facets. For a point $a=(a_1,\dots,a_n)\in
	\mathbb{R}^n$, $a\in \SP(I)$ if and only if the defining inequalities of
	every facet of $I$ are satisfied by $a$. A point
	$a\in \mathbb{R}^n$ is a \emph{vertex} of $\SP(I)$ if and only if for any non-zero
	vector $\textbf{u}\in \mathbb{R}^n$ and $\epsilon>0$, $a+\epsilon\textbf{u}\in
	\SP(I)$ implies $a-\epsilon\textbf{u}\notin \SP(I)$.
\end{defn}

\Cref{fig:symbolic-polyhedra} shows examples of symbolic polyhedra with $n=3$.

\begin{figure}[h!]
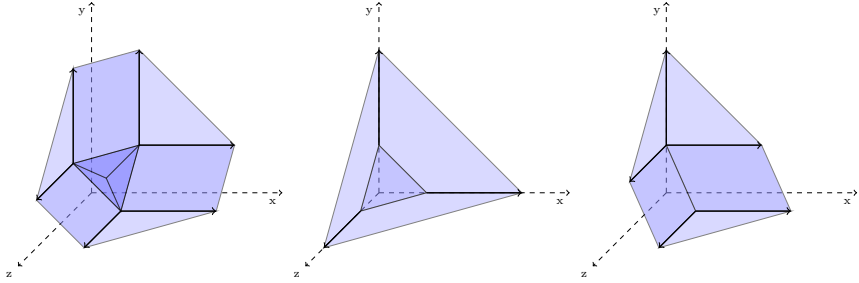

	\centering
	\includestandalone[width=0.9\linewidth]{symbolicpolyhedra}
	\caption{Symbolic polyhedra corresponding to the ideals $(xy,xz,yz)$, $(x,y,z)$, and $(xz,y)$ respectively.}
	\label{fig:symbolic-polyhedra}
\end{figure}

Keep in mind the distinction between the vertices of a symbolic polyhedron and the vertices of a graph. In a geometric context, facets and vertices are faces of the symbolic polyhedron.

\begin{defn}[{\cite[section 1.5]{kaibel2010basicpolyhedraltheory}}] \label{defn:faces}
	Let $\mathcal{P}$ be a polyhedron in $\mathbb{R}^n$ and $H^\leq$ a
	halfspace containing $\mathcal{P}$ defined by the inequality $a\cdot x\leq b$, $a\in
	\mathbb{R}^n$ and $b\in \mathbb{R}$.
	
	A \emph{face} of $\mathcal{P}$ is defined by $\mathcal{F}=\mathcal{P}\cap H^=$, where
	$H^==\{x\mid a\cdot x=b\}$ is a plane in $\mathbb{R}^n$. If $\mathcal{F}\notin
	\{\varnothing,\mathcal{P}\}$, $\mathcal{F}$ is called a \emph{proper face}. In alignment with
	\Cref{defn:facets}, \emph{facets} are $n-1$ dimensional faces of $\mathcal{P}$, and
	\textbf{vertices} are $0$ dimensional faces of $\mathcal{P}$. Note
	that faces are themselves convex polyhedra. The vertices of a face $\mathcal{F}$ are vertices of $\mathcal{P}$ that lie on $\mathcal{F}$ \cite[section
	2.1]{grunbaum_convex_1969}. We denote the set of vertices of $\SP(I)$ as $\mathcal{V}(\SP(I))$.
\end{defn}

The symbolic polyhedron determines the asymptotic invariants of a squarefree monomial ideal as follows:
\begin{thm}[{\cite[Corollary 4.6]{ConvexBodies}, \cite[Theorem 1.3]{dung21}}]
	\label{thm:wald&areg_via_SP}
	For a squarefree monomial ideal $I\in \mathbb{K}[x_1,\dots,x_n]$, we have
	\begin{align*}
		\wald(I) = \min&\left\{ \sum_{i=1}^n a_i \;\middle|\;  a\in \mathcal{V}(\SP(I)) \right\},\\
		{\widehat \reg}(I) = \max&\left\{ \sum_{i=1}^n a_i \;\middle|\;  a\in \mathcal{V}(\SP(I)) \right\}.
	\end{align*}
\end{thm}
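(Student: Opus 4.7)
The plan is to exploit the clean combinatorial description of $I^{(m)}$ for a squarefree monomial ideal $I$. From the minimal prime decomposition $I = \bigcap_M P_M$ one obtains $I^{(m)} = \bigcap_M P_M^m$, so $x^a \in I^{(m)}$ if and only if $\sum_{j \in M} a_j \geq m$ for every minimal prime $P_M$. Comparing with \Cref{defn:facets}, this yields the scaling identity
\[ \NP(I^{(m)}) \;=\; m \cdot \SP(I), \qquad m \geq 1. \]
Once this is in place, both formulas reduce to a comparison between an integer program on $\NP(I^{(m)})$ and its linear relaxation on $\SP(I)$.

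For the Waldschmidt formula, $\alpha(I^{(m)})/m$ equals the minimum of $\sum_i b_i$ taken over $b \in \SP(I) \cap \tfrac{1}{m}\mathbb{Z}^n_{\geq 0}$, which is bounded below by the continuous LP minimum $\ell := \min\{\sum_i b_i : b \in \SP(I)\}$. Since $\SP(I)$ is a rational polyhedron, $\ell$ is attained at a rational vertex $v = w/d$, and then $w$ is a lattice point of $d\cdot\SP(I) = \NP(I^{(d)})$, so $x^w \in I^{(d)}$ and $\alpha(I^{(d)})/d \leq \ell$. Combined with $\alpha(I^{(m)})/m \geq \ell$ for every $m$ and the fact that $\wald(I) = \inf_m \alpha(I^{(m)})/m$, this gives the desired equality $\wald(I) = \ell = \min_{v \in \mathcal{V}(\SP(I))} \sum_i v_i$.

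The asymptotic regularity is harder. The lower bound is analogous: since $\reg(I^{(m)})$ dominates the maximum degree of a minimal monomial generator of $I^{(m)}$, and the minimal generators correspond to the minimal lattice points of $\NP(I^{(m)}) = m \cdot \SP(I)$, a rational-vertex clearing-of-denominators argument symmetric to the Waldschmidt case shows that this maximum, divided by $m$, converges to $r := \max_{v \in \mathcal{V}(\SP(I))} \sum_i v_i$. The main obstacle is the matching upper bound, because $\reg(I^{(m)})$ depends on the entire minimal free resolution, not just on generator degrees. To handle it I would invoke a cohomological expression for $\reg(S/I^{(m)})$ via $a$-invariants of local cohomology modules (e.g.\ Takayama's formula, or the monomial bounds of Hoa--Trung), translate each contribution into a linear functional supported on a face of $\SP(I)$, and verify that after normalizing by $m$ the supremum is precisely $r$. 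Controlling these higher cohomological contributions combinatorially, so that no degree grows faster than $r \cdot m$ asymptotically, is where the real work of this direction lies.
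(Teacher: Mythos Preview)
The paper does not prove this theorem; it simply cites \cite[Corollary~4.6]{ConvexBodies} for the Waldschmidt formula and \cite[Theorem~1.3]{dung21} for the asymptotic regularity formula. So your proposal is being measured against those external arguments rather than anything the paper supplies.

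Your Waldschmidt argument is essentially the standard one and is correct in substance. One inaccuracy: the identity $\NP(I^{(m)}) = m\cdot\SP(I)$ does not hold for every $m\geq 1$. For $I=(x_1x_2,x_1x_3,x_2x_3)$ and $m=1$, the point $(\tfrac12,\tfrac12,\tfrac12)$ lies in $\SP(I)$ but not in $\NP(I)$, since every monomial of $I$ has degree at least $2$. What is true, and what your argument actually uses, is that the \emph{lattice points} of $m\cdot\SP(I)$ coincide with the exponent vectors of monomials in $I^{(m)}$; that suffices for both inequalities, so the slip is cosmetic.

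For $\widehat{\reg}$, your lower bound is fine and aligns with what the paper later records as \Cref{prop: vsp}. The upper bound, however, is a genuine gap. You correctly identify the difficulty---$\reg(I^{(m)})$ depends on the whole resolution, not just generator degrees---and you name plausible tools (Takayama's formula, Hoa--Trung type bounds), but you do not carry anything out. The sentence ``translate each contribution into a linear functional supported on a face of $\SP(I)$'' is a hope, not a proof: one must actually bound the degrees in which $H^i_{\mathfrak m}(S/I^{(m)})$ is nonzero and show that, after dividing by $m$, none of them exceeds $r$ in the limit. That analysis is precisely the content of \cite{dung21}, and without it the $\widehat{\reg}$ half of your proposal is incomplete.
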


\section{Results for binomial edge ideals and the initial ideal}

We first state some useful lemmas about the associated primes of our relevant ideals. These lemmas will be used to compare invariants of a graph to those of its subgraphs throughout the paper.
\begin{lem}
	\label{lem:primedecompositioncontainment}
	Let $I\subseteq J$ be radical ideals. 
	If $J\subset P_J$, $P_J$ a prime, then there exists a minimal prime $P_I$ of $I$ such that $I\subseteq P_I\subseteq P_J$.
\end{lem}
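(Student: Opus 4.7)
The plan is to combine the radical decomposition of $I$ with the standard prime-avoidance argument for intersections. Since the paper is implicitly working over a Noetherian ring (the polynomial ring $S$), every radical ideal has a \emph{finite} minimal prime decomposition, and this finiteness is exactly what makes the argument go through.

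First I would observe that the hypotheses give the chain $I \subseteq J \subseteq P_J$, so it suffices to show: whenever a radical ideal $I$ is contained in a prime $P_J$, some minimal prime of $I$ is contained in $P_J$. Next, using that $S$ is Noetherian, I would write $I = P_1 \cap \cdots \cap P_s$, where $P_1,\dots,P_s$ are the (finitely many) minimal primes of $I$. The containment $I \subseteq P_J$ then reads
\[
P_1 \cap \cdots \cap P_s \subseteq P_J.
\]
Since $P_1 P_2 \cdots P_s \subseteq P_1 \cap \cdots \cap P_s$, the product lies in $P_J$ as well, and primality of $P_J$ forces $P_i \subseteq P_J$ for some $i$. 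Setting $P_I := P_i$ yields $I \subseteq P_I \subseteq P_J$, as desired.

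There is essentially no obstacle here beyond invoking the Noetherian hypothesis to guarantee that the intersection defining the radical decomposition is finite; without finiteness the product-containment step would fail. I would state this finiteness explicitly (or cite the ambient polynomial ring $S$) so that the reduction from $I \subseteq P_J$ to $P_i \subseteq P_J$ for one of the minimal primes is rigorous. The proof is then a two-line application of the primality of $P_J$ to the finite product $P_1 \cdots P_s$.
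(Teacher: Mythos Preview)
Your proof is correct and follows essentially the same approach as the paper: both use the finite minimal prime decomposition $I = P_1 \cap \cdots \cap P_s$ and the primality of $P_J$ applied to a product to force some $P_i \subseteq P_J$. The only cosmetic difference is that the paper argues by contradiction with a product of chosen elements $f_{P_I} \in P_I \setminus P_J$, whereas you invoke the ideal-theoretic statement that a prime containing a finite product of ideals contains one of the factors.
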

\begin{proof}
	Given an irredundant prime decomposition $I=\bigcap P_I\subset P_J$, assume towards a contradiction that $P_I\not\subseteq P_J$ for all minimal primes $P_I$ of $I$. For each $P_I$, pick $f_{P_I}\in P_I\backslash P_J$. Since $P_J$ is prime, $ \prod_{P_I} f_{P_I}\notin P_J$, but $\prod_{P_I} f_{P_I}\in \bigcap P_I\subset P_J$. This is a contradiction, so the result holds.
\end{proof}

\begin{lem}
	\label{lem:primedecompcontainment}
	For a graph $G$, let $I_G$ be either the binomial edge ideal $J_G$, $\gin (J_G)$, or $\inid (J_G)$. Let $H$ be an induced subgraph of $G$, $I_H$ the ideal among $J_H$, $\gin (J_H)$, or $\inid (J_H)$ that matches the choice for $I_G$, and let $P_H$ be a minimal prime of $I_H$. There exists a minimal prime $P_G$ of $I_G$ such that $P_H \subset P_G$.
\end{lem}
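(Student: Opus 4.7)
The plan is to translate $P_H$ into the combinatorial data $(T_H,U_H)$ provided by \Cref{thm:beiprimedecomp} (for $J_G$), \Cref{prop:new_gin_prime_decomp} (for $\gin$), or \Cref{prop:in_prime_decomp} (for $\inid$), and then construct a compatible IDS $T_G$ of $G$, together with auxiliary data $U_G$ in the $\gin$ and $\inid$ cases, so that the associated minimal prime of $I_G$ dominates $P_H$. Write $W=V(G)\setminus V(H)$ and let $H_1,\ldots,H_s$ be the connected components of $H\setminus T_H$.

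The construction is a greedy reduction. Start with $T^{(0)}:=T_H\cup W$, and at each step delete from the current $T^{(k)}$ any $v\in T^{(k)}\setminus T_H$ failing the irredundancy condition, i.e., with $c_G(T^{(k)})\le c_G(T^{(k)}\setminus\{v\})$. Equivalently, only $v$ whose neighbors in $G\setminus T^{(k)}$ lie in at most one connected component are removed, so the deletion never merges two existing components of $G\setminus T^{(k)}$. Let $T_G$ be the final set.

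The key invariant throughout the reduction is that $H_1,\ldots,H_s$ sit in pairwise distinct components of $G\setminus T^{(k)}$. This holds at $k=0$ because $H$ is induced, making $G\setminus T^{(0)}=H\setminus T_H$, and it is preserved by each reduction step for the reason just noted. Consequently, at termination every $v\in T_G\setminus T_H$ is irredundant by design, while every $w\in T_H$ is irredundant because the IDS condition on $T_H$ forces $w$ to have $H$-neighbors in at least two distinct $H_j$'s, which the invariant places in distinct components of $G\setminus T_G$. Hence $T_G$ is an IDS of $G$.

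For the containment, $T_H\subseteq T_G$ takes care of the $(x_i,y_i)$-part of $P_H$, and the invariant implies that each $V(H_j)$ lies inside the vertex set of some component $G_\ell$ of $G\setminus T_G$. In the $I_G=J_G$ case this immediately yields $J_{\widetilde{H_j}}\subseteq J_{\widetilde{G_\ell}}$ and hence $P_{T_H}(H)\subseteq P_{T_G}(G)$. In the $I_G=\gin(J_G)$ or $\inid(J_G)$ cases, I will define $U_G=\{u_\ell^G\}$ by setting $u_\ell^G:=u_j^H$ whenever $V(H_j)\subseteq V(G_\ell)$, and choosing $u_\ell^G$ to be any element of $V(G_\ell)$ otherwise; this forces every $x_l$ (and, in the $\inid$ case, $y_l$) generator contributed by $H_j$ to $P_H$ to appear in the corresponding generator family of $P_G$. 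The main obstacle is verifying that the greedy reduction never renders a vertex of $T_H$ redundant, which is exactly what the invariant provides.
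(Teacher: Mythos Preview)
Your proof is correct and follows essentially the same route as the paper: both start from $T^{(0)}=T_H\cup(V(G)\setminus V(H))$, greedily delete redundant vertices only from $T^{(k)}\setminus T_H$, and use the invariant that the $H_j$ remain in pairwise distinct components to conclude that the resulting $T_G$ is an IDS containing $T_H$; the choice of $U_G$ in the $\gin$ and $\inid$ cases is also identical. Your phrasing of the invariant (deletion never merges components) is in fact cleaner than the paper's contrapositive argument.
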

\begin{proof}
	Let $V_G$ denote the set of vertices of $G$ and use similar notation for its subgraphs.
	
	\textbf{Case 1: $I_G:=J_G$}
	
	By \Cref{thm:beiprimedecomp}, there is an IDS $T$ of $H$ such that
	$$ P_H = (x_i,y_i \mid i \in T) + J_{\widetilde{H_1}} + \cdots + J_{\widetilde{H_{c_H(T)}}},$$
	where the $H_i$ are the connected components of $H\setminus T$.
	
	First, we construct an IDS that defines a suitable minimal prime of $J_G$.
	Consider $T_0 = T \cup (V_G\setminus V_H) \subset V_G$, with $c_G(T_0)=c_H(T)$. If there is a vertex $v\in T_0\setminus T$ such that $c_G(T_0\setminus \{v\})\geq c_G(T_0)$, set $T_1=T_0\setminus \{v\}$. Repeating this inductively we arrive at a set $T_k$
	such that for all $v\in T_k\setminus T$, $c_G(T_k\setminus \{v\}) < c_G(T_k)$. This process will terminate because $|V_G|$ is finite. Moreover the following inequalities hold: $c_H(T)=c_G(T_0)\leq c_G(T_1)\leq \cdots \leq c_G(T_k)$.
	
	Let $G_1,\dots,G_{c_G(T_k)}$ be the connected components of $G \setminus T_k$, and suppose $H_j,H_{j'}\subset G_i$ for some $i$ and $j\neq j'$. Then at some point, a vertex must have been removed from a $T_\ell$ that connected $G_j$ and $G_{j'}$ in $G\setminus T_{\ell+1}$, where $H_j\subset G_j$, $H_{j'}\subset G_{j'}$, and $G_j$ and $G_{j'}$ are distinct connected components of $G\setminus T_\ell$. This shows that $c_G(T_{\ell+1})>c_G(T_\ell)$ contradicting the construction of $T_k$. Thus each $H_j$ is contained in a \textit{distinct} $G_i$.  
	
	By virtue of $T$ being an IDS of $H$, every $v\in T$ has $c_H(T\setminus \{v\})<c_H(T)$. That is, there are connected components $H_j$ and $H_{j'}$ such that the subgraph of $H$ induced by $V_{H_j}\cup V_{H_{j'}}\cup \{v\}$ is connected. If $H_j$ lies in $G_i$ and $H_{j'}$ lies in $G_{i'}$, with $i\neq i'$ as before, then the subgraph of $G$ induced by $V_{G_i}\cup V_{G_{i'}}\cup \{v\}$ is connected. Therefore, for all $v\in T$, $c_G(T_k\setminus \{v\})<c_G(T_k)$, and $T_k$ is indeed an IDS of $G$. 
	
	Let $P_G:=P_{T_k}$, our desired minimal prime. We will show $P_H \subset P_G$.
	For the first term of \eqref{eq:P_T}, since $T \subseteq T_k$,  $ (x_i,y_i \mid i \in T) \subseteq (x_i,y_i \mid i \in T_k).$ For the other terms, each $H_i$ is contained in some $G_j$ because $(T_k \setminus T) \cap V_H =
	\emptyset$. So $J_{\widetilde{H_i}} \subseteq J_{\widetilde{G_j}}$, and $P_H \subset P_G$.
	
	\textbf{Case 2: $I_G:=\gin(J_G)$}
	
	By \Cref{prop:new_gin_prime_decomp}, a minimal prime $P_H$ of $\gin(J_H)$ has the form
	$$ P_{T,U} = (x_i,y_i \mid i\in T)  + \sum_{i=1}^{c_H(T)} (x_i \mid i\in H_i, \, i \neq u _i)$$
	for a suitable IDS $T$ and set $U=\{u_1,\dots,u_{c_H(T)}\}$. Following the same method of Case 1, construct $T_k \supseteq T$. If $G_1,\dots,G_{c_G(T)}$ are the connected components of $G\setminus T_k$, as before, each $H_j\subseteq G_i$ for some $1\leq i\leq c_G(T_k)$. Relabel the $G_i$'s so that $H_j\subseteq G_j$ for $1\leq j\leq c_H(T)$, and let $U'=(u'_1,\dots,u'_{c_G(T_k)})$, where $u'_i=u_i$ for $1\leq i\leq c_H(T)$ and $u'_i\in V_{G_i}$ is chosen arbitrarily for $c_H(T)\leq i\leq c_G(T_k)$. Then $P_{T_k, U'}$ is our desired minimal prime.
	
	Since $T \subseteq T_k$, there is a containment $ (x_i,y_i \mid i \in T) \subseteq (x_i,y_i \mid i \in T_k).$ For the other terms, since $V_{H_i}\subset V_{G_i}$ and $u_i=u'_i$ for $1\leq i\leq c_H(T)$, there are containments $(x_i \mid i\in H_i, i\neq u_i)\subseteq (x_i\mid i\in G_i, i\neq u_i')$. Thus, $P_{T,U} \subset P_{T_k,U'}$.
	
	\textbf{Case 3: $I_G:= \inid(J_G)$}
	
	Take a minimal prime $Q_{T,U}$ of $\inid(J_H)$ as defined in \Cref{prop:in_prime_decomp}, and construct a minimal prime $Q_{T_k,U'}$ as in case 2. By the same reasoning, $Q_{T,U}\subset Q_{T_k,U'}$.
\end{proof}

It is not true that
every minimal prime $P_G$ contains exactly one minimal prime $P_H$, as
illustrated by the following counterexample on the net graph.

\begin{example}
	Let $G, H$ be as in \Cref{fig:net}. A minimal prime ideal of $\mathrm{in}_<(J_G)$ with $T=\{1,2,3\}$ is $P_T=(x_1,x_2,x_3,y_1,y_2,y_3)$, which contains all three of the minimal primes of $\mathrm{in}_<(J_H)$; $(x_1,x_2),(x_1,y_3),\text{and} \,(y_2,y_3)$. 
	
	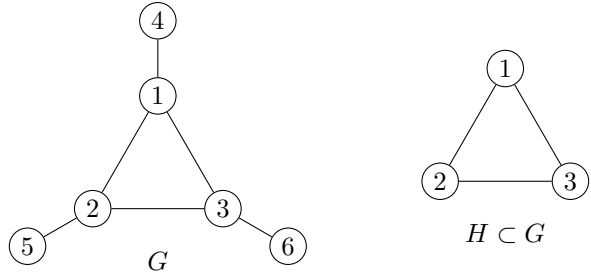
\begin{figure}[h!]
		\centering
		\begin{tikzpicture}[scale=1, every node/.style={circle, draw, inner sep=2pt}]
			\node (1) at (90:1) {$1$};
			\node (2) at (210:1) {$2$};
			\node (3) at (330:1) {$3$};
			\node (4) at (90:2) {$4$};
			\node (5) at (210:2) {$5$};
			\node (6) at (330:2) {$6$};
			\draw (1) -- (2) -- (3) -- (1);
			\draw (1) -- (4);
			\draw (2) -- (5);
			\draw (3) -- (6);
			\node[draw=none, inner sep=0pt] at (0,-1.2) {$G$};
		\end{tikzpicture}
		\qquad \qquad
		\begin{tikzpicture}[scale=1, every node/.style={circle, draw, inner sep=2pt}]
			\node (1) at (90:1) {$1$};
			\node (2) at (210:1) {$2$};
			\node (3) at (330:1) {$3$};
			\draw (1) -- (2) -- (3) -- (1);
			\node[draw=none, inner sep=0pt] at (0,-1.2) {$H\subset G$};
		\end{tikzpicture}
		\caption{The net graph and an induced subgraph}
		\label{fig:net}
	\end{figure}
\end{example}
With the prime decompositions established, we first compute the Waldschmidt constant of binomial edge ideals and their initial ideals.
\begin{thm}\label{thm: waldschmidt J_G}
	For all graphs $G$, $\widehat{\alpha}(J_G) = \widehat{\alpha}(\mathrm{in}(J_G)) = 2$.
\end{thm}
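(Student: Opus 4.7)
The proof splits into upper and lower bounds. For the upper bound, both ideals are generated in degree $2$: $J_G$ by the binomials $x_iy_j - x_jy_i$, and $\inid(J_G)$ by the monomials $x_iy_j$ arising from the length-one admissible paths in \Cref{thm:inid_gen}. The general inequality $\widehat{\alpha}(I) = \inf_m \alpha(I^{(m)})/m \leq \alpha(I)$ therefore gives $\widehat{\alpha}(J_G) \leq 2$ and $\widehat{\alpha}(\inid(J_G)) \leq 2$, assuming $G$ has at least one edge (the edgeless case being vacuous).

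For the lower bound on $\widehat{\alpha}(J_G)$, my plan is to pass to the minimal prime $P_\emptyset(G)$ coming from $\emptyset \in \mathcal{T}(G)$, so that $J_G^{(m)} \subseteq P_\emptyset(G)^{(m)}$ by \Cref{thm:beiprimedecomp}. By construction, $P_\emptyset(G) = \sum_k J_{\widetilde{G_k}}$, summed over connected components $G_k$ of $G$ with at least two vertices; each summand is the ideal of $2\times 2$ minors of the generic $2\times |V(G_k)|$ matrix, i.e., a maximal-minor ideal. Classical results on maximal-minor ideals give $J_{\widetilde{G_k}}^{(a)} = J_{\widetilde{G_k}}^a$, and since the summands live in disjoint sets of variables, the standard formula $(I_1 + I_2)^{(m)} = \sum_{a+b=m} I_1^{(a)} I_2^{(b)}$ for disjoint-variable ideals yields $P_\emptyset(G)^{(m)} = P_\emptyset(G)^m$. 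Because $P_\emptyset(G)$ is generated in degree $2$ we have $P_\emptyset(G)^m \subseteq \mathfrak{m}^{2m}$, whence $\alpha(J_G^{(m)}) \geq 2m$ and $\widehat{\alpha}(J_G) \geq 2$.

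For the lower bound on $\widehat{\alpha}(\inid(J_G))$, I would invoke \Cref{thm:wald&areg_via_SP}, reducing the claim to showing that every point of $\SP(\inid(J_G))$ has coordinate sum at least $2$. With $T = \emptyset$, \Cref{prop:in_prime_decomp} supplies a minimal prime $Q_{\emptyset, U}$ for each choice $U = \{u_k\}$ of one vertex per connected component. Taking $U_a$ to consist of the largest vertex in each component yields $Q_{\emptyset, U_a}$ generated only by $x$-variables, while taking $U_b$ to consist of the smallest vertices yields $Q_{\emptyset, U_b}$ generated only by $y$-variables; both are nonzero because some component has at least two vertices. Any point of $\SP(\inid(J_G))$ must satisfy the one-facet inequalities of both primes, and since the two inequalities involve disjoint coordinate subsets, adding them forces the total coordinate sum to be at least $2$. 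The main obstacle is the disjoint-variables step in the $J_G$ case: justifying $P_\emptyset(G)^{(m)} = P_\emptyset(G)^m$ for disconnected $G$ requires either a direct argument that $P_\emptyset(G)^m$ has no embedded primes, or an appropriate citation of the symbolic-power formula for sums of primes in disjoint variables.
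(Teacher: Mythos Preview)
Your argument is correct. The $J_G$ half matches the paper's approach: both pass to the minimal prime for $T=\emptyset$ and use that symbolic and ordinary powers agree there. The paper phrases this as ``$J_{K_n}$ is a minimal prime of $J_G$'' and cites \cite[Corollary~3.4]{ene2011symbolic}, which is literally the connected case; your version via $P_\emptyset(G)=\sum_k J_{\widetilde{G_k}}$ and the disjoint-variable symbolic-power formula is the honest way to treat disconnected $G$, and the formula you flag as the main obstacle is exactly \cite[Theorem~4.1]{ha2021}, already invoked later in the paper.

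For $\inid(J_G)$ you take a genuinely different route. The paper argues that the minimal primes of $\inid(J_{K_n})$ sit among those of $\inid(J_G)$, giving $\inid(J_G)^{(m)}\subseteq \inid(J_{K_n})^{(m)}=\inid(J_{K_n})^m$ via \cite[Theorem~5.9]{Simis1994OnTI}. Your argument instead reads the bound directly off the symbolic polyhedron: the two primes $Q_{\emptyset,U_a}$ (all $x$'s) and $Q_{\emptyset,U_b}$ (all $y$'s) give one-facet inequalities supported on disjoint coordinate blocks, so their sum forces every point of $\SP(\inid(J_G))$ to have coordinate sum at least $2$. This is self-contained---it needs only \Cref{prop:in_prime_decomp} and \Cref{thm:wald&areg_via_SP} rather than an external result on $\inid(J_{K_n})$---and it works uniformly for disconnected $G$. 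The paper's route is shorter to state but leans on a citation; yours exposes the combinatorial reason.

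One cosmetic point: your sentence that $\inid(J_G)$ is ``generated by the monomials $x_iy_j$'' is not literally true for non-closed $G$, since longer admissible paths contribute higher-degree generators. What you need (and what you use) is only that $\alpha(\inid(J_G))=2$, which follows because every generator in \Cref{thm:inid_gen} has degree at least $2$ and the edge generators have degree exactly $2$.
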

\begin{proof}
	Let $J_G$ be the binomial edge ideal of a graph $G$ with prime decomposition $J_G=\bigcap_{T\in \mathcal{T}(G)}P_T(G)$. Since $\widehat{\alpha}(J_G)\le \alpha(J_G)=2$ and $\widehat{\alpha}(\inid(J_G))\le \alpha(\inid(J_G))=2$, it suffices to show that $\widehat{\alpha}(J_G)$ and $\widehat{\alpha}(\inid(J_G))$ are each at least 2. 
	
	We first note that $J_{K_n}^{(m)}=J_{K_n}^{m}$ and $\inid(J_{K_n})^{(m)}=\inid(J_{K_n})^{m}$ by \cite[Corollary 3.4]{ene2011symbolic} and \cite[Theorem 5.9]{Simis1994OnTI}. Since $T=\emptyset$ is an IDS, if $|G|=n$ then the ideal $J_{K_n}$ is a minimal prime for $J_G$ and the minimal primes of $\inid(J_{K_n})$ are contained in the set of minimal primes for $\inid(J_G)$. In particular, $J_G^{(m)}\subset J_{K_n}^{(m)}$ and $\inid(J_G)^{(m)}\subset\inid(J_{K_n})^{(m)}$.
	Hence, 
	\begin{eqnarray*}
		\alpha(J_G^{(m)})\geq \alpha( J_{K_n}^{(m)}) &=&\alpha(J_{K_n}^m)=2m \ \text{ and }\\
		\alpha(\inid(J_G)^{(m)})\geq \alpha( \inid(J_{K_n})^{(m)}) &=&\alpha(\inid(J_{K_n})^m)=2m,
	\end{eqnarray*}
	whence the inequalities $\widehat{\alpha}(J_G) \geq  2$ and $\widehat{\alpha}(\inid(J_G)) \geq  2$ follow. Therefore $\widehat{\alpha}(J_G) = \widehat{\alpha}(\inid(J_G)) = 2$.
\end{proof}
Next we compute the asymptotic regularity for binomial edge ideals with equal ordinary and symbolic powers.
\begin{thm}
	\label{closed_graph_regularity}
	If $I$ is a homogeneous ideal in a polynomial ring $R$ that satisfies $I^{(m)}=I^m$ for all $m\ge1$, then $\widehat{\reg}(I)$ exists. If $I$ is also a monomial ideal or equigenerated, then $\widehat{\reg}(I)=d(I)$, where $d(I)$ is the maximal degree of a minimal generator of $I$.
\end{thm}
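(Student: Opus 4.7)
The plan is to reduce the existence statement to the well-known asymptotic linearity of regularity for ordinary powers, and then pin down the value $d(I)$ in each of the two special cases via the maximum degree of a minimal generator of $I^m$. By the theorems of Cutkosky--Herzog--Trung \cite{CHT} and Kodiyalam \cite{vijay2000}, for any homogeneous ideal in a polynomial ring the regularity $\reg(I^m)$ is eventually a linear function of $m$ with slope bounded above by $d(I)$; in particular $\lim_{m\to\infty}\reg(I^m)/m$ exists and is at most $d(I)$. Since $I^{(m)}=I^m$ for all $m\ge 1$, this limit coincides with $\widehat{\reg}(I)$, establishing both existence and the upper bound $\widehat{\reg}(I)\le d(I)$.

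For the matching lower bound I would use the elementary inequality $\reg(J)\ge d(J)$, valid for every homogeneous ideal $J$ because a minimal generator of maximum degree contributes a nonzero Betti number $\beta_{0,d(J)}(J)$. It therefore suffices to prove $d(I^m)\ge m\cdot d(I)$ in each case, since dividing by $m$ and passing to the limit then gives $\widehat{\reg}(I)\ge d(I)$. In the equigenerated case, every element of the natural generating set $\{g_{i_1}\cdots g_{i_m}\}$ of $I^m$ is a form of degree $m\cdot d(I)$, so $I^m$ is itself equigenerated in that degree and $d(I^m)=m\cdot d(I)$.

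For the monomial case (which in the applications of the paper is squarefree), I would pick a minimal generator $g$ of $I$ of maximum degree $d(I)$, supported on a set $T$. Any minimal generator of $I^m$ has the form $u=h_1\cdots h_m$ for minimal generators $h_j$ of $I$; if $u$ divides $g^m$, then each $h_j$ is a squarefree monomial supported on $T$, hence divides $g$, and the minimality of $g$ forces $h_j=g$ for every $j$. Thus $u=g^m$, which shows $g^m$ itself is a minimal generator of $I^m$ and $d(I^m)\ge m\cdot d(I)$. No substantial obstacle arises: once the hypothesis $I^{(m)}=I^m$ is used to transfer asymptotic information from symbolic to ordinary powers, both subcases reduce to direct observations about the generating set of $I^m$; the only delicate point is verifying that $g^m$ remains minimal in $I^m$ in the monomial case, which is where the squarefree nature of $g$ is crucial.
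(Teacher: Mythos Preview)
Your proof is correct and follows the same route as the paper: both reduce via \cite{CHT} to computing $\lim_{m\to\infty} d(I^m)/m$ and then determine $d(I^m)$ directly in each of the two cases. The only difference is in the monomial case, where the paper invokes \cite[Lemma~4.2]{dung21} to see that powers of minimal generators remain minimal in $I^{(m)}$, while you give a self-contained argument under the extra assumption that $I$ is squarefree; this restriction is harmless, since the paper's definition of $I^{(m)}$ already presupposes $I$ radical and every application is to squarefree ideals.
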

\begin{proof}
	By \cite[Theorem 1.1]{CHT}, if $I$ is a homogeneous ideal in a polynomial ring, then $\reg(I^m)$ is a linear function of $m$ for $m\gg0$, and $$
	\lim_{m\to\infty}\frac{\reg(I^m)}{m}=\lim_{m\to\infty} \frac{d(I^m)}{m},
	$$ where $d(I^m)$ is the maximal degree of a minimal generator of $I^m$. If $I$ is equigenerated, then every generator of $I^{m}$ has degree $m\cdot d(I)$. Therefore, $\widehat{\reg}(I)=\lim_{m\to\infty}d(I^m)/m=d(I)$. By \cite[Lemma 4.2]{dung21}, if $I$ is a monomial ideal and $x^{\alpha}\in I$ is a minimal generator of $I$, then $(x^{\alpha})^m$ is a minimal generator of $I^{(m)}=I^m$. In particular, $d(I^m)\ge m\cdot d(I)$. Since $I^m$ can have no generator with degree greater than $m\cdot d(I)$, it follows that $\widehat{\reg}(I)=\lim_{m\to\infty}d(I^m)/m=d(I)$.
\end{proof}

\Cref{closed_graph_regularity} allows us to find the asymptotic regularity of $J_G$ and $\inid(J_G)$ when $G$ is a very special type of graph:

\begin{defn}\label{def: closed graph and labeling}
	A graph $G$ is \textbf{closed} if there exists a labelling of the vertices such that, for all edges $\{i,j\}$ and $\{k,l\}$ with $i<j$ and $k<l$ one has $\{j,l\}\in E(G)$ if $i=k$, and $\{i,k\}\in E(G)$ if $j=l$. By \cite[Theorem 7.10]{HHO}, this condition is equivalent to a graph being chordal, claw-free, net-free, and tent-free.
\end{defn}

From now on, we assume $|G|=n$ and work over $S=\mathbb{K}[x_1,...,x_n,y_1,...,y_n]$, $\mathbb{K}$  a field. Then, the following corollary follows from \cite[Corollary 3.4]{ene2011symbolic} and \cite[Theorem 5.9]{Simis1994OnTI}, which establish the equality of ordinary and symbolic powers for $J_G$ and $\inid(J_G)$, respectively, provided $G$ is a closed graph.

\begin{cor}
	\label{closed_graph_cor}
	For a closed graph $G$, $\widehat{\reg}(J_G)=\widehat{\reg}(\inid(J_G))=2$.
\end{cor}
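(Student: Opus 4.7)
The corollary is a direct application of \Cref{closed_graph_regularity}, so my plan is simply to verify, for each of $J_G$ and $\inid(J_G)$, that the two hypotheses of that theorem hold and that the relevant maximal generator degree is $2$.

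For $J_G$, I would first observe that the ideal is equigenerated with $d(J_G)=2$: every defining binomial $x_i y_j - x_j y_i$ is homogeneous of degree $2$. The equality of ordinary and symbolic powers $J_G^{(m)} = J_G^m$ for closed $G$ is supplied by \cite[Corollary 3.4]{ene2011symbolic}. Plugging these two inputs into the ``equigenerated'' branch of \Cref{closed_graph_regularity} gives $\widehat{\reg}(J_G)=d(J_G)=2$.

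For $\inid(J_G)$, I would apply the ``monomial'' branch of \Cref{closed_graph_regularity}. The ideal is monomial by definition, and the equality $\inid(J_G)^{(m)} = \inid(J_G)^m$ for closed $G$ is \cite[Theorem 5.9]{Simis1994OnTI}. It remains to see $d(\inid(J_G)) = 2$. I would handle this by invoking the standard characterization of closed graphs (cf. \cite{herzog2010binomial,HHO}) as precisely those graphs for which the natural binomial generators of $J_G$ form a Gr\"obner basis with respect to the lexicographic order used in the paper; equivalently, via \Cref{thm:inid_gen}, for a closed graph every admissible path between $i < j$ reduces to the edge $\{i,j\}$ itself, so $\inid(J_G)$ is generated by the quadratic monomials $x_i y_j$ for $\{i,j\} \in E(G)$, $i<j$.

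I do not anticipate any substantive obstacle: once \Cref{closed_graph_regularity} is in hand, everything else is bookkeeping. The only subtle point worth being explicit about is the justification that $d(\inid(J_G))=2$ for closed $G$, which requires the Gr\"obner basis characterization of closedness rather than just the definition given in \Cref{def: closed graph and labeling}.
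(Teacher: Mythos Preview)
Your proposal is correct and matches the paper's approach exactly: the paper derives the corollary from \Cref{closed_graph_regularity} together with \cite[Corollary 3.4]{ene2011symbolic} and \cite[Theorem 5.9]{Simis1994OnTI} for the equality of ordinary and symbolic powers. You are in fact slightly more explicit than the paper in justifying $d(\inid(J_G))=2$ via the Gr\"obner basis characterization of closedness, a point the paper leaves implicit.
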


\Cref{closed_graph_cor} also follows from \cite[Prop. 3.2]{ene2016powers}, which states for closed graphs $G$ with connected components $G_1,G_2,...,G_{C(G)}$, we have
$$\reg(S/J_G^m)=\reg(S/\inid(J_G^m))=l_1+....+l_{C(G)}+2(m-1),$$
where $l_i$ is the length of the longest induced path of $G_i$. We note that \Cref{closed_graph_regularity} is slightly more general as it applies to all graphs satisfying $J_G^{(m)}=J_G^m$, including net-free caterpillar graphs \cite{Jahani} and pendant-clique graphs \cite{jahani2024}.

Jayanthan, Kumar, and Sankar showed in \cite[Proposition 3.3]{Jayanthan_2020} that if $H$ is an induced subgraph of $G$, then for all $i,j\ge0$ and $m\ge1$, $\beta_{ij}(S/J_H^m)\le\beta_{ij}(S/J_G^m)$ where $\beta_{i,j}(M)=\dim_\K \operatorname{Tor}_S(M,\K)$ denotes the $(i,j)-$th graded Betti number of a graded $S$-module $M$. We provide an analogous result for symbolic powers.

\begin{lem}
	\label{sympower_subgraph}
	Let $H$ be a subgraph of $G$ on the vertices $i_1,...,i_k$, and set $S_H = \K[x_{i_1},...,x_{i_k},y_{i_1},...,y_{i_k}]$. Then containments $J_H^{(m)}\subset J_G^{(m)}$ hold for all integers $m\geq 1$. If additionally $H$ is an induced subgraph, then $J_H^{(m)}=J_G^{(m)}\cap S_H$. 
\end{lem}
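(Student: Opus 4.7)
The proof naturally splits into the subgraph containment and the induced equality, with both driven by the prime decomposition $J_G = \bigcap_{T\in\mathcal{T}(G)} P_T(G)$ from \Cref{thm:beiprimedecomp}. A preliminary ingredient I would first establish is that $P_T(G)^{(m)} = P_T(G)^m$ for every $T \in \mathcal{T}(G)$. Since $P_T(G)$ decomposes as a sum on pairwise disjoint sets of variables of the variable ideal $(x_i,y_i : i\in T)$ and the binomial edge ideals $J_{K_{V(G_\ell)}}$ over the components of $G\setminus T$, and each summand has symbolic powers equal to ordinary powers (trivially for the variable ideal, and by \cite[Cor.~3.4]{ene2011symbolic} for each $J_{K_n}$), the standard formula $(I+J)^{(m)} = \sum_{a+b=m} I^{(a)} J^{(b)}$ for ideals on disjoint variable sets then yields the claim. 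The analogous statement holds for each $P_{T'}(H)$.

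For the subgraph containment, the plan is to compare minimal primes using \Cref{lem:primedecompositioncontainment}. Since $J_H \subseteq J_G$, for every $T\in\mathcal{T}(G)$ there exists $T'\in\mathcal{T}(H)$ with $P_{T'}(H) \subseteq P_T(G)$. Raising to the $m$-th power and applying the preliminary reduction gives $P_{T'}(H)^{(m)} = P_{T'}(H)^m \subseteq P_T(G)^m = P_T(G)^{(m)}$, so $J_H^{(m)} \subseteq P_{T'}(H)^{(m)} \subseteq P_T(G)^{(m)}$ for this $T$. Intersecting over all $T \in \mathcal{T}(G)$ produces $J_H^{(m)} \subseteq J_G^{(m)}$.

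For the induced-subgraph equality, one direction follows immediately from the subgraph containment together with $J_H^{(m)} \subseteq S_H$. For the reverse, I would fix $f \in J_G^{(m)} \cap S_H$ and an arbitrary $T' \in \mathcal{T}(H)$, then invoke the explicit IDS construction from the proof of \Cref{lem:primedecompcontainment} to produce $T \in \mathcal{T}(G)$ with $T \supseteq T'$, $T \setminus T' \subseteq V_G \setminus V_H$, and an injective correspondence $\sigma$ sending each component $H_j$ of $H \setminus T'$ into a distinct component $G_{\sigma(j)}$ of $G \setminus T$. Consider the $\K$-algebra retraction $\phi\colon S\to S_H$ that fixes $S_H$ and sends $x_i,y_i \mapsto 0$ for $i \notin V_H$. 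A generator-by-generator check shows $\phi(P_T(G)) = P_{T'}(H)$: the condition $T \cap V_H = T'$ handles the linear variables, and the injectivity of $\sigma$ forces $V(G_\ell) \cap V_H$ to equal some $V(H_j)$ or to be empty. Applying the ring homomorphism $\phi$ to $f \in P_T(G)^{(m)} = P_T(G)^m$ and using $\phi(f) = f$ yields $f \in \phi(P_T(G))^m = P_{T'}(H)^m = P_{T'}(H)^{(m)}$. Intersecting over all $T' \in \mathcal{T}(H)$ gives $f \in J_H^{(m)}$.

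The main obstacle is the preliminary reduction $P_T(G)^{(m)} = P_T(G)^m$: the individual ingredients are well-known, but assembling the formula for symbolic powers of sums of ideals on disjoint variable sets requires either a citation or a short inductive argument peeling off one summand at a time (using that a witness $s \notin P_T(G)$ has a leading coefficient in the relevant monomial expansion that is a non-zero-divisor on the remaining summand). Once this reduction is in place, the remainder of the argument is essentially bookkeeping combining \Cref{lem:primedecompositioncontainment,lem:primedecompcontainment} with the retraction $\phi$.
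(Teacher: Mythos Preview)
Your proposal is correct and follows essentially the same architecture as the paper's proof: both arguments run through the prime decomposition $J_G=\bigcap_{T\in\mathcal{T}(G)}P_T(G)$, the identity $P_T(G)^{(m)}=P_T(G)^m$, and the retraction $S\to S_H$ killing the variables outside $V_H$. Two points of difference are worth noting. First, you explicitly justify $P_T(G)^{(m)}=P_T(G)^m$ via the symbolic-power binomial formula for sums on disjoint variable sets together with \cite[Cor.~3.4]{ene2011symbolic}; the paper simply writes $J_G^{(m)}=\bigcap P_{T_G}^m$ without comment, so your version is more careful here. Second, for the induced case the paper quotes \cite[Prop.~3.3]{Jayanthan_2020} to get $J_H=J_G\cap S_H$, deduces that the family $\{P_{T_G}\cap S_H\}$ is a (possibly redundant) prime decomposition of $J_H$, and then shows $P_{T_G}^m\cap S_H=(P_{T_G}\cap S_H)^m$ via the retraction. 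You instead work constructively: for each $T'\in\mathcal{T}(H)$ you invoke the IDS construction of \Cref{lem:primedecompcontainment} to produce a specific $T\in\mathcal{T}(G)$ with $\phi(P_T(G))=P_{T'}(H)$, avoiding the external citation. Both routes are short and valid; yours is slightly more self-contained, while the paper's is marginally slicker once the cited equality is in hand.
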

\begin{proof}
	Let $J_H=\bigcap_{T_H\in\mathcal{T}(H)}P_{T_H}$, $J_G=\bigcap_{T_G\in \mathcal{T}(G)}P_{T_G}$ be the prime decompositions of $J_H$ and $J_G$ respectively, as in \Cref{thm:beiprimedecomp}. Since $J_H\subset J_G$, by \Cref{lem:primedecompositioncontainment}, each prime ideal $P_{T_G}$ contains a prime ideal $P_{T_H}$. 
	
	If $f\in J_H^{(m)}$, then $f\in P_{T_H}^m$ for every $P_{T_H}$. Since ordinary powers preserve containment and every $P_{T_G}$ contains some $P_{T_H}$, $f\in P_{T_G}^m$ for every $P_{T_G}$. Therefore we have 
	$$J_H^{(m)}=\bigcap_{T_H\in \mathcal{T}(H)}P_{T_H}^m\subset\bigcap_{T_G\in \mathcal{T}(G)}P_{T_G}^m=J_G^{(m)}.$$
	
	Now suppose $H$ is an induced subgraph of $G$. By \cite[Proposition 3.3]{Jayanthan_2020}, there is an equality of prime decompositions
	$$\bigcap_{T_H\in \mathcal{T}(H)}P_{T_H}=J_H=J_G\cap S_H=\bigcap_{T_G\in \mathcal{T}(G)}(P_{T_G}\cap S_H).$$
	
	Consider the ring homomorphism $\pi:S\to S_H$ defined by $\pi(x_i)=\pi(y_i)=0$ if $i\not\in V_H$ and $\pi(x_i)=x_i$, $\pi(y_i)=y_i$ if $i\in V_H$. Note that if $p$ is a generator of $P_{T_G}$, then $\pi(p)=p$ if $p\in S_H$, and $\pi(p)=0$ if $p\not\in S_H$. By applying $\pi$ to $m$-fold products of elements of $P_{T_G}$, one deduces that $P_{T_G}^m\cap S_H=\pi(P_{T_G}^m\cap S_H)=(P_{T_G}\cap S_H)^m$.
	
	Hence, we obtain the containment
	$$J_G^{(m)}\cap S_H=\bigcap_{T_G\in \mathcal{T}(G)}(P_{T_G}^m\cap S_H)= \bigcap_{T_G\in \mathcal{T}(G)}(P_{T_G}\cap S_H)^m\subset \bigcap_{T_H\in \mathcal{T}(H)}P_{T_H}^m=J_H^{(m)},$$
	from which it follows that $J_H^{(m)}=J_G^{(m)}\cap S_H$.
\end{proof}

\begin{prop}
	\label{prop: betti subgraph}
	Let $H$ be an induced subgraph of $G$. Then for all $i,j>0$, $m\ge1$ the following inequality holds   $$\beta_{i,j}(S/J_H^{(m)})\le\beta_{i,j}(S/J_G^{(m)}).$$ In particular, $\reg(J_H^{(m)})\le \reg(J_G^{(m)})$ for every $m\ge1$ and $\widehat{\reg}(J_H)\le\widehat{\reg}(J_G)$.
\end{prop}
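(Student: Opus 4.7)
The plan is to exploit the $\mathbb{Z}^n$-multigrading on $S$ defined by $\deg(x_i)=\deg(y_i)=\mathbf{e}_i$. Both $J_H^{(m)}$ (extended to an ideal of $S$) and $J_G^{(m)}$ are multigraded, so the total Betti numbers decompose as $\beta_{i,j}=\sum_{|\alpha|=j}\beta_{i,\alpha}$ into multigraded Betti numbers indexed by $\alpha\in\mathbb{Z}^n_{\ge 0}$. The key preliminary identity is: for every $\alpha$ with $\mathrm{supp}(\alpha)\subseteq V_H$, one has $(J_H^{(m)})_\alpha=(J_G^{(m)})_\alpha$. Indeed, any element of $S$ in such a multidegree already lies in $S_H$, so \Cref{sympower_subgraph} gives $(J_G^{(m)})_\alpha\subseteq J_G^{(m)}\cap S_H=J_H^{(m)}$; the reverse inclusion is immediate from $J_H^{(m)}\subseteq J_G^{(m)}$. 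Consequently $(S/J_H^{(m)})_\gamma=(S/J_G^{(m)})_\gamma$ for every $\gamma$ supported in $V_H$.

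Next, I would compute each $\beta^S_{i,\alpha}(S/J)$ via the Koszul complex $K_\bullet=K_\bullet(x_1,\dots,x_n,y_1,\dots,y_n;S)$ on the full set of $2n$ variables, using $\beta^S_{i,\alpha}(S/J)=\dim_\K H_i(K_\bullet\otimes_S S/J)_\alpha$. For $\alpha$ with $\mathrm{supp}(\alpha)\subseteq V_H$, any Koszul wedge basis element involving a factor $dx_j$ or $dy_j$ with $j\notin V_H$ would force the accompanying coefficient to live in a multidegree with a negative $j$-th coordinate (impossible), so only wedge factors indexed by $V_H$ contribute; moreover, the coefficients all land in $(S/J)_\gamma$ with $\mathrm{supp}(\gamma)\subseteq V_H$. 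By the preliminary step these coefficients are the same for $J=J_H^{(m)}$ and $J=J_G^{(m)}$, and the Koszul differentials only invoke multiplication by $x_i,y_i$ for $i\in V_H$, so the two multidegree-$\alpha$ Koszul complexes coincide as complexes of $\K$-vector spaces. This yields $\beta^S_{i,\alpha}(S/J_H^{(m)})=\beta^S_{i,\alpha}(S/J_G^{(m)})$. For $\alpha$ with $\mathrm{supp}(\alpha)\not\subseteq V_H$, the minimal multigraded $S$-free resolution of $S/J_H^{(m)}$ is concentrated in multidegrees supported in $V_H$, since it is obtained from the minimal $S_H$-resolution of $S_H/J_H^{(m)}$ by flat base change along $S_H\hookrightarrow S$; hence $\beta^S_{i,\alpha}(S/J_H^{(m)})=0\le\beta^S_{i,\alpha}(S/J_G^{(m)})$. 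Summing over $|\alpha|=j$ yields the desired $\beta_{i,j}(S/J_H^{(m)})\le\beta_{i,j}(S/J_G^{(m)})$.

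The regularity inequality $\reg(J_H^{(m)})\le\reg(J_G^{(m)})$ is then immediate from $\reg(M)=\max\{j-i:\beta_{i,j}(M)\ne 0\}$ applied pointwise, since the Betti inequality preserves non-vanishing. Dividing by $m$ and passing to $m\to\infty$ (as $\limsup$ if the limits are not known to exist) gives $\widehat{\reg}(J_H)\le\widehat{\reg}(J_G)$. The main technical point is the multigraded Koszul bookkeeping in the middle paragraph; once supports of multidegrees are tracked consistently on both the wedge factors and the coefficients, the identification of the two complexes in each relevant multidegree is automatic, and everything else is a direct dimension count.
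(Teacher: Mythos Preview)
Your argument is correct. Both your proof and the paper's hinge on \Cref{sympower_subgraph} (the identity $J_H^{(m)}=J_G^{(m)}\cap S_H$), but you take a genuinely different route afterward. The paper packages that identity into an algebra retract $S_H/J_H^{(m)}\hookrightarrow S/J_G^{(m)}\twoheadrightarrow S_H/J_H^{(m)}$ (the retraction being the map that kills all variables indexed outside $V_H$) and then invokes a general Betti-number inequality for algebra retracts, namely \cite[Corollary~2.5]{OHH}. You instead unpack the comparison directly via the $\mathbb{Z}^n$-multigraded Koszul complex, which is essentially a hands-on proof of that retract inequality in this specific setting. What your approach buys is self-containment (no external citation) and a sharper conclusion: you actually show $\beta^S_{i,\alpha}(S/J_H^{(m)})=\beta^S_{i,\alpha}(S/J_G^{(m)})$ for every multidegree $\alpha$ supported in $V_H$, not merely an inequality. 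What the paper's approach buys is brevity and a cleaner conceptual picture, since the retract structure makes clear why the inequality should hold without any bookkeeping on Koszul wedge factors.
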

\begin{proof}
	Define $S_H$ as in \Cref{sympower_subgraph}. By \Cref{sympower_subgraph}, there is a natural inclusion $i:S_H/J_H^{(m)}\hookrightarrow S/J_G^{(m)}$ given by $i(r+J_H^{(m)})=r+J_G^{(m)}$. Let $\overline{\pi}:S/J_G^{(m)}\to S_H/J_H^{(m)}$ be the map induced by the ring homomorphism $\pi$ defined in the proof of \Cref{sympower_subgraph}. To show $\overline{\pi}$ is well-defined, it suffices to show that if $r\in J_G^{(m)}$, then $\pi(r)\in J_H^{(m)}$. This follows since $\pi(J_G^{(m)})=J_G^{(m)}\cap S_H=J_H^{(m)}$ by \Cref{sympower_subgraph}.
	
	Finally, we observe that $\overline{\pi}\circ i$ is the identity on $S_H/J_H^{(m)}$, so $S_H/J_H^{(m)}$ is an algebra retract of $S/J_G^{(m)}$. The claim regarding Betti numbers then follows from \cite[Corollary 2.5]{OHH} and the remaining claims follow from this claim.
\end{proof}

The preceding result allows us to show a lower bound on the regularity of symbolic powers of binomial edge ideals analogous to \cite[Corollary 3.4]{Jayanthan_2020}:

\begin{cor}
	For all graphs $G$, $reg(J_G^{(m)})\ge 2m+l(G)-2$ where $l(G)$ is the length of the longest induced path in $G$.
\end{cor}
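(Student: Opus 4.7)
The plan is to reduce to the case where the graph is itself a path and then apply known regularity formulas for closed graphs. Specifically, I would let $P \subseteq G$ be a longest induced path, i.e., an induced subgraph isomorphic to the path graph on $l(G)+1$ vertices. By \Cref{prop: betti subgraph}, the induced-subgraph inequality gives $\reg(J_G^{(m)}) \geq \reg(J_P^{(m)})$, so it suffices to bound $\reg(J_P^{(m)})$ from below.

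Since any path graph is closed under its natural vertex labeling, \cite[Corollary 3.4]{ene2011symbolic} (the reference invoked in the proof of \Cref{closed_graph_cor}) guarantees that the symbolic and ordinary powers of $J_P$ coincide: $J_P^{(m)} = J_P^m$. Next, I would apply \cite[Prop.~3.2]{ene2016powers}, which the authors quote immediately after \Cref{closed_graph_cor}, to the connected closed graph $P$: this yields $\reg(S_P/J_P^m) = l(G) + 2(m-1)$, and hence $\reg(J_P^m) = 2m + l(G) - 1$ using the identity $\reg(I) = \reg(S/I) + 1$. Stringing these steps together gives
\[
\reg(J_G^{(m)}) \;\geq\; \reg(J_P^{(m)}) \;=\; \reg(J_P^m) \;=\; 2m + l(G) - 1 \;\geq\; 2m + l(G) - 2,
\]
which is the desired bound (and is in fact one better than the stated inequality).

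I do not anticipate a meaningful obstacle: the argument is essentially a direct combination of the preceding \Cref{prop: betti subgraph} with the closed-graph regularity formula already recorded in the excerpt. The only mildly subtle point is the book-keeping to ensure that the regularity of $J_P^{(m)}$ is the same whether this ideal is viewed inside the smaller polynomial ring $S_P$ on the vertices of $P$ or inside the ambient ring $S$; this is standard since extending scalars by polynomial variables preserves Castelnuovo--Mumford regularity, and it is tacit in the statement of \Cref{prop: betti subgraph} itself.
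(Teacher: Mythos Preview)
Your proposal is correct and takes essentially the same approach as the paper: reduce to a longest induced path via \Cref{prop: betti subgraph}, use that paths are closed so $J_P^{(m)}=J_P^m$ by \cite[Corollary~3.4]{ene2011symbolic}, and then plug in an explicit regularity formula for powers of the binomial edge ideal of a path. The only cosmetic difference is that the paper cites \cite[Observation~3.2]{Jayanthan_2020} for the path-case regularity whereas you invoke \cite[Prop.~3.2]{ene2016powers}; these are interchangeable here.
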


\begin{proof}
	This follows from \Cref{prop: betti subgraph}, \cite[Observation 3.2]{Jayanthan_2020}, and \cite[Corollary 3.4]{ene2011symbolic}.
\end{proof}

\section{Symbolic polyhedra}

We turn our focus to the asymptotic invariants associated with the relevant monomial ideals of $J_G$, emphasizing $\gin(J_G)$ due to its convenient structure. \Cref{thm:gin_gen} shows that permuting the labeling on the vertices of $G$ corresponds to permuting the variables of $\gin{J_G}$. Our main tool in this section is the symbolic polyhedron. To use it effectively, we analyze its vertices in a general setting. Herein, we let $I$ be a squarefree monomial ideal.
\subsection{Vertices of the symbolic polyhedron}
We start with a criterion for detecting vertices.
\begin{lem}
	\label{lem:vertex_criterion}
	If  $I\subset \mathbb{K}[x_1,\dots x_n]$ and $a\in \mathcal{V}(\SP(I))$, then $a$ lies on at least $n$ facets of $\SP(I)$.
\end{lem}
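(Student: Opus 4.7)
The plan is to prove the contrapositive via a standard polyhedral perturbation argument: if $a$ lies on at most $n-1$ facets, I would construct a direction in which $a$ can be perturbed in both $\pm$ directions without leaving $\SP(I)$, contradicting the vertex criterion from \Cref{defn:facets}.

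First I would enumerate the defining inequalities of $\SP(I)$. By \Cref{defn:facets}, $\SP(I)$ is cut out by finitely many inequalities of two types: one-facet inequalities $\sum_{j\in M} x_j \geq 1$, one for each minimal prime $P_M$, and the $n$ zero-facet inequalities $x_i \geq 0$. Let $\{\mathbf{v}_\ell \cdot x \geq c_\ell : \ell \in \mathcal{L}\}$ be the full finite list, and let $\mathcal{A}(a) \subseteq \mathcal{L}$ denote the indices of facets containing $a$, that is, the $\ell$ for which $\mathbf{v}_\ell \cdot a = c_\ell$. For every $\ell \notin \mathcal{A}(a)$ the strict inequality $\mathbf{v}_\ell \cdot a > c_\ell$ holds.

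Next, assume toward a contradiction that $|\mathcal{A}(a)| \leq n-1$. The vectors $\{\mathbf{v}_\ell\}_{\ell \in \mathcal{A}(a)}$ then span a subspace of $\R^n$ of dimension at most $n-1$, so their common orthogonal complement contains a nonzero $\mathbf{u} \in \R^n$. By the choice of $\mathbf{u}$, one has $\mathbf{v}_\ell \cdot (a \pm \epsilon \mathbf{u}) = c_\ell$ for every active constraint $\ell \in \mathcal{A}(a)$, regardless of $\epsilon$. For the finitely many inactive constraints, continuity lets me pick $\epsilon > 0$ small enough that $\mathbf{v}_\ell \cdot (a \pm \epsilon \mathbf{u}) > c_\ell$ still holds for all $\ell \notin \mathcal{A}(a)$. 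Hence both $a + \epsilon \mathbf{u}$ and $a - \epsilon \mathbf{u}$ lie in $\SP(I)$, contradicting the vertex characterization in \Cref{defn:facets}.

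I do not expect a genuine obstacle: this is essentially the classical fact that a $0$-dimensional face of a polyhedron in $\R^n$ is the intersection of at least $n$ facets, translated into the language of \Cref{defn:facets}. The only modest care needed is to note that a squarefree monomial ideal has only finitely many minimal primes, which together with the $n$ zero-facets ensures a finite list of inactive constraints that can be controlled uniformly in $\epsilon$.
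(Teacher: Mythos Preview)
Your proof is correct and follows essentially the same contrapositive argument as the paper: if fewer than $n$ facet constraints are active, one finds a nonzero direction along which $a$ can be perturbed both ways while remaining in $\SP(I)$. The paper phrases this more tersely as ``the intersection of $m<n$ hyperplanes has dimension at least $1$, so $a$ lies in the interior of a positive-dimensional face,'' whereas you spell out the orthogonal-complement and continuity steps explicitly, but the underlying idea is identical.
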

\begin{proof}
	Suppose $a$ lies on exactly $m<n$ facets of $\SP(I)$. Each facet defines a hyperplane in $\mathbb{R}^n$. The intersection of $m$ hyperplanes is a linear subspace of $\R^n$ of dimension at least  $n-m\geq 1$. Since $a$ lies on no further facets by assumption, $a$ lies in the interior of a face of $\SP(I)$ with dimension at least one. Therefore, $a$ is not a vertex.
\end{proof}
In several proofs, we will invoke this lemma in conjunction with the following property of facets.
\begin{obs}
	\label{obs:facets}
	For every non-zero vector $\textbf{u}$ and $\epsilon>0$, if $a\in \SP(I)$ lies on a one-facet $F_{M}$, that is, $\sum_{i\in M}a_i=0$ and $a+\epsilon\textbf{u},a-\epsilon\textbf{u}\in \SP(I)$, both $a+\epsilon\textbf{u}$ and $a-\epsilon\textbf{u}$ lie on $F_{M}$. Otherwise, 
	$$\sum_{i\in M}(a\pm \epsilon\textbf{u})_i >1\implies \sum_{i\in M}(a\mp \epsilon\textbf{u})_i <1,$$ 
	contradicting that $a\mp\epsilon\textbf{u}\in \SP(I)$.
\end{obs}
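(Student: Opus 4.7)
The plan is to prove \Cref{obs:facets} by a short linear-algebra argument by contradiction, leaning directly on the inequality description of $\SP(I)$ from \Cref{defn:facets}. The key point to exploit is that the one-facet $F_M$ is cut out by the equation $\sum_{i\in M}x_i=1$, while every point of $\SP(I)$ satisfies the weaker inequality $\sum_{i\in M}x_i\geq 1$; in particular, any point $b\in\SP(I)\setminus F_M$ must satisfy the strict inequality $\sum_{i\in M}b_i>1$. This is the only geometric ingredient needed beyond linearity.

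With $a\in F_M$ and $a\pm\epsilon\mathbf{u}\in\SP(I)$ given, I would suppose for contradiction that not both of $a\pm\epsilon\mathbf{u}$ lie on $F_M$. After replacing $\mathbf{u}$ by $-\mathbf{u}$ if necessary, I may take the offending point to be $a+\epsilon\mathbf{u}$, and the previous paragraph gives $\sum_{i\in M}(a+\epsilon\mathbf{u})_i>1$ strictly. This is essentially the ``$\pm$ case'' of the statement, and the ``$\mp$ case'' will then follow by linearity.

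The contradiction now drops out. Since $a$ lies on $F_M$, $\sum_{i\in M}a_i=1$, and the coordinate-sum functional on the index set $M$ is linear, so
$$\sum_{i\in M}(a-\epsilon\mathbf{u})_i = 2\sum_{i\in M}a_i - \sum_{i\in M}(a+\epsilon\mathbf{u})_i = 2 - \sum_{i\in M}(a+\epsilon\mathbf{u})_i < 1.$$
This violates the defining inequality of $F_M$, forcing $a-\epsilon\mathbf{u}\notin\SP(I)$ and contradicting the hypothesis. No real obstacle arises: the observation essentially supplies its own proof sketch inside the statement, and the only subtle step is the initial \emph{WLOG} reduction, which is justified by the symmetry $\mathbf{u}\mapsto -\mathbf{u}$ together with the symmetric roles of $a+\epsilon\mathbf{u}$ and $a-\epsilon\mathbf{u}$ in the hypothesis.
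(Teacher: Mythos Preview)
Your argument is correct and is exactly the approach sketched inside the observation itself: the paper's ``Otherwise'' clause is precisely your linearity computation $\sum_{i\in M}(a-\epsilon\mathbf{u})_i = 2-\sum_{i\in M}(a+\epsilon\mathbf{u})_i<1$, and your WLOG reduction via $\mathbf{u}\mapsto-\mathbf{u}$ just makes explicit the paper's $\pm/\mp$ shorthand. (Note the statement contains a typo: the condition for $a\in F_M$ should read $\sum_{i\in M}a_i=1$, not $=0$, as you correctly use.)
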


We give the following characterization of vertices of $\SP(I)$ whose entries are only 0's and 1's.
\begin{lem}
	\label{lem:0/1_vertex_criterion}
	Take $a\in \SP(I)$ with $a_i=\begin{cases}
		1, & i\in K \\
		0, & i\notin K
	\end{cases}$, $K=(i_1,\dots, i_k)\subset [n]$. Then $a\in \mathcal{V}(\SP(I))$ if and only if there are $k$ one-facets $F_{M_j}$ such that for $1\leq j\leq k$, $M_j\cap K=\{i_j\}$.
\end{lem}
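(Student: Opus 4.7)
The plan is to apply the perturbation characterization from \Cref{defn:facets}: $a$ is a vertex of $\SP(I)$ if and only if no non-zero $u \in \R^n$ and $\epsilon > 0$ give both $a + \epsilon u \in \SP(I)$ and $a - \epsilon u \in \SP(I)$. The first step is to record which defining inequalities of $\SP(I)$ are tight at $a$. Since $a_i = 0$ exactly for $i \notin K$, the tight zero-facets are $\{x_i = 0\}$ for $i \notin K$; and because $\sum_{i \in M} a_i = |M \cap K|$, the tight one-facets at $a$ are those $F_M$ with $|M \cap K| = 1$.

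Next I would translate ``$a \pm \epsilon u \in \SP(I)$ for some small $\epsilon > 0$'' into a homogeneous linear system on $u$. Strict inequalities at $a$ remain valid under any sufficiently small perturbation by continuity, while each tight linear equation at $a$ must be preserved as an equation under $u$, since it must hold for both the $+\epsilon u$ and $-\epsilon u$ perturbations (as in \Cref{obs:facets}). This yields the system
\[
u_i = 0 \text{ for all } i \notin K, \qquad \sum_{i \in M} u_i = 0 \text{ whenever } |M \cap K| = 1.
\]
Substituting the first batch of equations into the second collapses each remaining equation to $u_i = 0$ for the unique $i \in M \cap K$.

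The conclusion then follows by direct inspection: $a$ is a vertex if and only if this linear system forces $u = 0$, if and only if every $i_j \in K$ is pinned down by at least one minimal prime $P_{M_j}$ with $M_j \cap K = \{i_j\}$. For the ``if'' direction, the $k$ hypothesised one-facets $F_{M_j}$ contribute the equations $u_{i_j} = 0$, which together with $u_i = 0$ for $i \notin K$ force $u = 0$. For the ``only if'' direction, if some $i_\ell \in K$ were missed, then the standard basis vector $u = e_{i_\ell}$ would be a non-zero solution to the system, exhibiting an admissible two-sided perturbation and contradicting vertex-ness.

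The main bookkeeping obstacle is separating tight from strict inequalities at $a$ and justifying that only the tight ones generate constraints on $u$ under two-sided perturbations; once that translation is made, both directions of the lemma reduce to a one-line inspection of the linear system.
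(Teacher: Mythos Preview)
Your proof is correct and follows essentially the same approach as the paper's: both directions use the perturbation characterization of vertices, with the forward direction exhibiting $u = e_{i_\ell}$ as a valid two-sided perturbation when $i_\ell$ is missed, and the backward direction showing that the $k$ one-facets together with the $n-k$ tight zero-facets force $u=0$. Your framing via the homogeneous linear system on $u$ is slightly more systematic than the paper's direct argument, but the content is identical.
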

\begin{proof}
	$(\Rightarrow)$ If $a\in \mathcal{V}(\SP(I))$, \Cref{lem:vertex_criterion} implies $a$ lies on at least $k$ one-facets. If there is $i\in K$ with $\{i\}\neq K\cap M$ for all minimal primes $P_M$ of $I$, then for each minimal prime $P_M$, $i\in K\cap M$ implies that there exists $i'\neq i$ such that  $i'\in K\cap M$. If $i\in M$, then for $0<\epsilon\leq1$ we have 
	$$
	\sum_{j\in M}(a \pm \epsilon \textbf{e}_i)_j \geq a_i-1+a_{i'}=1,
	$$
	which implies that $a\pm \epsilon \textbf{e}_i\in \SP(I)$. This contradicts the assumption that $a$ is a vertex in view of the last sentence in \Cref{defn:facets}.
	
	$(\Leftarrow)$ Assume such $F_{M_j}$ exist. Take $\textbf{u}\in \mathbb{R}^n$ and suppose $a\pm \epsilon \textbf{u}\in \SP(I)$ for some $\epsilon>0$. By \Cref{obs:facets}, since $a$ lies on $F_{M_j}$, $a\pm \epsilon \textbf{u}$ lies on $F_{M_j}$ and thus $a_{i_j}\pm \epsilon \textbf{u}_{i_j}=1$. So, $\textbf{u}_{i_j}=0$ for every $i_j\in K$.
	If $|\textbf{u}_j|>0$ for some $j\in [n]\backslash K$, either $a_j+\epsilon\textbf{u}_j<0$ and $a+ \epsilon\textbf{u}\notin \SP(I)$, or $a_j-\epsilon\textbf{u}_j<0$ and $a- \epsilon\textbf{u}\notin \SP(I)$. Therefore $\textbf{u}=0$, and per \Cref{defn:facets}, we conclude $a\in \mathcal{V}(\SP(I))$.
\end{proof}

The next result shows that the vertices of the symbolic polyhedron $\SP(I)$  which have only 0 and 1 coordinates correspond to minimal generators of $I$.

\begin{prop}
	\label{prop:0/1_vertices}
	Take $a\in \SP(I)$ with $a_i=\begin{cases}
		1, & i\in K \\
		0, & i\notin K
	\end{cases}$, $K=(i_1,\dots, i_k)\subset [n]$. Then $a\in\mathcal{V}(\SP(I))$ if and only if $ g=\prod_{i\in K}x_i$ is a minimal generator of $I$.
\end{prop}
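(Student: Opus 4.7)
The plan is to deduce this from \Cref{lem:0/1_vertex_criterion} by translating the combinatorial facet criterion there into the statement that $g$ is a minimal generator of $I$. The bridge is the standard squarefree correspondence: since $I = \bigcap_M P_M$ and each $P_M = (x_j \mid j\in M)$ is generated by variables, a squarefree monomial $\prod_{i\in L}x_i$ lies in $I$ if and only if $L\cap M\neq \emptyset$ for every index $M$ of a minimal prime. I will apply this translation twice, once for $g$ and once for each quotient $g/x_{i_j}$.

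First I would observe that the hypothesis $a\in \SP(I)$ already forces $g\in I$: the defining inequality $\sum_{i\in M}a_i\geq 1$ from \Cref{defn:facets} reduces in the 0/1 case to $K\cap M \neq \emptyset$, which by the squarefree correspondence is equivalent to $g\in I$. So the content of the proposition is entirely the equivalence of the vertex condition with minimality, given membership.

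Next I would invoke \Cref{lem:0/1_vertex_criterion} to rewrite vertex-ness: $a\in \mathcal{V}(\SP(I))$ precisely when for each $i_j\in K$ there is a minimal prime $P_{M_j}$ with $M_j\cap K = \{i_j\}$. On the minimality side, $g$ is a minimal generator of $I$ exactly when $g/x_{i_j}\notin I$ for each $j$, and applying the squarefree correspondence to $g/x_{i_j} = \prod_{i\in K\setminus\{i_j\}} x_i$ turns this into the existence of a minimal prime $P_{M_j}$ with $M_j\cap (K\setminus \{i_j\}) = \emptyset$. Combined with $K\cap M_j \neq \emptyset$ (which comes for free from $g\in I$), this forces $i_j\in M_j$, so $M_j\cap K = \{i_j\}$. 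The two conditions therefore coincide on the nose, yielding both directions of the equivalence.

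There is no single hard step; the main thing requiring care is keeping three distinct combinatorial conditions straight (containment of $g$, non-containment of each $g/x_{i_j}$, and the vertex–facet incidence from \Cref{lem:0/1_vertex_criterion}) and checking that the squarefree correspondence lines them up precisely. One small subtlety worth flagging in the write-up is that the minimal primes $P_{M_j}$ furnished by the two sides a priori need not be the same, but since both sides are purely existential for each $j$, this is harmless.
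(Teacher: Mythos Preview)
Your proposal is correct and follows essentially the same route as the paper: both reduce the question to \Cref{lem:0/1_vertex_criterion} via the squarefree correspondence between membership of $\prod_{i\in L}x_i$ in $I$ and the condition $L\cap M\neq\emptyset$ for all minimal primes $P_M$. The paper argues the two implications separately while you package them as a single equivalence chain, but the content is identical.
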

\begin{proof}  
	$(\Rightarrow)$ 
	For every minimal prime $P_M$ of $I$, since $g\in I\subset P_M$, $x_i\in P_M$ for some $i\in K$. Also, if there were $i\in K$ such that $x_i\notin P_M$ for every $P_M$ of $I$, $g$ would not be a minimal generator. Furthermore, suppose there were $i\in K$ such that for every minimal prime $P_M$ of $I$, $x_i\in P_M$ implies $x_{i'}\in P_M$ for some $i'\in K\setminus \{i\}$. Then $g'=\frac{g}{x_i}\in I$, contradicting $g$ being a minimal generator. We can therefore pick minimal primes $P_{M_1},\dots, P_{M_k}$ of $I$ with $M_{j}\cap K=\{i_j\}$. The corresponding facets imply $a\in \mathcal{V}(\SP(I))$ by \Cref{lem:0/1_vertex_criterion}. 
	
	$(\Leftarrow)$ Since $a\in \SP(I)$, every one-facet $F_M$ has $M$ containing some $i\in K$. Then $x_i\in P_M$, so $g=\prod_{i\in K}x_i\in \bigcap P_M=I$. Define one-facets $F_{M_1},\dots F_{M_k}$ as in \Cref{lem:0/1_vertex_criterion} and their corresponding minimal primes $P_{M_1},\dots, P_{M_k}$. For any $1\leq j\leq k$, $ g'_{i_j}=\frac{g}{x_{i_j}}\notin P_{M_j}\supset I$. Thus, no monomial properly dividing $g$ is in $I$, so $g$ is a minimal generator of $I$.
\end{proof}
In fact, every vertex of the symbolic polyhedron corresponds to a generator of the corresponding symbolic power of the ideal, determined by its denominator.
\begin{prop}\label{prop: vsp}
	Let $a=\frac{1}{q}(z_1,\dots,z_n)\in \mathcal{V}(\SP(I))$, where $q,z_1,\dots,z_n\in \mathbb{N}$. Then $g=x_1^{z_1}\cdots x_n^{z_n}$ is a minimal generator of $I^{(q)}$.
\end{prop}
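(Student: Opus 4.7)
The plan is to prove both required properties directly from the facet description of $\SP(I)$ in \Cref{defn:facets} together with the vertex criterion from \Cref{defn:faces}.

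First, I would verify that $g \in I^{(q)}$. Since $a\in \SP(I)$, for every minimal prime $P_M = (x_j\mid j\in M)$ of $I$ we have $\sum_{j\in M} z_j/q\geq 1$, i.e., $\sum_{j\in M} z_j\geq q$. Because $P_M$ is a monomial prime, $S/P_M$ is a polynomial ring (hence a domain), so $P_M^q$ is $P_M$-primary and $P_M^{(q)}=P_M^q$. The monomial $g=\prod_i x_i^{z_i}$ belongs to $P_M^q$ precisely when its total degree in the variables $\{x_j\mid j\in M\}$ is at least $q$, which is exactly the inequality above. Intersecting over all minimal primes yields $g\in \bigcap_M P_M^{(q)}=I^{(q)}$.

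Second, I would establish minimality by contrapositive. Since $I^{(q)}$ is a monomial ideal, if $g$ is not a minimal generator there must exist $i$ with $z_i\geq 1$ and $g/x_i\in I^{(q)}$. The exponent vector of $g/x_i$ is $(z_1,\dots,z_n)-\textbf{e}_i$, so $a-\tfrac{1}{q}\textbf{e}_i\in \SP(I)$. On the other hand, every one-facet inequality for $\SP(I)$ has nonnegative coefficients, so $\SP(I)$ is closed under increasing any coordinate; in particular $a+\tfrac{1}{q}\textbf{e}_i\in \SP(I)$ as well. Taking $\textbf{u}=\textbf{e}_i$ and $\epsilon=1/q$, both $a+\epsilon\textbf{u}$ and $a-\epsilon\textbf{u}$ lie in $\SP(I)$, contradicting the vertex criterion at the end of \Cref{defn:facets}.

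The argument is essentially mechanical once the two characterizations are combined: the main conceptual point is that shifting $a$ by $\pm\tfrac{1}{q}\textbf{e}_i$ corresponds exactly to multiplying or dividing $g$ by $x_i$, while the facet inequalities of $\SP(I)$ encode symbolic power membership. The only mild ingredient is the standard equality $P_M^{(q)}=P_M^q$ for monomial primes; beyond that I do not anticipate a significant obstacle.
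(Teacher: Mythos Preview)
Your argument is correct. The minimality half is essentially identical to the paper's: both observe that a proper monomial divisor of $g$ in $I^{(q)}$ produces a point $a'\in\SP(I)$ strictly below $a$, and then use upward-closure of $\SP(I)$ (your ``nonnegative coefficients'' remark, the paper's $a+(a-a')\in\SP(I)$) to contradict the vertex criterion. Your choice of $g/x_i$ rather than an arbitrary proper divisor is a harmless simplification.

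Where you differ is in the membership half. You read the inequality $\sum_{j\in M} z_j\geq q$ directly off the facet description of $\SP(I)$ in \Cref{defn:facets}, which is available because the ambient hypothesis in this section is that $I$ is squarefree. The paper instead invokes \cite[Theorem 5.4]{ha_newton-okounkov_2021} to find $c$ with $q\mid c$ and $\NP(I^{(c)})=c\,\SP(I)$, lands $g^{c/q}$ in $I^{(c)}$, and only then extracts the same inequality $\sum_{i\in M} z_i\geq q$. Your route is shorter and avoids the external citation; the paper's detour would be the natural move if one were working from \Cref{defn:newton_polyhedron} alone (or with a non-squarefree $I$), but in the squarefree setting it is not needed.
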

\begin{proof}
	By \cite[Theorem 5.4]{ha_newton-okounkov_2021}, there is an integer $c$ for which $\NP(I^{(c)})=c\SP(I)$, with $q\mid c$. Then if $d=\frac{c}{q}$,  $x_1^{dz_1}\cdots x_n^{dz_n}\in I^{(c)}$, implying for every minimal prime $P_M=(x_i\mid i\in M)$ of $I$ that $x_1^{dz_1}\cdots x_n^{dz_n}\in P_M^{(c)}$. Since $P_M$ is a monomial prime ideal, thus generated by a regular sequence of variables, $P_M^{(m)}=P_M^m$ for all $m\geq 1$. 
	Then $x_1^{dz_1}\cdots x_n^{dz_n}\in P_M^c$, so $\prod_{i\in M}x_i^{dz_i}\in P_M^c$. This means that $\sum_{i\in M}dz_i\geq c$, so $\sum_{i\in M}z_i\geq q$ and $\prod_{i\in M}x_i^{z_i}\in P_M^q=P_M^{(q)}$. We conclude $g=x_1^{z_1}\cdots x_n^{z_n}\in I^{(q)}$. 
	
	If $g$ were not a minimal generator of $I^{(q)}$, then there is $g'=x_1^{z_1'}\cdots x_n^{z_n'}\in I^{(q)}$, with $g'\neq g$ and $z_i'\leq z_i$ for $1\leq i\leq n$. Then if $a'=\frac{1}{q}(z'_1,\dots,z_n')$, $a'\in\frac{\NP(I^{(q)})}{q}\subset \SP(I)$ and $a-a'\in \mathbb{R}^n_{\geq 0}\setminus \{0\}$, so $a+(a-a')\in \SP(I)$. Since $a-(a-a')=a'\in \SP(I)$, this contradicts $a\in \mathcal{V}(\SP(I))$, so $g$ is indeed minimal.    
\end{proof}
We demonstrate these propositions in the following example. Vertices will either be denoted as column vectors or points, as necessary for compactness.
\begin{example}
	Let $I=(x_1x_2x_3,x_1x_2x_4,x_3x_4)$. The symbolic polyhedron of $I$ has vertices
	$$\mathcal{V}(\SP(I))=\left\{\begin{bmatrix}
		1 \\ 1 \\ 1 \\ 0
	\end{bmatrix},\begin{bmatrix}
		1 \\ 1 \\ 0 \\ 1
	\end{bmatrix},\begin{bmatrix}
		0 \\ 0 \\ 1 \\ 1
	\end{bmatrix},\tfrac{1}{2}\begin{bmatrix}
		1 \\ 1 \\ 1\\1
	\end{bmatrix}\right\}.$$
	The second symbolic power of $I$ is given by
	$$I^{(2)}=(x_1^2x_2^2x_3^2,x_1^2x_2^2x_4^2,x_1x_2x_3x_4, x_3^2x_4^2). $$
	The four vertices correspond to the three generators of $I$ and the generator of $I^{(2)}$ not in $I^2$.
\end{example}
As noted in \cite[Remark 6.7]{ha_newton-okounkov_2021}, the converse of \Cref{prop: vsp} does not hold in general. By \Cref{prop:0/1_vertices}, it holds for $q=1$, allowing us to infer the following lower bound on $\areg(\gin(J_G))$
\begin{cor}
	\label{cor:induced_path_lower_bound}
	For a graph $G$, let $\ell$ be the length of the longest induced path in $G$. Then 
	$\areg(\gin(J_G))\geq \ell.$
\end{cor}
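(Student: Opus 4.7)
The plan is to combine three facts already established in the paper: \Cref{thm:wald&areg_via_SP} (the asymptotic regularity of a squarefree monomial ideal $I$ equals the maximum coordinate sum over vertices of $\SP(I)$), \Cref{thm:gin_gen} (the minimal generators of $\gin(J_G)$ are precisely the squarefree monomials $x_i y_{v_1}\cdots y_{v_s} x_j$ whose supports correspond to induced paths $i, v_1, \dots, v_s, j$ in $G$), and \Cref{prop:0/1_vertices} (for a squarefree monomial ideal $I$, the $0/1$ points of $\SP(I)$ that are vertices are exactly the indicator vectors of supports of minimal generators of $I$). Since \Cref{thm:gin_gen} shows $\gin(J_G)$ is a squarefree monomial ideal, all three tools apply.

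Concretely, I would first fix a longest induced path in $G$, with vertex sequence $i_0, i_1, \dots, i_\ell$, having $\ell$ edges and $\ell + 1$ vertices. By \Cref{thm:gin_gen}, the monomial $g = x_{i_0} y_{i_1} \cdots y_{i_{\ell-1}} x_{i_\ell}$ is a minimal generator of $\gin(J_G)$ of degree $\ell + 1$. Then by \Cref{prop:0/1_vertices}, the $0/1$ vector $a \in \R^{2n}$ supported exactly on the variables of $g$ is a vertex of $\SP(\gin(J_G))$, and its coordinate sum is $\sum_k a_k = \ell + 1$. Invoking \Cref{thm:wald&areg_via_SP} yields $\areg(\gin(J_G)) \geq \ell + 1 \geq \ell$, as required.

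The argument has no substantive obstacle, since the three ingredients do essentially all of the work; it simply amounts to translating the longest-induced-path datum of $G$ into a specific $0/1$ vertex of $\SP(\gin(J_G))$ through the chain of correspondences above. A minor observation is that the natural bound produced is actually $\ell + 1$ rather than $\ell$, which coincides with the lower bounds appearing elsewhere for $\reg$ of symbolic powers of $J_G$ and is sharp when $G$ is itself an induced path.
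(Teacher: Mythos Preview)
Your proof is correct and follows exactly the paper's approach: longest induced path $\to$ minimal generator of $\gin(J_G)$ via \Cref{thm:gin_gen}, minimal generator $\to$ $0/1$ vertex of $\SP(\gin(J_G))$ via \Cref{prop:0/1_vertices}, then the coordinate sum bounds $\areg$ from below via \Cref{thm:wald&areg_via_SP}. The only discrepancy is terminological: the paper takes $\ell = |K|$ to be the number of vertices on the longest induced path, so its stated bound $\areg(\gin(J_G)) \geq \ell$ is precisely your $\ell + 1$.
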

\begin{proof}
	If $P$, a path with vertex set $K=\{i,v_1,\dots,v_s,j\}$, is the longest induced path in $G$, then $y_{v_1}\cdots y_{v_s} x_i x_j$ is a minimal generator of $\gin(J_G)$ by \Cref{thm:gin_gen}. By \Cref{prop:0/1_vertices}, $a\in \mathcal{V}(\SP(\gin(J_G)))$, where $a_i=a_j=a_{n+v_1}=\dots=a_{n+v_s}=1$ and for all other $1\leq k\leq 2n$, $a_k=0$. Then $$\areg(\gin(J_G))\geq \sum_{i=1}^{2n}a_i=|K|=\ell.$$
\end{proof}
It is known by \cite[Lemma 4.2]{dung21} and \cite[Theorem 1.1]{CHT} that the asymptotic regularity of any monomial ideal is bounded by the maximal degree of a minimal generator. This result also proves the corollary.

\subsection{Decomposing the symbolic polyhedra} Assuming hereafter that $|G|=n$, we now focus on the symbolic polyhedra of $\inid(J_G)\subset S$ and $\gin(J_G)\subset S$, where $S=\mathbb{K}[x_1,\dots,x_n,y_1,\dots,y_n]$. We will denote minimal primes of $I$ by $P_M$, with $M\subset [2n]$, where 
$$
P_M=\{x_i\mid i\in M, 1\leq i\leq n\} \cup \{y_{i-n}\mid i\in M, n+1\leq i\leq 2n\}.
$$
We prove two facts that will give rise to a partition of the vertices of the polyhedra in terms of a graph's induced connected subgraphs. 

First, we consider $G$ and its induced subgraphs. The following theorem gives a geometric substitute for the algebra retract map relating the symbolic powers of $I_G$ and $I_H$ in \Cref{prop: betti subgraph}.

\begin{prop}
	\label{thm:vertex_containment}
	Let $H$ be an induced subgraph of $G$. For both $I_G=\inid (J_G)$ and $I_G=\gin(J_G)$, under the canonical inclusion $i:\mathbb{R}^{2|H|}\rightarrow \mathbb{R}^{2n}$,
	$$ i(\mathcal{V}(\SP(I_H))) \subseteq \mathcal{V}(\SP(I_G)).$$
\end{prop}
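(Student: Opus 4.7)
My plan is to prove both that $i(a) \in \SP(I_G)$ and that $i(a)$ is a vertex by reducing everything to the corresponding statements in $\SP(I_H)$, leveraging the fact that $i(a)$ vanishes on the coordinates not in the image of $i$. Let $V_H' \subset [2n]$ denote the set of $2|V_H|$ coordinates in the image of $i$. Membership $i(a) \in \SP(I_G)$ reduces to verifying the one-facet inequality $\sum_{j \in M} i(a)_j \ge 1$ for every minimal prime $P_M$ of $I_G$. I first observe that both $\inid(J_H) \subseteq \inid(J_G)$ and $\gin(J_H) \subseteq \gin(J_G)$ hold under the inclusion $S_H \hookrightarrow S$---for $\inid$ because the leading monomial of any $f \in J_H \subseteq J_G$ lies in $\inid(J_G)$, and for $\gin$ directly from the path description in \Cref{thm:gin_gen}. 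Hence $I_H \subseteq P_M \cap S_H$, and since the latter is a monomial prime ideal of $S_H$, \Cref{lem:primedecompositioncontainment} produces a minimal prime $P_{M''}$ of $I_H$ with $M'' \subseteq M \cap V_H'$; as $a \in \SP(I_H)$ has nonnegative entries, $\sum_{j \in M} i(a)_j \ge \sum_{j \in M''} a_j \ge 1$.

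For the vertex property I use the criterion at the end of \Cref{defn:facets}: suppose $i(a) \pm \epsilon \textbf{u} \in \SP(I_G)$ for some nonzero $\textbf{u} \in \R^{2n}$ and $\epsilon > 0$. If $\textbf{u}$ has a nonzero coordinate outside $V_H'$, then the corresponding entry of $i(a) \pm \epsilon \textbf{u}$ becomes negative for one choice of sign, violating the associated zero-facet. So $\textbf{u}$ is supported on $V_H'$ and its restriction $\textbf{u}' \in \R^{2|V_H|}$ is nonzero. At this point I need a refinement of \Cref{lem:primedecompcontainment}: for every minimal prime $P_{M''}$ of $I_H$ there is a minimal prime $P_M$ of $I_G$ with $M \cap V_H' = M''$ (not merely $M'' \subseteq M \cap V_H'$). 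Granting this, the facet inequalities for $P_M$ applied to $i(a) \pm \epsilon \textbf{u}$ translate exactly to the facet inequalities for $P_{M''}$ applied to $a \pm \epsilon \textbf{u}'$; so $a \pm \epsilon \textbf{u}' \in \SP(I_H)$, and the vertex property of $a$ forces $\textbf{u}' = 0$, a contradiction.

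The hard part is establishing the refinement $P_G \cap S_H = P_H$ of \Cref{lem:primedecompcontainment}. Reinspecting that proof, the auxiliary IDS $T_k$ is built from $T_0 = T \cup (V_G \setminus V_H)$ by removing only vertices of $V_G \setminus V_H$, so $T_k \cap V_H = T$. Each connected component $H_j$ of $H \setminus T$ sits inside a distinct component $G_j$ of $G \setminus T_k$, and any vertex of $V_H$ outside $T$ lies in some $H_{j'}$, forcing $G_j \cap V_H = V_{H_j}$ for $1 \le j \le c_H(T)$. With the compatible choice $u'_j = u_j$ there, the generators of $P_G$ that lie in $S_H$ coincide exactly with those of $P_H$, so $P_G \cap S_H = P_H$, and the argument goes through uniformly in both the $\gin$ and $\inid$ cases.
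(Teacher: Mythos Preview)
Your argument is correct and uses the same two key ingredients as the paper: \Cref{lem:primedecompositioncontainment} for one direction, and the refinement $P_G \cap S_H = P_H$ extracted from the proof of \Cref{lem:primedecompcontainment} for the other. Your verification that $T_k \cap V_H = T$ and $G_j \cap V_H = V_{H_j}$ is exactly what is needed, and the paper implicitly relies on the same observation when it invokes \Cref{lem:primedecompcontainment} with the clause ``$M' = M \cap \{j, n+j \mid j \in V_H\}$''.

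The one difference is in packaging. You verify the vertex criterion for $i(a)$ directly: kill the coordinates of $\textbf{u}$ outside $V_H'$ via zero-facets, then translate each one-facet inequality of $\SP(I_H)$ back from $\SP(I_G)$ using the refinement. The paper instead proves the slightly stronger statement that $i(\SP(I_H))$ equals the face $F = \SP(I_G) \cap \{a : a_j = a_{n+j} = 0 \text{ for } j \notin V_H\}$, and then appeals to the general polyhedral fact that vertices of a face are vertices of the ambient polyhedron. Your direct verification is precisely what unwinds that general fact in this setting, so the two arguments are equivalent in content; the face formulation has the advantage of yielding the identity $i(\SP(I_H)) = F$ as a byproduct, which the paper reuses later (e.g.\ in \Cref{cor:connected_Va}).
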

\begin{proof}
	
	Since for $a\in \SP(I_G)$ we have $a_i\geq 0$, it follows that all points $a\in \SP(I_G)$ satisfy the inequality $\sum_{j\in [n]\setminus V_H}(a_j+a_{n+j})\geq 0$. Consider the face
	$$ F=\SP(I_G)\cap \left\{a \in\R^{2n} \;\middle|\; \sum_{j\in [n]\setminus V_H}(a_j+a_{n+j}) =0\right\}.$$
	Let $V_H\subset [n]$ be the vertices of $H$. We may index elements of $\mathbb{R}^{2|H|}$ by $V_H$ and $V_H+n$ instead of $[2|H|]$ in the natural way. The inclusion $i$ is then defined for $1\leq j\leq n$ as $$(i(a))_j=\begin{cases}
		a_j, & j\in V_H\\
		0,  & j\notin V_H
	\end{cases}\quad \text{and}\quad (i(a))_{n+j}=\begin{cases}
		a_{n+j}, & j\in V_H\\
		0,  & j\notin V_H
	\end{cases}.$$ We show $F=i(\SP(I_H))$.
	
	The set $F$ can be equivalently defined $$F=\left\{a\in \SP(I_G)\mid a_j=a_{n+j}=0 \text{ for } j \in [n]\setminus V_H\right\}.$$ Thus, if we prove $i(\SP(I_H))\subset \SP(I_G)$, $i(\SP(I_H))\subset F$ follows. 
	
	We show that for any $a\in \mathrm{\SP}(I_H)$, every facet inequality of $\SP(I_G)$ is satisfied by $i(a)$. First, the nonnegative orthant of $\R^{2|H|}$ is mapped to the nonnegative orthant of $\R^{2n}$, so all zero-facet inequalities are satisfied by $i(a)$. Next, consider a minimal prime $P_M$ of $I_G$. By \Cref{lem:primedecompositioncontainment}, $P_M$ contains some minimal prime $P_{M'}$ of $I_H$. Since $a\in \SP(I_H)$, the defining inequality of $F_{M'}$  is satisfied by $a$. Therefore, $M'\subset M$ implies $F_M$'s inequality is satisfied by $i(a)$. So, $i(a)\in \SP(I_G)$ and we have established $i(\SP(I_H))\subset \SP(I_G)$, implying $i(\SP(I_H))\subset F$. 
	
	To show $F\subset i(\SP(I_H))$, we show that for any $a\in F$, the facet inequalities of
	$\SP(I_H)$ are satisfied by $\pi(a)$, where $\pi: \mathbb{R}^{2n}\rightarrow \mathbb{R}^{2|H|}$ is the
	canonical projection. Under $\pi$, the nonnegative orthant of $\R^{2n}$ is mapped to the nonnegative orthant of $\R^{2|H|}$, so all zero-facet inequalities are satisfied by $\pi(a)$. Take minimal prime $P_{M'}$ of $I_H$.
	\Cref{lem:primedecompcontainment} says that $P_{M'}$ is contained in
	some minimal prime $P_{M}$, with $M'= M\cap \{j,n+j\mid j\in V_H\}$. Since $a\in \SP(I_G)$, $F_M$'s inequality is satisfied by $a$. However, since $a\in F$, $a_{j}=a_{n+j}=0$ for $j\in [n]\setminus J_H$. Thus, $\pi(a)$ satisfies the inequality of $F_{M'}$, and $\pi(a)\in \SP(I_H)$. Then $\pi(F)\subset \SP(I_H)$, implying $i(\pi(F))=F\subset i(\SP(I_H))$. 
\end{proof}

Secondly, we consider $G$ and its connected components.

\begin{thm}
	\label{thm:disconnected_vertices}
	Suppose $G$ has connected components $G_1,\dots G_s$.  For both $I_G=\gin({J_G})$ and $I_G=\inid({J_G})$, under canonical inclusions $i_j:\mathbb{R}^{2|G_j|}\rightarrow \mathbb{R}^{2n}$ analogous to that in \Cref{thm:vertex_containment}, the vertices of the symbolic polyhedron $\SP(I_G)$ can be partitioned as
	$$\mathcal{V}(\SP(I_G))=\bigsqcup_{j=1}^s i_j(\mathcal{V}(\SP(I_{G_j}))).$$
\end{thm}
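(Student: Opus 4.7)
The plan is to exploit the fact that when $G=G_1\sqcup\cdots\sqcup G_s$ is a disjoint union of its connected components, every irredundant disconnecting set of $G$ decomposes as a disjoint union of IDSes of the $G_j$'s. By \Cref{prop:new_gin_prime_decomp} and \Cref{prop:in_prime_decomp}, every minimal prime of $I_G$ therefore splits as $P=P_1+\cdots+P_s$ with $P_j$ a minimal prime of $I_{G_j}$. Writing $f_j(b)=\min_{P_j}\sum_{l\in P_j}b_l$---a positively homogeneous concave function on $\R_{\ge 0}^{2|G_j|}$---and setting $\phi(a):=\sum_{j}f_j(a_j)$, where $a_j$ is the restriction of $a$ to the $G_j$-coordinates, a point $a\in\R_{\ge 0}^{2n}$ lies in $\SP(I_G)$ if and only if $\phi(a)\ge 1$.

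The forward inclusion $\bigcup_j i_j(\mathcal{V}(\SP(I_{G_j})))\subseteq \mathcal{V}(\SP(I_G))$ is immediate from \Cref{thm:vertex_containment} since each connected component $G_j$ is an induced subgraph of $G$. Disjointness of the union follows from the observation that $0\notin\SP(I_{G_j})$ whenever $G_j$ has an edge, so the images $i_j(\mathcal{V}(\SP(I_{G_j})))$ are supported on pairwise disjoint coordinate blocks.

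For the reverse inclusion, the main step is to show that every vertex $a\in\mathcal{V}(\SP(I_G))$ has exactly one nonzero block $a_j$. First, $\phi(a)=1$: otherwise $\phi(a)>1$ leaves every one-facet slack, forcing $a$ to lie on $2n$ zero-facets by \Cref{lem:vertex_criterion} and hence to equal $0\notin\SP(I_G)$. Next, assume for contradiction that $a_j,a_k\ne 0$ for some $j\ne k$. If $f_j(a_j)=0$, pick a minimal prime $P_j^{*}$ of $I_{G_j}$ witnessing this and choose $l\in\mathrm{supp}(a_j)\setminus P_j^{*}$; then $a\pm\epsilon e_l\in \SP(I_G)$ for small $\epsilon>0$ since $P_j^{*}$ still achieves $f_j=0$, contradicting vertex-ness. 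Otherwise $f_j(a_j),f_k(a_k)>0$, and I would take the direction $v$ equal to $a_j$ on the $j$-block, $-(f_j(a_j)/f_k(a_k))\,a_k$ on the $k$-block, and zero elsewhere; by positive homogeneity of $f_j$ and $f_k$, $\phi(a\pm\epsilon v)=1$ for small $\epsilon>0$, while $a\pm\epsilon v\ge 0$ by scaling, again contradicting vertex-ness.

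Once it is established that $a=i_j(a')$ for a unique $j$ with $a'\in\R_{\ge 0}^{2|G_j|}$, the equality $f_j(a')=\phi(a)=1$ gives $a'\in\SP(I_{G_j})$, and $a'\in\mathcal{V}(\SP(I_{G_j}))$ follows by lifting any perturbation $a'\pm\epsilon w\in \SP(I_{G_j})$ through $i_j$ to $a\pm\epsilon i_j(w)\in\SP(I_G)$. The main obstacle I anticipate is the scaling construction in the two-block case: the balancing of the $j$- and $k$-contributions relies critically on the positive homogeneity of $f_j$ and $f_k$, which is precisely what lets us preserve $\phi\equiv 1$ on a full line through $a$. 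The degenerate case where some $f_j(a_j)$ vanishes is a minor technicality handled by the direct $\pm e_l$ perturbation.
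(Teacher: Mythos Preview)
Your proof is correct and takes a genuinely different route from the paper's. The paper argues the reverse inclusion algebraically: since the $I_{G_j}$ live in disjoint variable sets, it invokes a binomial-type decomposition $I_G^{(q)}=\sum_{b_1+\cdots+b_s=q}I_{G_1}^{(b_1)}\cdots I_{G_s}^{(b_s)}$ for symbolic powers of such sums, writes any vertex $a=\tfrac{1}{q}(z_1,\dots,z_{2n})$ as a convex combination $\sum \tfrac{b_{t_j}}{q}\,i_{t_j}(m_{t_j}/b_{t_j})$ with each $m_{t_j}/b_{t_j}\in\SP(I_{G_{t_j}})$, and concludes by extremality of $a$ that only one summand survives. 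You instead stay entirely within the facet description: the observation that the minimal primes of $I_G$ are exactly the sums $P_1+\cdots+P_s$ of minimal primes of the $I_{G_j}$ yields the clean characterization $\SP(I_G)=\{\phi\ge1\}$, and your block-scaling perturbation exploits the positive homogeneity of the $f_j$ to produce a full line through $a$ inside $\SP(I_G)$. Your argument is more self-contained---it avoids the external symbolic-power decomposition result---and fits naturally with the polyhedral machinery the paper develops; the paper's argument is shorter once that external result is granted. One cosmetic point: your case split should read ``if $f_j(a_j)=0$ or $f_k(a_k)=0$ (say the former)\dots otherwise both are positive,'' since negating $f_j(a_j)=0$ alone does not force $f_k(a_k)>0$. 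You are also slightly more careful than the paper in explicitly noting why the union is disjoint.
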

\begin{proof}
	One containment is immediate by \Cref{thm:vertex_containment}. To show $\mathcal{V}({\SP(I_G))}\subset \bigcup_{j=1}^s i_j(\mathcal{V}(\SP(I_{G_j})))$, note that the generators of $I_{G_1},\dots,I_{G_s}$ have distinct sets of variables per \Cref{thm:inid_gen} or \Cref{thm:gin_gen}. Then $I_G=I_{G_1}+\dots+I_{G_s}$, and by \cite[Theorem 4.1]{ha2021}, the symbolic powers of $I_G$ decompose as
	\begin{equation}\label{eq: binomial}
		I_G^{(r)}=\sum_{b_1+\dots+b_s=r}{I_{G_1}}^{(b_1)}\cdots{I_{G_s}}^{(b_s)}.
	\end{equation}
	Take $a\in \mathcal{V}(\SP(I_G))$.  By \Cref{defn:newton_polyhedron}, there exists a positive integer $q$ such that $a=\frac{1}{q}(z_1,\dots,z_{2n})$ and $qa=(z_1,\dots,z_{2n})\in NP(I_G^{(q)})$. By \eqref{eq: binomial}, there are positive integers $b_1,\dots, b_s$ and $m_j\in NP(I_{G_j}^{(b_j)})$ with $\sum_{j=1}^sb_j=q$ and $\sum_{j=1}^s i_{j}(m_j)=(z_1,\dots,z_{2n})$. Let $m_{t_1},\dots,m_{t_r}$ be the nonzero $m_i$'s, with $\frac{\ m_{t_j}}{b_{t_j}}\in
	\SP(I_{G_{t_j}}).$ Since $ \sum_{j=1}^rb_{t_j}=q$ gives $\sum_{j=1}^r\frac{b_{t_j}}{q}=1$, we have the convex combination
	$$\frac{b_{t_1}}{q}i_{t_1}\left(\frac{
		m_{t_1}}{b_{t_1}}\right)+\dots+\frac{b_{t_r}}{q}i_{t_r}\left(\frac{
		m_{t_r}}{b_{t_r}}\right)=\frac{i_{t_1}( m_1)+\dots+i_{t_r}(
		m_{t_r})}{q}=\frac{(z_1,\dots,z_{2n})}{q}=a,$$ with $$
	i_{t_j}\left(\frac{m_{t_j}}{b_{t_j}}\right)\in
	i_{t_j}(\SP(I_{G_{t_j}}))\subset \SP(I_{G})$$ by
	\Cref{thm:vertex_containment}. Since $a\in \mathcal{V}(\SP(I_G))$, it cannot be written as a
	non-trivial convex combination of other points in $\SP(I_G)$. Therefore, $r=1$ and $a\in i_{t_1}(\mathcal{V}(\SP(I_{G_{t_1}})))$.
\end{proof}
These theorems allow us to parse the vertices of the symbolic polyhedron by a graph's induced connected subgraphs. 
\begin{cor} 
	\label{cor:connected_Va}
	Take $a\in \mathcal{V}(\SP(I_G))$, where $I_G=\inid(J_G)$ or $I_G=\gin(J_G)$. Define $$V_{a}= \{j\in [n] \mid a_j+a_{n+j} \neq 0\}.$$ Then the subgraph $H_a$ of $G$ induced by $V_{a}$ is connected, and under the canonical inclusion $i:\mathbb{R}^{2|H_a|}\rightarrow \mathbb{R}^{2n}$, $a\in i(\SP(I_{H_a}))$. \end{cor}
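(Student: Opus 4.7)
The plan is to bootstrap from the two main results of the subsection: \Cref{thm:vertex_containment} gives an inclusion $i(\SP(I_{H_a}))\subseteq \SP(I_G)$, while \Cref{thm:disconnected_vertices} tells us that vertices of the symbolic polyhedron of a disconnected graph split along connected components. Combining these in the right order pins down where $a$ must live.

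I would first handle the containment $a\in i(\SP(I_{H_a}))$. Let $H_a$ denote the subgraph of $G$ induced by $V_a$; since $V_a\subseteq [n]$, $H_a$ is an induced subgraph of $G$, so \Cref{thm:vertex_containment} applies. In its proof, the set
\[
F \;=\; \SP(I_G)\cap \Bigl\{b\in\R^{2n}\;\Big|\; \sum_{j\in[n]\setminus V_a}(b_j+b_{n+j})=0\Bigr\}
\]
is identified with $i(\SP(I_{H_a}))$. This $F$ is a face of $\SP(I_G)$, cut out by intersecting with the zero-facets indexed by $j\notin V_a$ and $n+j$ with $j\notin V_a$. By the definition of $V_a$, the point $a$ has $a_j=a_{n+j}=0$ for every $j\in[n]\setminus V_a$, so $a\in F = i(\SP(I_{H_a}))$, which proves the second assertion.

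For the first assertion I would argue by contradiction: assume $H_a$ is disconnected, with connected components $H_1,\dots,H_s$, $s\geq 2$. Since $i$ is an affine injection, it restricts to a bijection between $\SP(I_{H_a})$ and the face $F$ preserving extremality, so the preimage $a' := \pi(a)\in\SP(I_{H_a})$ is a vertex of $\SP(I_{H_a})$ (here $\pi$ is the canonical projection, as in the proof of \Cref{thm:vertex_containment}). Now \Cref{thm:disconnected_vertices}, applied to $H_a$ and its own connected components $H_1,\dots,H_s$, expresses $\mathcal{V}(\SP(I_{H_a}))$ as a \emph{disjoint} union $\bigsqcup_{j=1}^s i'_j(\mathcal{V}(\SP(I_{H_j})))$, so $a' = i'_{j_0}(a'')$ for exactly one index $j_0$ and some $a''\in\mathcal{V}(\SP(I_{H_{j_0}}))$. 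In particular, the support of $a'$ is contained in $V_{H_{j_0}}\cup (V_{H_{j_0}}+|H_a|)$.

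This is the contradiction: by construction, $V_a = V_{H_1}\sqcup\cdots\sqcup V_{H_s}$, and $\pi(a)$ must have $(\pi(a))_j + (\pi(a))_{|H_a|+j}\neq 0$ for every $j\in V_a$ (because $a$ itself does on the corresponding coordinates). Therefore the support of $a'$ hits every $V_{H_j}$, forcing $s=1$. Hence $H_a$ is connected. The main technical point is step three, where one has to verify that passing to the face $F$ preserves the vertex property of $a$; once that is observed, the \emph{disjointness} in \Cref{thm:disconnected_vertices} does the rest of the work.
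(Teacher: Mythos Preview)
Your proposal is correct and follows essentially the same route as the paper: both identify the face $F=i(\SP(I_{H_a}))$ via \Cref{thm:vertex_containment}, note that $a$ remains a vertex on this face, and then apply \Cref{thm:disconnected_vertices} to $H_a$ to force a contradiction with the definition of $V_a$. The only cosmetic difference is that the paper cites \Cref{defn:faces} directly for the fact that a vertex of $\SP(I_G)$ lying on $F$ is a vertex of $F$, whereas you phrase the same step in terms of $i$ being an affine bijection preserving extremality.
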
 \begin{proof} 
	Consider the face $$F=\SP(I_G)\cap \left\{a\in\mathbb{R}^{2n}
	\;\middle|\; \sum_{j\in [n]\setminus V_a} a_j+a_{n+j}=0\right\}.$$ By
	\Cref{thm:vertex_containment}, $F=i(\SP(I_{H_a}))$, and as noted in \Cref{defn:facets}, $a$ is a vertex of $F$. Suppose $H_a$ were disconnected, with connected component $H_{a'}$ and vertex set $V_{a'}$.
	Then by \Cref{thm:disconnected_vertices}, either $a_j=a_{n+j}=0$ for all
	$j\in V_{a'}$ or for all $j\in V_a\backslash V_{a'}$. In either case,
	this contradicts the construction of $V_a$. The corollary follows.
\end{proof}
With this corollary, it makes sense to define the vertices with $V_a=[n]$.
\begin{defn}
	\label{def:full_vertices}
	If $G$ is a connected graph on $[n]$, let
	$$\mathcal{V}_F(\SP(I_G))=\mathcal{V}(\SP(I_G))\cap \left\{a\in \mathbb{R}^{2n} \;\middle|\; a_j+a_{n+j}\neq 0\text{ for } 1\leq j\leq n \right\}.$$ We call these the \textbf{full vertices} of $G$.
\end{defn}
As we shall see in section \ref{s: waldschmidt gin} full vertices can be easier to determine than arbitrary vertices of a symbolic polyhedron.

\begin{thm} 
	\label{cor:subgraph_decomp} 
	Let $G$ be a connected
	graph, and let $\mathcal{H}_G$ denote the set of connected induced subgraphs of
	$G$ with at least one edge. With $I_H=\mathrm{gin}(J_H)$ or $I_H=\inid (J_H)$
	for $H\in \mathcal{H}_G$, under canonical projections $i_H:\mathbb{R}^{2|H|}\rightarrow \mathbb{R}^{2n}$, the set of vertices of $\SP(I_G)$ can be recovered from the sets of full vertices of graphs in $\mathcal{H}_G$ as follows
	$$\mathcal{V}(\SP(I_G))=\bigcup_{H\in
		\mathcal{H}_G} i_H(\mathcal{V}_F(\SP(I_H))).$$ 
	Furthermore, \begin{align*}
		\wald(I_G) &=\min\left\{\min\left\{\sum_{i=1}^{2n}a_i\;\middle|\; a\in \mathcal{V}_F(\SP(I_H))\right\}\;\middle|\;H\in\mathcal{H}_G\right\} \\
		\widehat{\operatorname{reg}}(I_G) &=\max\left\{\max\left\{\sum_{i=1}^{2n}a_i\;\middle|\; a\in \mathcal{V}_F(\SP(I_H))\right\}\;\middle|\;H\in\mathcal{H}_G\right\}
	\end{align*}
\end{thm}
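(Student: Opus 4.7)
The plan is to prove the set equality $\mathcal{V}(\SP(I_G)) = \bigcup_{H\in\mathcal{H}_G} i_H(\mathcal{V}_F(\SP(I_H)))$ and then derive the $\wald$ and $\widehat{\reg}$ identities from \Cref{thm:wald&areg_via_SP}. Since each coordinate inclusion $i_H$ preserves the sum of coordinates, the two displayed formulas follow at once from the set equality by distributing the $\min$ and $\max$ across the union.

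The containment $\bigcup_{H\in\mathcal{H}_G} i_H(\mathcal{V}_F(\SP(I_H))) \subseteq \mathcal{V}(\SP(I_G))$ is immediate from \Cref{thm:vertex_containment} together with the trivial inclusion $\mathcal{V}_F \subseteq \mathcal{V}$.

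For the reverse containment, I would take $a \in \mathcal{V}(\SP(I_G))$ and invoke \Cref{cor:connected_Va} to obtain a connected induced subgraph $H_a$ on vertex set $V_a$ and a point $a' \in \SP(I_{H_a})$ with $a = i_{H_a}(a')$. Three things then need to be checked: that $a'$ is a vertex of $\SP(I_{H_a})$, that it is full, and that $H_a \in \mathcal{H}_G$. Fullness of $a'$ is immediate from the very definition of $V_a$. To see that $a'$ is a vertex, observe that the proof of \Cref{thm:vertex_containment} identifies $F := i_{H_a}(\SP(I_{H_a}))$ with a face of $\SP(I_G)$; then $a$ is a vertex of $\SP(I_G)$ lying on $F$, hence a vertex of $F$, and because $i_{H_a}$ is an injective affine bijection onto $F$, its preimage $a'$ is a vertex of $\SP(I_{H_a})$.

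The main obstacle I expect is the third check, namely that $H_a$ contains at least one edge, equivalently $|V_a| \geq 2$. I would argue by contradiction: if $V_a = \{v\}$, then $a$ is supported on coordinates $v$ and $n+v$ only. Taking $T = \emptyset$ and $U = \{v\}$ in \Cref{prop:new_gin_prime_decomp} (in the $\gin$ case) or in \Cref{prop:in_prime_decomp} (in the $\inid$ case) produces a minimal prime of $I_G$ whose variable index set $M \subseteq [2n] \setminus \{v, n+v\}$ is nonempty because $G$ is connected with at least two vertices. The corresponding facet inequality $\sum_{i \in M} a_i \geq 1$ then reduces to $0 \geq 1$, contradicting $a \in \SP(I_G)$. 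This forces $|V_a| \geq 2$, hence $H_a \in \mathcal{H}_G$, completing the decomposition; the $\wald$ and $\widehat{\reg}$ formulas then follow directly from \Cref{thm:wald&areg_via_SP} as noted.
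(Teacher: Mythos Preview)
Your proposal is correct and follows essentially the same route as the paper, which simply cites \Cref{thm:vertex_containment}, \Cref{thm:disconnected_vertices}, \Cref{cor:connected_Va}, and \Cref{thm:wald&areg_via_SP} and declares the result immediate. Your explicit verification that $H_a$ has at least one edge (ruling out $|V_a|\leq 1$ via a suitable minimal prime with $T=\emptyset$) fills in a detail the paper leaves implicit, and your observation that $i_{H_a}$ is an affine isomorphism onto the face $F$ makes precise why $a'$ is a vertex of $\SP(I_{H_a})$.
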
 
\begin{proof}
	This follows immediately from \Cref{thm:vertex_containment}, \Cref{thm:disconnected_vertices}, \Cref{cor:connected_Va}, and \Cref{thm:wald&areg_via_SP}.
\end{proof}
We illustrate this decomposition in the following example:
\begin{example}
	Let $P_4$ be the path graph on 4 vertices. Then $\mathcal{V}(\SP(\gin(J_{P_4})))$ is
	$$\left\{\begin{bmatrix}
		1  \\ 1  \\ 0 \\  0  \\ 0\\ 0  \\ 0   \\0 
	\end{bmatrix},\begin{bmatrix}
		0 \\ 1  \\ 1 \\  0  \\ 0\\ 0  \\ 0   \\0 
	\end{bmatrix},\begin{bmatrix}
		0 \\ 0  \\ 1 \\  1  \\ 0\\ 0  \\ 0   \\0 
	\end{bmatrix},\begin{bmatrix}
		1  \\ 0  \\ 1 \\  0  \\ 0\\ 1  \\ 0   \\0 
	\end{bmatrix},\begin{bmatrix}
		0 \\ 1  \\ 0  \\ 1  \\ 0\\ 0  \\ 1   \\0 
	\end{bmatrix},\begin{bmatrix}
		1 \\ 0  \\ 0  \\ 1  \\ 0\\ 1  \\ 1   \\0 
	\end{bmatrix},\tfrac{1}{2}\begin{bmatrix}
		1 \\ 1 \\ 1 \\ 0 \\ 0 \\ 1 \\ 0 \\ 0
	\end{bmatrix},\tfrac{1}{2}\begin{bmatrix}
		0 \\ 1 \\ 1 \\ 1 \\ 0 \\ 0 \\ 1 \\ 0
	\end{bmatrix},\tfrac{1}{2}\begin{bmatrix}
		1 \\ 1 \\ 0 \\ 1 \\ 0 \\ 1 \\ 1 \\ 0
	\end{bmatrix},\tfrac{1}{2}\begin{bmatrix}
		1 \\ 0 \\ 1 \\ 1 \\ 0 \\ 1 \\ 1 \\ 0
	\end{bmatrix},\tfrac{1}{3}\begin{bmatrix}
		1 \\ 1 \\ 1 \\ 1 \\ 0 \\ 1 \\ 1 \\ 0
	\end{bmatrix}\right\}.$$
	As defined in \Cref{cor:subgraph_decomp}, $\mathcal{H}$ has $P_4$, its three edges isomorphic to $P_2$, and its two subpaths isomorphic to $P_3$. The full vertices of $P_2$, $P_3$, and $P_4$ are as follows:
	\begin{flalign*}
		&\mathcal{V}_F(\SP(\gin(J_{P_2})))=\left\{\begin{bmatrix}
			1 \\ 1 \\ 0 \\ 0
		\end{bmatrix}\right\}, \mathcal{V}_F(\SP(\gin(J_{P_3})))=\left\{\begin{bmatrix}
			1 \\ 0 \\ 1 \\ 0 \\ 1 \\ 0 \\
		\end{bmatrix},\tfrac{1}{2}\begin{bmatrix}
			1 \\ 1 \\ 1 \\ 0 \\ 1 \\ 0 \\
		\end{bmatrix}\right\}
		\\
		&\mathcal{V}_F(\SP(\gin(J_{P_4})))=\left\{\begin{bmatrix}
			1 \\ 0 \\ 0 \\ 1 \\ 0 \\ 1 \\ 1 \\ 0 \\
		\end{bmatrix},\tfrac{1}{2}\begin{bmatrix}
			1 \\ 1 \\ 0 \\ 1 \\ 0 \\ 1 \\ 1 \\ 0 \\
		\end{bmatrix},\tfrac{1}{2}\begin{bmatrix}
			1 \\ 0 \\ 1 \\ 1 \\ 0 \\ 1 \\ 1 \\ 0 \\
		\end{bmatrix},\tfrac{1}{3}\begin{bmatrix}
			1 \\ 1 \\ 1 \\ 1 \\ 0 \\ 1 \\ 1 \\ 0 \\
		\end{bmatrix}\right\}
	\end{flalign*}
	Under suitable inclusions, we recover the 11 vertices in $\mathcal{V}(\SP(\gin(J_{P_4})))$.
\end{example}

\section{The Waldschmidt constant of the multigraded generic initial ideal}\label{s: waldschmidt gin}

We have now developed the necessary tools to study $\wald(\gin(J_G))$. The edge ideal is the precursor of the binomial edge ideal, defined as follows.
\begin{defn}
	For a simple graph $G$ with edge set $E_G$, the edge ideal $I_G\subset \mathbb{K}[x_1,\dots,x_n]$ is defined by $$I_G=(x_ax_b\mid \{a,b\}\in E_G).$$
	
\end{defn}
For our purposes, we will consider $I_G$ as an ideal of $K[x_1,\dots,x_n,y_1,\dots,y_n]$, with $\SP(I_G)\subset \mathbb{R}_{\geq 0}^{2n}$. Edge ideals have convenient prime decompositions.
\begin{lem}\cite[Corollary 1.35]{tuyl2023}
	The minimal primes of $I_G$ are in one-to-one correspondence with the minimal vertex covers of $G$.
\end{lem}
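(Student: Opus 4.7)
The plan is to establish the correspondence by first showing that every monomial prime containing $I_G$ must be generated by variables corresponding to a vertex cover, and then showing that every minimal prime of $I_G$ is a monomial prime of this form. The key tool is that $I_G$ is a squarefree monomial ideal, so its minimal primes are generated by subsets of the variables $\{x_1,\dots,x_n,y_1,\dots,y_n\}$.

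First I would define, for any subset $C\subseteq V(G)$, the ideal $P_C=(x_i\mid i\in C)$, which is prime since it is generated by a subset of the variables. The core computation is to verify that $I_G\subseteq P_C$ if and only if $C$ is a vertex cover of $G$. The forward direction uses that each generator $x_ax_b$ of $I_G$ lies in $P_C$ only when $x_a\in P_C$ or $x_b\in P_C$ (since $P_C$ is prime), forcing $a\in C$ or $b\in C$ for every edge. The reverse direction is immediate: if every edge of $G$ has an endpoint in $C$, then every generator of $I_G$ lies in $P_C$.

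Next, I would argue that every minimal prime of $I_G$ is of the form $P_C$ for some subset $C\subseteq V(G)$. Since $I_G$ is a squarefree monomial ideal, its radical equals itself, and by a standard fact (e.g.\ via Stanley--Reisner theory or direct primary decomposition of monomial ideals), the minimal primes are generated by variables. Because $I_G$ involves only the $x_i$'s, these primes are of the form $P_C$ for some $C\subseteq V(G)$. Combining this with the first step, the minimal primes correspond bijectively to the minimal elements of $\{C\subseteq V(G)\mid C\text{ is a vertex cover}\}$, which are precisely the minimal vertex covers of $G$.

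The main obstacle is really just invoking the fact that minimal primes of a squarefree monomial ideal are generated by variables; this is standard but deserves a citation or a one-line justification via localization at a monomial prime. Otherwise the argument is a direct translation between the combinatorial condition ``covering every edge'' and the algebraic condition ``containing every generator'', and no further subtlety arises.
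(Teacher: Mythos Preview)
Your argument is correct and is the standard proof of this well-known fact. Note, however, that the paper does not actually supply a proof of this lemma: it is stated with a citation to \cite[Corollary 1.35]{tuyl2023} and used as a black box, so there is no ``paper's own proof'' to compare against. Your write-up is essentially what one finds in the cited reference (or any introduction to edge ideals): identify primes containing $I_G$ with vertex covers via the observation that $x_ax_b\in P$ forces $x_a\in P$ or $x_b\in P$, and then appeal to the fact that minimal primes of a squarefree monomial ideal are generated by variables.
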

An important observation is that for a simple graph $G$, $I_G\subset \gin(J_G)$, since the edges of $G$ are induced paths. We show that the Waldschmidt constant of these ideals is equal. The main idea is that by the structure of the gin the Waldschmidt constant can be attained at a point whose $y$-coordinates are all zero.

\begin{thm}
	\label{thm:wald_equality}
	For a simple graph $G$, there is an equality $$\wald(\gin(J_G))=\wald(I_G).$$
\end{thm}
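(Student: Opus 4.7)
The strategy is to prove the two inequalities $\wald(\gin(J_G)) \le \wald(I_G)$ and $\wald(\gin(J_G)) \ge \wald(I_G)$ separately, relying on the polyhedral characterization of the Waldschmidt constant (\Cref{thm:wald&areg_via_SP}).

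The first inequality will be immediate: since every edge of $G$ is an induced path of length one, \Cref{thm:gin_gen} yields the containment $I_G \subseteq \gin(J_G)$ in $S$. Both ideals are squarefree monomial, and for such ideals the containment passes to symbolic powers (since every minimal prime $P_{M'}$ of $\gin(J_G)$ contains a minimal prime $P_M$ of $I_G$ with $M \subseteq M'$, and monomial primes have $P^{(m)} = P^m$). Therefore $\alpha(\gin(J_G)^{(m)}) \le \alpha(I_G^{(m)})$ for every $m \ge 1$, and dividing by $m$ and letting $m \to \infty$ gives $\wald(\gin(J_G)) \le \wald(I_G)$.

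For the reverse inequality, the crucial observation is that for every vertex cover $C$ of $G$ the monomial prime $P_C := (x_i, y_i \mid i \in C)$ contains $\gin(J_G)$. Indeed, by \Cref{thm:gin_gen}, a minimal generator $y_{v_1} \cdots y_{v_s} x_i x_j$ corresponds to an induced path $i, v_1, \dots, v_s, j$ in $G$, and the first edge of this path (either $\{i, v_1\}$, or $\{i, j\}$ when $s = 0$) is covered by $C$; thus at least one of $x_i$, $y_{v_1}$, or $x_j$ lies in $P_C$. By \Cref{lem:primedecompositioncontainment}, $P_C$ then contains some minimal prime of $\gin(J_G)$, whose facet inequality implies the (possibly non-facet) inequality $\sum_{i \in C}(a_i + b_i) \ge 1$ on every point $(a, b) \in \SP(\gin(J_G))$.

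To conclude, given any $(a, b) \in \SP(\gin(J_G))$ I would transfer the $y$-coordinate mass onto the $x$-coordinates by forming the point $(a + b, 0) \in \R^{2n}$, which has the same coordinate sum $\sum (a_i + b_i)$. The inequalities $\sum_{i \in C}(a_i + b_i) \ge 1$, ranging over all minimal vertex covers $C$ of $G$, are exactly the facet inequalities defining $\SP(I_G)$, so $(a + b, 0) \in \SP(I_G)$. Minimizing the coordinate sum on both sides and invoking \Cref{thm:wald&areg_via_SP} then yields $\wald(I_G) \le \wald(\gin(J_G))$. The main hurdle is the algebraic containment $\gin(J_G) \subseteq P_C$ for every vertex cover $C$; once this is checked via the explicit path description in \Cref{thm:gin_gen}, the substitution $(a,b) \mapsto (a+b, 0)$ and the polyhedral bookkeeping that translates the inequality into membership in $\SP(I_G)$ are routine.
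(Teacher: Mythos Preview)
Your proof is correct and shares the paper's overall architecture: one inequality from the containment $I_G\subseteq\gin(J_G)$, and the other from the mass-transfer $(a,b)\mapsto(a+b,0)$ combined with the vertex-cover inequality $\sum_{i\in C}(a_i+b_i)\ge 1$. Where you differ is in how that inequality is obtained and used. The paper, for each minimal vertex cover $C$, runs an algorithm that prunes $C$ down to an IDS $T_{C,\Omega}$ and thereby produces an \emph{explicit} minimal prime $P_{T_{C,\Omega},[n]\setminus C}$ of $\gin(J_G)$; it then shows in two steps that $b'=(a+b,0)$ first lies in $\SP(\gin(J_G))$ (using that $y_i\in P_M\Rightarrow x_i\in P_M$ for every minimal prime $P_M$) and only afterwards that $b'\in\SP(I_G)$. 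You bypass both detours: you check $\gin(J_G)\subseteq(x_i,y_i\mid i\in C)$ directly from the path description in \Cref{thm:gin_gen}, invoke \Cref{lem:primedecompositioncontainment} for mere existence of a minimal prime inside $(x_i,y_i\mid i\in C)$, and land $(a+b,0)$ in $\SP(I_G)$ in one stroke. Your route is shorter and avoids the IDS construction entirely; the paper's route, on the other hand, yields the extra information that the ``$y$-collapsed'' point already stays inside $\SP(\gin(J_G))$ and identifies precisely which minimal prime of $\gin(J_G)$ witnesses each vertex-cover inequality.
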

\begin{proof}
	Since $I_G\subset \gin(J_G)$, $I_G^{(q)}\subset \gin(J_G)^{(q)}$ for every $q\geq 1$. Since both ideals are squarefree monomial ideals $\alpha(I_G^{(q)})\geq \alpha(\gin(J_G)^{(q)})$, so $\wald(I_G)\geq \wald(\gin(J_G))$ by \cite[Lemma 3.10]{DiPasquale19}.
	
	To prove that $\wald(I_G)\leq \wald(\gin(J_G))$, we first show if $a\in \SP(\gin(J_G))$ and $a_{n+1}=\dots=a_{2n}=0$, then $a\in \SP(I_G)$. Let $C$ be a minimal vertex cover of $G$, and take the minimal prime $P_C=(x_a\mid a\in C)$ of $I_G$. We construct algorithmically an IDS for $G$ which witnesses that the ideal $P_C+(y_i\mid n+i\in M)$  is a minimal prime  of $\gin(J_G)$. Initialize  $T_{C,0}=C$. Since $C$ is a minimal vertex cover, $V_G\setminus C$ is a maximal independent set, so $c_G(T_0)\geq c_G(S)$ for any $S\subset T_0$. If there is $v\in T_0$ such that $c_G(T_0)=c_G(T\setminus \{v\})$, let $T_{C,1}=T_{C,0}\setminus\{v\}$, and repeat the above process for $T_{C,1}$. This algorithm will terminate since $G$ is finite; call its output $T_{C,\Omega}$. By construction, $T_{C,\Omega}$ is an IDS for $G$, and each connected component of $G\setminus T_{C,\Omega}$ contains exactly one vertex not in $C$. Then, in the notation of \Cref{prop:new_gin_prime_decomp}, $P_{T_{C,\Omega}, [n]\setminus C}$ is a minimal prime of $\gin(J_G)$, with $$P_{T_{C,\Omega}, [n]\setminus C}=(x_i,y_i\mid i\in T_{C,\Omega})+(x_i\mid i\in C\setminus T_{C,\Omega})=P_C+(y_i\mid i\in T_{C,\Omega}).$$ Thus, if $a\in \SP(\gin(J_G))$ and $a_{n+1}=\dots=a_{2n}=0$, then for every minimal vertex cover $C$,
	$$\sum_{i\in C}a_i=\sum_{i\in C}a_i+\sum_{i\in T_{C,\Omega}}a_{n+i}\geq 1,$$ since $a$ satisfies the corresponding facet inequality of $P_{T_{C,\Omega}, [n]\setminus C}$. As desired, we conclude that $a\in \SP(I_G)$.
	
	Now, we show that there is $a\in \SP(\gin(J_G))$ with $a_{n+1}=\dots=a_{2n}=0$ and $\sum_{i=1}^{2n}a_i=\wald(\gin(J_G))$. For $b\in \SP(\gin(J_G))$, define $b'$  by $b'_i=\begin{cases}
		b_i+b_{n+i}, & i\leq n \\
		0, & i\geq n+1
	\end{cases}.$
	Observe that $\sum_{i=1}^{2n}b_i'=\sum_{i=1}^{2n}b_i$. For any minimal prime $P_M$ of $\gin(J_G)$, if $n+i\in M$, then $i\in M$. We can then split
	$$\sum_{i\in M}b_i=\sum_{\{i\in M\cap [n]|n+i\in M\}} \left(b_i+b_{n+i}\right)+\sum_{\{i\in M\cap [n]|n+i\not\in M\}} b_i.$$ Then since $b_{j}'
	=0$ for $j\geq n+1$, and $\sum_{i\in M}b_i\geq 1$, we have
	\begin{eqnarray*}
		\sum_{i\in M}b'_i &=& \sum_{\substack{i\in M\cap [n] \\ n+i\in M}} b_i'+\sum_{\substack{i\in M\cap [n] \\ n+i\notin M}} b_i
		'=\sum_{\substack{i\in M\cap [n] \\ n+i\in M}} (b_i+b_{n+i}) +\sum_{\substack{i\in M\cap [n] \\ n+i\notin M}} (b_i+b_{n+i}) \\&=&\sum_{i\in M} b_i+ \sum_{\substack{i\in M\cap [n] \\ n+i\notin M}} b_{n+i}\geq 1 \qquad \qquad \qquad  \text{since }b\in\SP(\gin(J_G)).
	\end{eqnarray*}
	We conclude $b'\in \SP(\gin(J_G))$. Therefore,  taking $a\in \SP(\gin(J_G))$ with $\sum_{i=1}^{2n}a_i=\wald(\gin(J_G))$, $a'\in \SP(\gin(J_G))$ has $a'_{n+1}=\dots=a'_{2n}=0$ and $\sum_{i=1}^{2n}a'_i=\wald(\gin(J_G))$. Our previous result shows $a'\in \SP(I_G)$, so $\wald(I_G)\leq \wald(\gin(J_G))$ and the result follows.
\end{proof}

While $\SP(\gin(J_G))$ contains $\SP(I_G)\subset \mathbb{K}[x_1,\dots,x_n,y_1,\dots,y_n]$, the two are not always equal. In fact, from \Cref{thm:gin_gen} and \Cref{prop:0/1_vertices}, $\SP(\gin(J_G))=\SP(I_G)$ if and only if $G$ is complete. It is also reasonable to ask whether their projections to the first $n$ coordinates are equal, that is, $\pi(\SP(\gin(J_G)))=\pi(\SP(I_G))$ under the map $\pi:\mathbb{R}^{2n}\rightarrow \mathbb{R}^n$ defined by $(\pi(a))_i=a_i$. As the following example shows, this is not the case.

\begin{example}
	The graph below gives the following vertex sets for $\SP(\gin(J_G))$ and $\SP(I_G)$, expressed here as vectors. In line with \Cref{thm:wald_equality}, $\wald(\gin(J_G))=\wald(I_G)=\frac{3}{2}$.
	
	\begin{center}
		\begin{minipage}{0.15\textwidth}
			\centering
			\begin{tikzpicture}[scale=1, every node/.style={circle, draw, inner sep=2pt}]
				\node (1) at (90:1) {1};
				\node (2) at (210:1) {2};
				\node (3) at (330:1) {3};
				\node[draw=none, inner sep=0pt] at (0,-1.2) {$G$};
				\node (4) at (90:2.4) {4};
				\draw (1) -- (2) -- (3) -- (1);
				\draw (1) -- (4);
			\end{tikzpicture}
		\end{minipage}\hfill
		\begin{minipage}{0.8\textwidth}
			\begin{flalign*}
				&\mathcal{V}(\SP(I_G))=\left\{\!
				\begin{bmatrix}1\\1\\0\\0\\0\\0\\0\\0\end{bmatrix},
				\begin{bmatrix}1\\0\\1\\0\\0\\0\\0\\0\end{bmatrix},
				\begin{bmatrix}0\\1\\1\\0\\0\\0\\0\\0\end{bmatrix},
				\tfrac{1}{2}\begin{bmatrix}1\\1\\1\\0\\0\\0\\0\\0\end{bmatrix},
				\begin{bmatrix}1\\0\\0\\1\\0\\0\\0\\0\end{bmatrix}
				\right\} \\ &\mathcal{V}(\SP(\gin(J_G)))=\mathcal{V}(\SP(I_G))\cup\left\{\!
				\begin{bmatrix}0\\1\\0\\1\\1\\0\\0\\0\end{bmatrix},
				\begin{bmatrix}0\\0\\1\\1\\1\\0\\0\\0\end{bmatrix},
				\tfrac{1}{2}\begin{bmatrix}1\\1\\0\\1\\1\\0\\0\\0\end{bmatrix},
				\tfrac{1}{2}\begin{bmatrix}0\\1\\1\\1\\1\\0\\0\\0\end{bmatrix},
				\tfrac{1}{3}\begin{bmatrix}1\\1\\1\\1\\1\\0\\0\\0\end{bmatrix}
				\right\}
			\end{flalign*}
		\end{minipage}
	\end{center}
	The projection of the last vertex under $\pi$ is $\tfrac{1}{3}\begin{bmatrix}
		1 \\ 1 \\ 1 \\ 1
	\end{bmatrix}$, while 
	$c\begin{bmatrix}
		1 \\ 1 \\ 1 \\ 1
	\end{bmatrix}\in \pi(\SP(I_G))$ only for $c\geq \frac{1}{2}$.
\end{example}

In \cite{SP}, $\wald(I_G)$ is described in terms of $G$'s fractional chromatic number. This is used to yield the following bound on $\wald(I_G)$. 
\begin{thm}\cite[Theorem 6.3]{SP}
	\label{thm:edge_ideal_chi_bound}
	Let $G$ be a non-empty graph with chromatic number $\chi(G)$ and clique number $\omega(G)$. Then if $I_G$ is the edge ideal of $G$, 
	$$\frac{\chi(G)}{\chi(G)-1}\leq \wald(I_G) \leq\frac{\omega(G)}{\omega(G)-1}.$$
\end{thm}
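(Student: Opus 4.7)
The result reduces to two independent LP arguments, each using the characterization $\wald(I_G) = \min\{\sum_i a_i : a \in \SP(I_G)\}$ from \Cref{thm:wald&areg_via_SP} together with the fact that, by the identification of the minimal primes of $I_G$ with the minimal vertex covers of $G$, the polyhedron $\SP(I_G) \subset \R^{2n}_{\geq 0}$ is cut out by the inequalities $\sum_{v \in C} a_v \geq 1$ for $C$ ranging over minimal vertex covers of $G$ (with no constraints on the $y$-coordinates, so at an optimum $a_{n+1} = \cdots = a_{2n} = 0$).

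For the upper bound $\wald(I_G) \leq \omega(G)/(\omega(G)-1)$, the plan is to exhibit an explicit feasible primal point. Fix a maximum clique $K = \{v_1,\dots, v_{\omega(G)}\} \subset V(G)$ and set $a \in \R^{2n}_{\geq 0}$ with $a_{v_i} = 1/(\omega(G) - 1)$ for $1 \leq i \leq \omega(G)$ and $a_j = 0$ otherwise. Any minimal vertex cover $C$ of $G$ must contain at least $\omega(G) - 1$ vertices of $K$, since omitting two clique vertices $v_i, v_j$ would leave the edge $\{v_i,v_j\}$ uncovered. Hence $\sum_{v \in C} a_v \geq (\omega(G) - 1)/(\omega(G) - 1) = 1$, so $a \in \SP(I_G)$ and $\wald(I_G) \leq \sum_i a_i = \omega(G)/(\omega(G) - 1)$.

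For the lower bound $\chi(G)/(\chi(G)-1) \leq \wald(I_G)$, the plan is to construct a feasible solution to the LP dual. The dual of the primal computing $\wald(I_G)$ is
\[
\max\left\{\sum_C y_C \;\middle|\; y \geq 0,\ \sum_{C \ni v} y_C \leq 1 \text{ for all } v \in V(G)\right\},
\]
where $C$ ranges over minimal vertex covers of $G$. Let $I_1,\dots,I_{\chi(G)}$ be the color classes of a proper $\chi(G)$-coloring. Each $V(G) \setminus I_j$ is a vertex cover (as $I_j$ is independent), so it contains a minimal vertex cover $C_j$. Each vertex $v$ lies outside $C_{c(v)}$, where $c(v)$ is $v$'s color, so $v$ belongs to at most $\chi(G) - 1$ of the $C_j$'s. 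Setting $y_{C_j} = 1/(\chi(G) - 1)$ (and $y_C = 0$ for every other minimal vertex cover $C$) satisfies the dual constraints, and weak LP duality gives $\wald(I_G) \geq \sum_j y_{C_j} = \chi(G)/(\chi(G) - 1)$.

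The main obstacle is the dual construction: one must identify the correct set of vertex covers (those coming from color classes) and the correct uniform weight $1/(\chi(G)-1)$. The upper bound is more mechanical once the clique-covering observation is in hand. A more conceptual alternative is to derive both bounds from the stronger equality $\wald(I_G) = \chi_f(G)/(\chi_f(G)-1)$ (a separate LP-duality argument linking $\SP(I_G)$ to fractional colorings of $G$), combined with the classical chain $\omega(G) \leq \chi_f(G) \leq \chi(G)$ and the strict monotonicity of $t \mapsto t/(t-1)$ for $t > 1$.
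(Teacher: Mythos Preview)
Your proof is correct. The only minor wrinkle is in the lower-bound dual construction: the minimal vertex covers $C_1,\dots,C_{\chi(G)}$ you extract from the color classes need not be pairwise distinct, so ``set $y_{C_j}=1/(\chi(G)-1)$'' should be read as aggregating weights when covers coincide, i.e.\ $y_C=\lvert\{j:C_j=C\}\rvert/(\chi(G)-1)$. With that interpretation both dual constraints and the objective value $\chi(G)/(\chi(G)-1)$ go through unchanged.

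Your route, however, is quite different from the paper's. The paper does \emph{not} argue directly on $\SP(I_G)$; instead it first establishes the new equality $\wald(\gin(J_G))=\wald(I_G)$ (\Cref{thm:wald_equality}) and then bounds $\wald(\gin(J_G))$ via the symbolic-polyhedron machinery developed earlier: the upper bound comes from the induced-subgraph vertex containment (\Cref{cor:subgraph_decomp}) together with the explicit full-vertex computation for $K_{\omega(G)}$ (\Cref{prop:complete_full_vertices}, \Cref{cor:complete_wald}), and the lower bound comes from the polyhedral containment $\SP(\gin(J_G))\subseteq\SP(\gin(J_{K_{c_1,\dots,c_{\chi(G)}}}))$ (\Cref{lem:partitions_to_complete_partite}) combined with the complete-multipartite computation (\Cref{prop:complete_kpartite}). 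Your argument is more elementary and self-contained---essentially a direct LP/fractional-chromatic argument in the spirit of the original proof in \cite{SP}---whereas the paper's argument is longer but deliberately routes everything through $\gin(J_G)$, which is the object it actually cares about; the bound on $\wald(I_G)$ then drops out as a corollary of \Cref{thm:chi_omega_bound} rather than being proved first.
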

We give an alternate proof of this result using \Cref{thm:wald_equality}. To start, we consider the full vertices of complete graphs.
\begin{prop}
	\label{prop:complete_full_vertices}
	For $n\geq 2$,
	$$\mathcal{V}_F(\SP(\gin(J_{K_n})))=\left\{\frac{1}{n-1}(\underset{n}{\underbrace{1,\dots,1}},\underset{n}{\underbrace{0,\dots,0}})\right\}.$$
\end{prop}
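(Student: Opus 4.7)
My plan is to analyze the facets of $\SP(\gin(J_{K_n}))$ directly and apply the vertex criterion of \Cref{lem:vertex_criterion}, exploiting the asymmetric role of the $x$- and $y$-variables to pin down the unique full vertex.

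First I would make $\gin(J_{K_n})$ explicit. Since $K_n$ is complete, any set of vertices $\{i,v_1,\ldots,v_s,j\}$ with $s\ge 1$ induces a subgraph containing triangles, so the only induced paths in $K_n$ are its edges. By \Cref{thm:gin_gen}, the minimal generators of $\gin(J_{K_n})$ are therefore the monomials $x_ix_j$ with $1\le i<j\le n$. Its minimal primes are $P_v = (x_i \mid i \in [n] \setminus \{v\})$ for $v \in [n]$ (take $T=\emptyset$, $U=\{v\}$ in \Cref{prop:new_gin_prime_decomp}). Accordingly, $\SP(\gin(J_{K_n}))$ has $2n$ zero-facets $a_k \ge 0$ together with $n$ one-facets
$$F_v : \sum_{i \in [n] \setminus \{v\}} a_i \ge 1, \qquad v \in [n],$$
and crucially no $y$-coordinate appears in any of the one-facets.

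Next, let $a \in \mathcal{V}_F(\SP(\gin(J_{K_n})))$. By \Cref{lem:vertex_criterion} the point $a$ lies on at least $2n$ facets. Fullness $a_j + a_{n+j} \ne 0$ means that for each $j \in [n]$ at most one of the two zero-facets $a_j = 0$, $a_{n+j} = 0$ is tight at $a$, so at most $n$ zero-facets are tight; hence all $n$ one-facets $F_v$ must be tight. The main calculation is then a short case analysis. Let $A = \{j \in [n] : a_j > 0\}$ and $B = [n] \setminus A$. Because no $y$-coordinate appears in $F_v$, the tight equation $F_v$ reads $\sum_{i \in A \setminus \{v\}} a_i = 1$. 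If $B \neq \emptyset$, choosing $v \in B$ gives $\sum_{i \in A} a_i = 1$; but then choosing any $v \in A$ yields $\sum_{i \in A} a_i - a_v = 1$, forcing $a_v = 0$ and contradicting $v \in A$. So $B = \emptyset$, and fullness now forces the $n$ tight zero-facets to be $a_{n+j} = 0$ for all $j \in [n]$. The tight one-facets reduce to the system $\sum_{i \ne v} a_i = 1$ for $v \in [n]$, whose unique solution is $a_j = \tfrac{1}{n-1}$. This candidate point lies in $\SP(\gin(J_{K_n}))$ and meets $2n$ facet hyperplanes with a unique intersection point, hence it is a vertex, and it is plainly full.

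The main obstacle is precisely the step that eliminates the case $B\neq\emptyset$: since $y$-variables never appear in the one-facet inequalities, a full vertex could a priori shift positive weight from $a_j$ onto $a_{n+j}$ at some index $j$. The short argument above is where this apparent freedom is ruled out; once it is handled, the rest collapses to a routine linear computation on the $x$-coordinates only.
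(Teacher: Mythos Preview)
Your proof is correct and follows essentially the same route as the paper's: both identify the $n$ one-facets (none of which involve a $y$-coordinate), invoke \Cref{lem:vertex_criterion} to force all of them tight, and solve the resulting linear system. The only difference is the order of deductions: the paper first observes directly that any vertex must have $a_{n+1}=\cdots=a_{2n}=0$ (since a positive $y$-coordinate could be perturbed in both directions without touching any one-facet), which by fullness gives $a_1,\ldots,a_n>0$ immediately and bypasses your $A,B$ case split.
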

\begin{proof}
	The only IDS of $K_n$ is $T=\varnothing$, corresponding to $n$ one-facets of $\SP(\gin(J_{K_n}))$ with the following inequality for a given $1\leq j\leq n$: $$\left(\sum_{i=1}^nx_i\right)-x_j\geq 1.$$ Since no $y$'s appear in the one-facets, any $a\in \mathcal{V}_F(\SP(\gin(J_{K_n})))$ has $a_{n+1}=\dots=a_{2n}=0$. This implies $a_1,\dots,a_n>0$ and $a$ lies on exactly $n$ zero-facets, so by \Cref{lem:vertex_criterion}, $a$ must lie on all $n$ one-facets. That is, for any $1\leq c<d\leq n$, 
	$$\left(\sum_{i=1}^na_i\right)-a_c=\left(\sum_{i=1}^na_i\right)-a_d=1.$$ Canceling gives $a_c=a_d$, so $a_1=\dots=a_n$ and the claim follows after verifying $a=\frac{1}{n-1}(1,\dots,1,0,\dots,0)$ is indeed a vertex. 
	
	Take ${\bf u}\in \mathbb{R}^{2n}$ and assume $a\pm \epsilon {\bf u}\in \SP(\gin(J_{K_n}))$ for some $\epsilon >0$. For a given $1\leq i\leq n$, to satisfy the zero-facet corresponding to the $(n+i)$-th coordinate, we have $(a\pm\epsilon{\bf u})_{n+i}=\pm \epsilon {\bf u}_{n+i}\geq 0$, which implies ${\bf u}_{n+i}=0$. So ${\bf u}_{n+1}=\dots={\bf u}_{2n}=0.$ Per \Cref{obs:facets}, since $a$ is on every one-facet, so is $a\pm \epsilon {\bf u}.$ Then for any $1\leq c<d\leq n$,
	$$\left(\sum_{i=1}^n(a\pm \epsilon {\bf u})_i\right)-(a\pm \epsilon {\bf u})_c=\left(\sum_{i=1}^n(a\pm \epsilon {\bf u})_i\right)-(a\pm \epsilon {\bf u})_d=1.$$ 
	Canceling gives $(a\pm \epsilon {\bf u})_c=(a\pm \epsilon {\bf u})_d$, and since $a_c=a_d$, we have ${\bf u}_c={\bf u}_d$. Then ${\bf u}_1=\dots = {\bf u}_n$.
	
	But $\left(\sum_{i=1}^na_i\right)-a_c=1$, so $$\left(\sum_{i=1}^n(a\pm \epsilon {\bf u})_i\right)-(a\pm \epsilon {\bf u})_c=1\pm \epsilon \left(\left(\sum_{i=1}^n {\bf u}_i\right)-{\bf u}_c\right)=1,$$ and we conclude ${\bf u}_1=\dots = {\bf u}_n=0.$ Therefore, ${\bf u}=0$, and $a$ is a vertex per \Cref{defn:facets}.
\end{proof}
This proposition lets us easily compute $\wald(\gin(J_{K_n}))$.
\begin{cor}
	\label{cor:complete_wald}
	$$\wald(\gin(J_{K_n}))=\frac{n}{n-1}.$$
\end{cor}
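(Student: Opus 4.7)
The plan is to combine the preceding Proposition~4.14 (\texttt{prop:complete\_full\_vertices}) with the subgraph decomposition formula of Theorem~4.12 (\texttt{cor:subgraph\_decomp}) applied to $G = K_n$. Since $K_n$ is connected, the Waldschmidt constant is the minimum, over all connected induced subgraphs $H \in \mathcal{H}_{K_n}$ with at least one edge, of the minimal coordinate sum attained on $\mathcal{V}_F(\SP(\gin(J_H)))$.

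First I would observe that every connected induced subgraph of $K_n$ on $m \geq 2$ vertices is itself a complete graph $K_m$. Hence $\mathcal{H}_{K_n} = \{K_m \mid 2 \leq m \leq n\}$ (up to isomorphism, and with the corresponding inclusion of vertex sets handled by the canonical embeddings $i_H$ from Theorem~4.12). Next, Proposition~4.14 tells us that for each such $K_m$, the set $\mathcal{V}_F(\SP(\gin(J_{K_m})))$ consists of a single point whose nonzero coordinates are $m$ copies of $\tfrac{1}{m-1}$. Consequently its coordinate sum is exactly $\tfrac{m}{m-1}$.

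Plugging this into the formula of Theorem~4.12 reduces the problem to computing
\[
\min\left\{ \frac{m}{m-1} \;\middle|\; 2 \leq m \leq n \right\}.
\]
The function $m \mapsto \tfrac{m}{m-1} = 1 + \tfrac{1}{m-1}$ is strictly decreasing in $m$ on $[2,\infty)$, so the minimum over $\{2, \ldots, n\}$ is attained at $m = n$, giving the value $\tfrac{n}{n-1}$. This yields $\wald(\gin(J_{K_n})) = \tfrac{n}{n-1}$.

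There is no real obstacle here: both ingredients have already been established, and the only point requiring a brief check is that the full vertex of $\mathcal{V}_F(\SP(\gin(J_{K_m})))$, when pushed forward by the inclusion $i_{K_m}\colon \mathbb{R}^{2m} \to \mathbb{R}^{2n}$, retains the same coordinate sum $\tfrac{m}{m-1}$, which is immediate since the inclusion fills in zeros.
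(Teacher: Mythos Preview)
Your proof is correct and follows essentially the same approach as the paper: apply \Cref{cor:subgraph_decomp} to $K_n$, note that every connected induced subgraph with an edge is some $K_m$ with $2\le m\le n$, use \Cref{prop:complete_full_vertices} to identify the unique full vertex with coordinate sum $\tfrac{m}{m-1}$, and minimize over $m$. The paper's argument is slightly terser but identical in substance.
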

\begin{proof}
	 Define $\mathcal{H}_G$ as in \Cref{cor:subgraph_decomp}. Then for any $H\in \mathcal{H}_G$, $H\cong K_m$ for some $2\leq m\leq n$. For $a\in\mathcal{V}_F(\SP(\gin(J_H)))$,  $\sum_{i=1}^{2n}a_i=\frac{m}{m-1}$ by \Cref{prop:complete_full_vertices}. By  \Cref{cor:subgraph_decomp}, $\wald(\gin(J_{K_n}))=\min\left\{\frac{m}{m-1}\mid 2\leq m\leq n\right\}=\frac{n}{n-1}$.
\end{proof}
We can use this result, along with \Cref{thm:vertex_containment}, to generalize to complete $k$-partite graphs.
\begin{prop}
	\label{prop:complete_kpartite}
	Let $G=K_{c_1,\dots,c_k}$ be a complete $k$-partite graph, with $c_1+\dots+c_k=n$. Then $$\widehat{\alpha}(\gin(J_G))=\frac{k}{k-1}.$$
\end{prop}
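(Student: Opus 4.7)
The plan is to establish matching upper and lower bounds on $\widehat{\alpha}(\gin(J_G))$. For the upper bound, I would pick one vertex $v_i$ from each part $V_i$, so that the induced subgraph of $G$ on $\{v_1,\dots,v_k\}$ is isomorphic to $K_k$. By \Cref{prop:complete_full_vertices}, the unique full vertex of $\SP(\gin(J_{K_k}))$ has coordinate sum $\tfrac{k}{k-1}$. Applying \Cref{thm:vertex_containment} (or equivalently \Cref{cor:subgraph_decomp}), its image in $\mathbb{R}^{2n}$ under the natural inclusion is a vertex of $\SP(\gin(J_G))$ with the same coordinate sum, and \Cref{thm:wald&areg_via_SP} then yields $\widehat{\alpha}(\gin(J_G)) \leq \tfrac{k}{k-1}$.

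For the lower bound, I would invoke \Cref{thm:wald_equality} to reduce to computing $\wald(I_G)$. The maximal independent sets of $G$ are precisely the parts $V_1,\dots,V_k$, so the minimal vertex covers are their complements, and the minimal primes of the edge ideal are $P_i = (x_v \mid v \notin V_i)$. Consequently $\SP(I_G)$ is cut out by $a_v \geq 0$ together with the one-facet inequalities $\sum_{v \notin V_i} a_v \geq 1$ for $1 \leq i \leq k$. Setting $S = \sum_{v} a_v$ and $S_i = \sum_{v \in V_i} a_v$, these inequalities read $S - S_i \geq 1$, and summing over $i$ gives $S = \sum_{i=1}^{k} S_i \leq k(S-1)$, hence $S \geq \tfrac{k}{k-1}$. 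By \Cref{thm:wald&areg_via_SP} this shows $\wald(I_G) \geq \tfrac{k}{k-1}$, which combined with \Cref{thm:wald_equality} closes the argument.

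The proof is largely an assembly of previously developed machinery rather than a genuinely difficult argument; the only step requiring a small trick is the averaging of facet inequalities in the lower bound. A natural alternative would be to classify the full vertices of $\SP(\gin(J_H))$ for every connected induced subgraph $H$ of $G$ and apply \Cref{cor:subgraph_decomp} directly, but since every such $H$ is itself a complete multipartite graph, this would essentially circle back to the statement being proved, so going through the edge ideal via \Cref{thm:wald_equality} is the cleaner route.
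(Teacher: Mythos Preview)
Your argument is correct, and the upper bound is handled exactly as in the paper. For the lower bound, however, you take a genuinely different route. The paper does not invoke \Cref{thm:wald_equality} here; instead it classifies the IDS's of $K_{c_1,\dots,c_k}$ explicitly (they are $\varnothing$ together with the complements $\widetilde{T}_m = [n]\setminus C_m$ of the parts of size $\geq 2$), writes down the resulting one-facet inequalities of $\SP(\gin(J_G))$, and then observes that the map sending $a$ to the vector of ``block sums'' $b_i = \sum_{j\in C_i}(a_j + a_{n+j})$ carries $\SP(\gin(J_G))$ into $\SP(\gin(J_{K_k}))$, from which the bound $\tfrac{k}{k-1}$ follows by \Cref{prop:complete_full_vertices}. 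Your shortcut through the edge ideal is cleaner and avoids the IDS bookkeeping, and your averaging of the $k$ facet inequalities is essentially the edge-ideal shadow of the paper's projection argument. The trade-off is that the paper's explicit IDS description is reused verbatim in the proof of \Cref{lem:partitions_to_complete_partite}, so by bypassing it you would have to reinstate that computation there; in isolation, though, your proof of the proposition is both valid and shorter.
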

\begin{proof}
	Let $C_1,\dots, C_k$ be the maximal sets of independent vertices forming the $k$-parition of $G$, with $|C_i|=c_i$ for $1\leq i\leq k$.
	We label the vertices $C_i=\{1^{(C_i)},\dots,c_i^{(C_i)}\}$ to distinguish the sets. Assuming that $$c_1=c_2=\dots=c_\alpha=1<c_{\alpha+1}\leq \dots \leq c_k,
	$$
	the IDS's of $G$ are $T_{\alpha+1},\dots,T_k,$ and $\varnothing$, where for $\alpha+1\leq m\leq k$, $T_m=[n]\setminus C_m.$ Since $G\setminus T_{m}=C_m$, where $C_m$ is viewed as the edgeless graph on $c_m$ vertices, each $T_m$ is associated to only one minimal prime of $\gin(J_G)$: $P_{T_m,C_m}$, in the notation of \Cref{prop:new_gin_prime_decomp}.
	These primes give $k-\alpha$ one-facets $F_m$ with inequality
	$$\sum_{i=1}^{k}\sum_{j=1}^{c_i}(x_{j^{(C_i)}}+y_{j^{(C_i)}})
	-\sum_{j=1}^{c_m} (x_{j^{(C_m)}}+y_{j^{(C_m)}})\geq 1.$$ There are only $n$ other minimal primes: $P_{\varnothing, \{\ell\}}$, for each $1\leq \ell\leq n$. These give one-facets with the following inequality:
	$$\sum_{i=1}^k\sum_{j=1}^{c_i}x_{j^{(C_i)}}-x_\ell\geq 1.$$
	Take $a\in \mathcal{V}(\SP(\gin(J_G))),$ and define $b_i\coloneqq \sum_{j=1}^{c_i} (a_{j^{(C_i)}}+a_{n+j^{(C_i)}})$. Then for $\alpha+1\leq m\leq k$ one has
	\begin{equation}
		\label{facets1}
		\sum_{i=1}^k b_i-b_m=\sum_{i=1}^{k}\sum_{j=1}^{c_i}(a_{j^{(C_i)}}+a_{n+j^{(C_i)}})
		-\sum_{j=1}^{c_m} (a_{j^{(C_m)}}+a_{n+j^{(C_m)}})\geq 1.
	\end{equation} If $m\leq \alpha$, then
	\begin{equation}
		\label{facets2}
		\sum_{i=1}^k b_i-b_m=\sum_{i=1}^{k}\sum_{j=1}^{c_i}(a_{j^{(C_i)}}+a_{n+j^{(C_i)}})
		- a_{1^{(C_m)}}-a_{n+1^{(C_m)}}\geq \sum_{i=1}^k\sum_{j=1}^{c_i}a_{j^{(C_i)}}-a_{1^{(C_m)}} \geq 1.
	\end{equation}
	Considering the map $\pi:\mathbb{R}^{2n}\rightarrow \mathbb{R}^{2k}$ defined by $(\pi(a))_i=\begin{cases}
		b_i, & i\leq k\\
		0, &i\geq k+1
	\end{cases}$, we see by \eqref{facets1} and \eqref{facets2} that $\pi(\SP(\gin({J_G})))\subset \SP(\gin(J_{K_k}))$.  So by \Cref{cor:subgraph_decomp} and \Cref{prop:complete_full_vertices},  the following inequality holds
	$$\sum_{j=1}^{2n} a_j\geq \min\left\{1+\max\{b_1,\dots,b_k\}\mid b\in \SP(\gin(J_{K_k}))\right\}=1+\frac{1}{k-1}=\frac{k}{k-1}.$$
	Taking the induced subgraph $H$ with $V_H=\{1^{(C_1)},\dots,1^{(C_k)}\}$, $H\cong K_k$. Then by 
	\Cref{cor:subgraph_decomp} and \Cref{cor:complete_wald}, we obtain the inequality
	$$\wald(\gin(J_G))\leq \wald(\gin(J_H))=\frac{k}{k-1}$$
	and the proposition follows.
\end{proof}

The following lemma is the final piece we need, relating a graph, a vertex coloring, and its corresponding multipartite supergraph.
\begin{lem}
	\label{lem:partitions_to_complete_partite}
	Partition a connected graph $H$ into independent sets of vertices $C_1,\dots, C_k$, with $|C_i|=c_i$ and $C_i=\{1^{(C_1)},\dots,c_i^{(C_i)}\}$. Let $G=K_{c_1,\dots,c_k}$ be the complete $k$-partite graph, with $C_1$ through $C_k$ as the corresponding maximal sets of independent vertices. Then $$\SP(\gin(J_H))\subseteq \SP(\gin(J_G)).$$ 
\end{lem}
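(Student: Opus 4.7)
The plan is to observe that $H$ sits as a spanning subgraph inside $G$, deduce a containment of multigraded generic initial ideals from the generator description, and then transfer this containment to symbolic polyhedra through symbolic powers.

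First, I would verify that $E(H)\subseteq E(G)$. Any edge $\{u,v\}\in E(H)$ has its endpoints in distinct classes $C_i$ and $C_j$, since each $C_i$ is an independent set of $H$. The graph $G=K_{c_1,\dots,c_k}$ contains every edge between distinct parts by definition, so $\{u,v\}\in E(G)$. Consequently, any path $i,v_1,\dots,v_s,j$ in $H$ is also a path in $G$ (in fact, on the same underlying vertex set, which is shared between $H$ and $G$).

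Second, I would invoke the explicit generator description in \Cref{thm:gin_gen}. Every generator $y_{v_1}\cdots y_{v_s}x_ix_j$ of $\gin(J_H)$ comes from a path $i,v_1,\dots,v_s,j$ of $H$, which by Step~1 is also a path of $G$, so the same monomial is a generator of $\gin(J_G)$. This gives the ideal containment $\gin(J_H)\subseteq \gin(J_G)$.

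Third, I would propagate this containment to all symbolic powers. Both ideals are squarefree monomial, hence radical. By \Cref{lem:primedecompositioncontainment}, every minimal prime $Q$ of $\gin(J_G)$ contains some minimal prime $P$ of $\gin(J_H)$. For monomial primes $P\subseteq Q$ (generated by subsets of variables) one has $P^{(m)}\subseteq Q^{(m)}$ for every $m\geq 1$, and intersecting over all minimal primes yields $\gin(J_H)^{(m)}\subseteq \gin(J_G)^{(m)}$.

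Finally, containment of monomial ideals yields containment of their Newton polyhedra, so $\NP(\gin(J_H)^{(m)})\subseteq \NP(\gin(J_G)^{(m)})$ for each $m\geq 1$. Scaling by $1/m$ and taking the union over $m$, as in \Cref{defn:newton_polyhedron}, gives $\SP(\gin(J_H))\subseteq \SP(\gin(J_G))$, which is the desired conclusion. I do not expect a serious obstacle: the argument is a short chain of routine containments, and the only step that requires any care is the initial combinatorial observation that $H$ is a subgraph of the multipartite graph $G$ on the same vertex set.
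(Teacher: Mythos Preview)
Your proof is correct and takes a cleaner route than the paper's. The paper works directly with the facet description of the symbolic polyhedra (\Cref{defn:facets}): it enumerates the minimal primes of $\gin(J_G)$ via the small list of IDS's of the complete $k$-partite graph, and for each one-facet inequality of $\SP(\gin(J_G))$ it builds---by a vertex-removal algorithm---an IDS $T_{m,\Omega}$ of $H$ whose associated minimal prime yields a stronger inequality already satisfied on $\SP(\gin(J_H))$. This amounts to a constructive, case-by-case verification that every minimal prime of $\gin(J_G)$ contains a minimal prime of $\gin(J_H)$. You reach the same conclusion far more directly: the observation $E(H)\subseteq E(G)$ plus the path description of generators (\Cref{thm:gin_gen}) gives $\gin(J_H)\subseteq\gin(J_G)$ outright, after which \Cref{lem:primedecompositioncontainment} supplies the prime containment abstractly and the passage to symbolic powers and Newton polyhedra is routine. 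Your argument is shorter and more conceptual; the paper's has the minor virtue of exhibiting the specific minimal primes involved, in keeping with the explicit facet-by-facet style used in the surrounding results (e.g.\ \Cref{prop:complete_kpartite} and \Cref{thm:wald_equality}).
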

\begin{proof}
	Again assume $c_1=\dots=c_\alpha=1<c_{\alpha+1}\leq \dots\leq c_k$. Then as in \Cref{prop:complete_kpartite}, the IDS's of $G$ are $\widetilde{T_{\alpha+1}},\dots,\widetilde{T_k},$ 
	and $\varnothing$,  where for $\alpha+1\leq m\leq k$, $\widetilde{T_m}=[n]\setminus C_m.$ Each $\widetilde{T}_m$ corresponds to a one-facet $F_m$, and $T=\varnothing$ has $n$ corresponding one-facets. The minimal primes $P_{\varnothing, \{\ell\}}$ encoded by the latter $n$ one-facets are common to $\gin(J_H)$ and $\gin(J_G)$, since $V_H=V_G$ and $H$ is connected. We show that any $a\in \SP(\gin(J_H))$ satisfies the other $k-\alpha$ one-facet inequalities. 
	
	Fix $m$ satisfying $\alpha+1\leq m\leq k$ and let $T_{m,0}=\widetilde{T_m}$. For $i\geq 0$, if there is $v\in T_{m,i}$ such that $c_H(T\setminus\{v\})\geq c_H(T)$, let $T_{m,i+1}=T_{m,i}\setminus\{v\}$. This algorithm will terminate since $H$ is finite and connected, and $c_H(T_0)=c_G(\widetilde{T_m})=c_m$; call its output $T_{m,\Omega}$. By construction, $T_{m,\Omega}$ is an IDS for $H$, and $T_{m,\Omega}\subset \widetilde{T_m}$. 
	
	Let $H_1,\dots,H_{c_H(T_{m,\Omega})}$ be the connected components of $H\setminus T_{m,\Omega}$. Each $H_j$ contains at most one vertex of $C_m$, since otherwise the number of connected components would have decreased at some point in the algorithm. If $H_j\cap C_m\neq \varnothing$, let $v_j$ be the unique element in $H_j\cap C_m$. Otherwise, pick $v_j\in H_j$ arbitrarily. Setting $V=\{v_1,\dots,v_{c_H(T_{m,\Omega})}\}$, we have the following minimal prime of $H$ from \Cref{prop:new_gin_prime_decomp}:
	$$P_{T_{m,\Omega},V}=(x_i,y_i\mid i\in T_{m,\Omega})+\sum_{j=1}^{c_H(T_{m,\Omega})}(x_i\mid i\in H_j \setminus \{v_j\}).$$
	Since $a$ satisfies the facet inequality
	$$\sum_{i\in T_{m,\Omega}}(a_i+a_{n+i})+\sum_{j=1}^{c_H(T_{m,\Omega})}\sum_{i\in H_j\setminus\{v_j\}}a_i\geq 1,$$ 
	and
	$$T_{m,\Omega}\cup \bigcup_{j=1}^{c_H(T_{m,\Omega})}H_j\setminus\{v_j\}\subset \widetilde{T}_m,$$ 
	we can conclude
	$$\sum_{i\in \widetilde{T_m}}(a_i+a_{n+i})\geq \sum_{i\in T_{m,\Omega}}(a_i+a_{n+i})+\sum_{j=1}^{c_H(T_{m,\Omega})}\sum_{i\in H_j\setminus\{v_j\}}a_i\geq 1.$$ Thus, $a$ satisfies all facet inequalities of $\SP(\gin(J_G))$, and the claim follows.
\end{proof}
We are ready to bound $\wald(\gin(J_G))$.
\begin{thm}
	\label{thm:chi_omega_bound}
	Let $G$ be a non-empty graph with chromatic number $\chi(G)$ and clique number $\omega(G)$. Then
	$$\frac{\chi(G)}{\chi(G)-1}\leq \widehat\alpha(\gin(J_G))=\wald(I_G)\leq \frac{\omega(G)}{\omega(G)-1}.$$
\end{thm}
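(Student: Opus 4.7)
The equality $\widehat\alpha(\gin(J_G))=\wald(I_G)$ is exactly the content of \Cref{thm:wald_equality}, so only the two inequalities bounding this common value need to be established, and the plan is to handle each bound by reducing to a graph whose Waldschmidt constant has already been computed.

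For the upper bound, I would use \Cref{cor:subgraph_decomp}, which writes $\wald(\gin(J_G))$ as the minimum, over connected induced subgraphs $H$ of $G$ with at least one edge, of the smallest coordinate sum among the full vertices $\mathcal{V}_F(\SP(\gin(J_H)))$. Since $G$ contains a clique $K_{\omega(G)}$ as an induced subgraph (and $\omega(G)\ge 2$ because $G$ is non-empty), it suffices to compute the contribution coming from this clique. But by \Cref{prop:complete_full_vertices}, the unique full vertex of $\SP(\gin(J_{K_{\omega(G)}}))$ has coordinate sum $\frac{\omega(G)}{\omega(G)-1}$, so the minimum in \Cref{cor:subgraph_decomp} is at most this value, giving $\wald(\gin(J_G))\le\frac{\omega(G)}{\omega(G)-1}$.

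For the lower bound, fix a proper $\chi(G)$-coloring of $G$, which partitions $V(G)$ into independent sets $C_1,\dots,C_{\chi(G)}$, and let $G'=K_{c_1,\dots,c_{\chi(G)}}$ be the associated complete multipartite graph on the same vertex set. By \Cref{lem:partitions_to_complete_partite}, we have $\SP(\gin(J_G))\subseteq\SP(\gin(J_{G'}))$. Since the coordinate sum is a linear functional on $\mathbb{R}^{2n}$, its minimum over a convex polyhedron is attained at a vertex, so containment of polyhedra reverses the inequality of their Waldschmidt constants; combined with \Cref{prop:complete_kpartite}, this yields
\[
\wald(\gin(J_G))\ \ge\ \wald(\gin(J_{G'}))\ =\ \frac{\chi(G)}{\chi(G)-1}.
\]

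The steps themselves are short once one has the machinery of \Cref{cor:subgraph_decomp}, \Cref{prop:complete_full_vertices}, \Cref{prop:complete_kpartite}, and \Cref{lem:partitions_to_complete_partite} in hand. The main subtlety — and the part I would verify most carefully — is the direction of the inequality coming from polyhedral containment: one must note that the minimum coordinate sum over a polyhedron is attained at a vertex so that passing from $\SP(\gin(J_G))$ to the larger polyhedron $\SP(\gin(J_{G'}))$ can only decrease, not increase, that minimum, which is why taking the \emph{smallest} admissible $k=\chi(G)$ (rather than any larger coloring number) produces the \emph{strongest} lower bound in light of the monotonicity of $k\mapsto k/(k-1)$.
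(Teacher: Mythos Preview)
Your proof is correct and follows essentially the same route as the paper's: the equality via \Cref{thm:wald_equality}, the upper bound by restricting to an induced $K_{\omega(G)}$ through \Cref{cor:subgraph_decomp} and the computation of its full vertices, and the lower bound by the polyhedral containment of \Cref{lem:partitions_to_complete_partite} together with \Cref{prop:complete_kpartite}. The only cosmetic difference is that the paper phrases the upper bound via $\wald(\gin(J_{K_{\omega(G)}}))$ (implicitly invoking \Cref{cor:complete_wald}) rather than directly reading off the coordinate sum of the full vertex from \Cref{prop:complete_full_vertices}, but this is the same computation one layer deeper.
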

\begin{proof}
	The equality is given by \Cref{thm:wald_equality}. For the upper bound, by definition, there is an induced subgraph $H\cong K_{\omega(G)}$ of $G$. Thus, by \Cref{cor:subgraph_decomp}, $\wald(\gin(J_G))\leq \wald(\gin(J_H))=\frac{\omega(G)}{\omega(G)-1}$.
	
	Finally, by definition, $G$ can be partitioned into non-empty independent sets $C_1,\dots, C_{\chi(G)}$. Then by \Cref{lem:partitions_to_complete_partite} and \Cref{prop:complete_kpartite},
	\begin{flalign*}
		\widehat\alpha(\gin(J_G))=&\min\left\{\sum_{i=1}^{2n} a_i \; \middle| \;a\in \SP(\gin(J_G))\right\}\\\geq &\min\left\{\sum_{i=1}^{2n} a_i \; \middle| \;a\in \SP(\gin(J_{K_{c_1,\dots,c_{\chi(G)}}}))\right\}=\frac{\chi(G)}{\chi(G)-1}.
	\end{flalign*}   
\end{proof}

Parts (i) and (iii) of \cite[Theorem 6.7]{SP} then also hold for $\gin(J_G)$, as demonstrated in the following  corollaries:
\begin{cor}
	Let $G$ be a weakly-perfect graph, meaning $\chi(G)=\omega(G)$. Then $$\wald(\gin(J_G))=\frac{\chi(G)}{\chi(G)-1}.$$
\end{cor}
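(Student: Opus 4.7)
The plan is to invoke \Cref{thm:chi_omega_bound} directly and observe that for a weakly-perfect graph the upper and lower bounds coincide, forcing equality. More precisely, the theorem establishes the double inequality
$$\frac{\chi(G)}{\chi(G)-1}\leq \wald(\gin(J_G))\leq \frac{\omega(G)}{\omega(G)-1},$$
so the first step is simply to substitute the hypothesis $\chi(G)=\omega(G)$ into the right-hand side. Since the function $t\mapsto t/(t-1)$ is a bijection on integers $\geq 2$ (and $G$ is non-empty so $\omega(G)\geq 2$), the two bounds become numerically equal, and the sandwich closes to yield the desired equality.

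There is essentially no obstacle here beyond checking that the hypothesis of \Cref{thm:chi_omega_bound} is satisfied, namely that $G$ is non-empty. This is implicit in the notion of being weakly-perfect (otherwise $\chi(G)=\omega(G)=0$ or the statement degenerates), but it is worth noting explicitly so the formula $\chi(G)/(\chi(G)-1)$ is well-defined. Once this is observed, the corollary is just one line of algebra applied to the conclusion of the previous theorem.

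In summary, the proof reduces to writing
$$\frac{\chi(G)}{\chi(G)-1}\leq \wald(\gin(J_G))\leq \frac{\omega(G)}{\omega(G)-1}=\frac{\chi(G)}{\chi(G)-1},$$
and concluding equality throughout. No further combinatorial or polyhedral argument is needed, as all of the work has been absorbed into \Cref{thm:chi_omega_bound}.
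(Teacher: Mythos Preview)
Your proposal is correct and matches the paper's intended approach: the corollary is stated without proof precisely because it follows immediately from \Cref{thm:chi_omega_bound} by substituting $\chi(G)=\omega(G)$ so that the two bounds coincide. Your only addition is the explicit remark that $G$ must be non-empty for the formula to make sense, which is a reasonable clarification.
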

\begin{cor}
	Let $G$ be a bipartite graph. Then $\widehat\alpha(\gin(J_G))=2$.
\end{cor}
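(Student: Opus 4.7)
The plan is to deduce the corollary immediately from \Cref{thm:chi_omega_bound} by observing that for a non-empty bipartite graph $G$ both the chromatic number and the clique number equal $2$, which pins the Waldschmidt constant between $\tfrac{2}{2-1}=2$ and $\tfrac{2}{2-1}=2$.

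More concretely, I would first note that since $G$ is bipartite and we are implicitly assuming $J_G \neq 0$ (otherwise $\widehat\alpha$ is not defined in the usual sense), $G$ contains at least one edge, so $\chi(G) \geq 2$ and $\omega(G) \geq 2$. By the definition of a bipartite graph, its vertex set can be partitioned into two independent sets, which yields a proper $2$-coloring and shows $\chi(G) \leq 2$. Likewise, a bipartite graph contains no triangle (no odd cycle), so $\omega(G) \leq 2$. Combining these inequalities gives $\chi(G) = \omega(G) = 2$.

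Substituting these values into the bounds
$$\frac{\chi(G)}{\chi(G)-1}\leq \widehat\alpha(\gin(J_G))\leq \frac{\omega(G)}{\omega(G)-1}$$
from \Cref{thm:chi_omega_bound} yields $2 \leq \widehat\alpha(\gin(J_G)) \leq 2$, hence $\widehat\alpha(\gin(J_G))=2$, which is the claim.

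There is no real obstacle here; the work has been done in \Cref{thm:chi_omega_bound}, and the corollary is a one-line specialization to the case $\chi(G)=\omega(G)=2$. The only subtlety worth flagging is the non-emptiness convention: if one allows $G$ to have no edges then $J_G=(0)$ and both sides of the claim degenerate, so the statement is meant for bipartite graphs with at least one edge, in which case the bounds pinch to $2$ as above.
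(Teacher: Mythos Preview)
Your proof is correct and essentially matches the paper's approach: both derive the lower bound $\widehat\alpha(\gin(J_G))\geq 2$ from \Cref{thm:chi_omega_bound} using $\chi(G)=2$. The only minor difference is that the paper obtains the upper bound from the general inequality $\widehat\alpha(\gin(J_G))\leq \alpha(\gin(J_G))=2$ rather than from $\omega(G)=2$, but this is a cosmetic distinction.
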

\begin{proof}
	By assumption, $\chi(G)=2$. Since $\widehat\alpha(\gin(J_G))\leq \alpha(\gin(J_G))=2$ for any graph, by \Cref{thm:chi_omega_bound}, the result holds.
\end{proof}

\section{Open Problems}
We conclude with a few relevant open questions. In light of \Cref{closed_graph_cor}, a natural question arises of whether there exist graphs whose binomial edge ideals have an asymptotic regularity not equal to two. We believe that the net graph is such an example. 
\begin{conj}
	Letting $N$ be the net graph, shown in \Cref{fig:net},  $\areg(J_N)=3$.
\end{conj}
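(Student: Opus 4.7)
The plan is to prove $\areg(J_N) = 3$ by establishing matching upper and lower bounds, working with the primary decomposition of $J_N$ together with the symbolic polyhedron of its monomial deformation. First, I would record the eight irredundant disconnecting sets of the net, namely $\emptyset$, $\{1\}$, $\{2\}$, $\{3\}$, $\{1,2\}$, $\{1,3\}$, $\{2,3\}$, and $\{1,2,3\}$, so that \Cref{thm:beiprimedecomp} gives $J_N = J_{K_6} \cap \bigcap_{T \ne \emptyset} P_T(N)$. The crucial prime $P_{\{1,2,3\}} = (x_1, y_1, x_2, y_2, x_3, y_3)$ is a variable complete intersection, hence its symbolic and ordinary powers coincide and its contribution to $\reg(J_N^{(m)})$ is well understood. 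I would then compute $\reg(J_N^{(m)})$ for small values of $m$ in Macaulay2 to confirm the pattern $\reg(J_N^{(m)}) = 3m + c_0$ and to identify the specific minimal generators and syzygies realizing this growth rate.

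For the lower bound $\areg(J_N) \ge 3$, the proposed strategy is to exhibit a family of minimal generators of $J_N^{(m)}$ of degree approximately $3m$ whose presence in the minimal free resolution forces $\reg(J_N^{(m)}) \ge 3m + c_0$. Since every proper connected induced subgraph of $N$ is closed (path, triangle, paw, or bull) and therefore has asymptotic regularity $2$ by \Cref{closed_graph_cor}, the subgraph bound of \Cref{prop: betti subgraph} is by itself insufficient, so the construction must simultaneously exploit the interaction of the triangle with all three pendant edges. A natural candidate is a suitable combination of the three pendant binomials $x_1 y_4 - x_4 y_1$, $x_2 y_5 - x_5 y_2$, $x_3 y_6 - x_6 y_3$ multiplied by a triangle binomial, raised to the $m$-th power, and then verified to lie in every $P_T^{(m)}$ while generating a new minimal degree in the symbolic filtration.

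For the upper bound, I would first compute $\areg(\inid(J_N))$ using \Cref{cor:subgraph_decomp}: the full-vertex contributions from each proper connected induced subgraph are bounded by $2$ (by the closed-graph classification above), so the maximum comes from $\mathcal{V}_F(\SP(\inid(J_N)))$ itself, which I would enumerate directly from the minimal primes $Q_{T,U}$ of \Cref{prop:in_prime_decomp}. If the maximum coordinate sum there is exactly $3$, one obtains $\areg(\inid(J_N)) = 3$, and the hope is to transfer this to $\areg(J_N)$ via a filtration argument. The main obstacle is precisely this transfer step: the inequality $\reg(I) \le \reg(\inid I)$ applied to each $J_N^{(m)}$ gives only $\reg(J_N^{(m)}) \le \reg(\inid(J_N^{(m)}))$, and the further containment relating $\inid(J_N^{(m)})$ to $\inid(J_N)^{(m)}$ need not be in the favorable direction. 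Overcoming this likely requires either a new combinatorial identity specific to the net structure, an iterated Mayer--Vietoris sequence on the primary decomposition of $J_N^{(m)}$, or a direct resolution computation exploiting the $\mathbb{Z}/3$ symmetry of $N$ to control the asymptotic behavior without passing through initial ideals.
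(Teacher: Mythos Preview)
The statement you are attempting to prove is labeled a \emph{Conjecture} in the paper and appears in the section ``Open Problems.'' The paper provides no proof; it is explicitly left open. Hence there is nothing to compare your proposal against, and what you have written is not a proof but a research outline with acknowledged gaps.

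That said, a few remarks on the outline itself. For the lower bound, exhibiting minimal generators of $J_N^{(m)}$ of degree $3m + O(1)$ would indeed give $\reg(J_N^{(m)}) \ge 3m + O(1)$, since a minimal generator in degree $d$ contributes $\beta_{0,d} \ne 0$. But your candidate element is only sketched; you would need to verify membership in every $P_T^{(m)}$ and, crucially, minimality, which is the hard part. For the upper bound, you correctly identify the obstruction: $\reg(J_N^{(m)}) \le \reg(\inid(J_N^{(m)}))$ is standard, but there is no general comparison between $\inid(J_N^{(m)})$ and $\inid(J_N)^{(m)}$ in the direction you need, and for non-closed graphs like the net the symbolic powers of $J_N$ and of $\inid(J_N)$ are genuinely different objects. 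Finally, note that the paper does not even establish that $\areg(J_N)$ exists: \Cref{closed_graph_regularity} covers only ideals with $I^{(m)} = I^m$, and the net is the prototypical example where this fails. Any complete argument would first have to prove the limit exists, which your outline does not address.
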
 
Moreover, \Cref{cor:induced_path_lower_bound} gives a lower bound on the asymptotic regularity of $\areg(\gin(J_G))$. We can analogously bound $\areg(\inid(J_G))$ below by the length of the longest admissible path on $G$, which must be isomorphic to a path graph by definition. We are not aware of any graphs $G$ for which equality is not achieved. This motivates the following conjecture.
\begin{conj}
	For an enumerated graph $G$, let $\ell$ be the length of the longest induced path in $G$, and let $\ell_<$ be the length of the longest admissible path in $G$.  
	Then
	\begin{flalign*}
		\areg(\gin(J_G))&=\ell, \\
		\areg(\inid(J_G))&=\ell_<.
	\end{flalign*}
\end{conj}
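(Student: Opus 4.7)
The two equalities each decompose into matching lower and upper bounds, and the lower bounds are essentially already in hand. \Cref{cor:induced_path_lower_bound} gives $\areg(\gin(J_G)) \geq \ell$ directly. For $\inid(J_G)$, the same template applies: by \Cref{thm:inid_gen}, each admissible path $\pi$ produces a squarefree minimal generator of $\inid(J_G)$ (minimality is exactly the content of the third bullet of the admissible path definition), which by \Cref{prop:0/1_vertices} yields a $0/1$ vertex of $\SP(\inid(J_G))$ whose coordinate sum equals the length of $\pi$. Maximizing over admissible paths gives $\areg(\inid(J_G)) \geq \ell_<$. The substantive content of the conjecture is therefore the two matching upper bounds.

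For the upper bounds, my first move is to invoke \Cref{cor:subgraph_decomp} to reduce to the following local statement: for every connected induced subgraph $H \in \mathcal{H}_G$ and every full vertex $a \in \mathcal{V}_F(\SP(I_H))$, we have $\sum_i a_i \leq \ell(H)$ when $I_H = \gin(J_H)$, and $\sum_i a_i \leq \ell_<(H)$ when $I_H = \inid(J_H)$. Because an induced (resp.\ admissible) path in $H$ remains induced (resp.\ admissible) in $G$, we have $\ell(H) \leq \ell$ and $\ell_<(H) \leq \ell_<$, so the local bounds propagate to the global ones via the $\areg$ half of \Cref{cor:subgraph_decomp}.

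To attack the local statement, I would translate it through \Cref{prop: vsp}: a full vertex $a = \tfrac{1}{q}(z_1, \dots, z_{2|H|})$ corresponds to a minimal generator $g = x^{z}y^{z'}$ of $I_H^{(q)}$ whose coordinate sum $\sum_i a_i$ equals $\deg(g)/q$. The task becomes showing that every minimal generator $g$ of $\gin(J_H)^{(q)}$ (resp.\ $\inid(J_H)^{(q)}$) satisfies $\deg(g) \leq q \cdot \ell(H)$ (resp.\ $\leq q \cdot \ell_<(H)$). I would aim for a proof by contradiction: if $\deg(g) > q \cdot \ell(H)$, one should be able to locate a variable in $\mathrm{supp}(g)$ whose removal preserves containment in every $P_{T,U}^q$ appearing in the primary decomposition, contradicting minimality. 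The intuition behind this is that a minimal generator of the $q$-th symbolic power ought to decompose into $q$ \emph{path units}, each of degree at most $\ell(H)$.

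The main obstacle is precisely this decomposition step. The minimal primes $P_{T,U}$ described by \Cref{prop:new_gin_prime_decomp} vary widely in size and internal structure, and, unlike the clean LP-duality argument that ties $\wald(I_G)$ to cliques and colorings (\Cref{thm:chi_omega_bound}), the dual certificate for asymptotic regularity must encode \emph{longest induced paths}, a markedly more delicate combinatorial object. A promising avenue is to phrase the maximum of $\sum_i a_i$ over full vertices as a linear program whose constraints range over minimal primes and then construct, via a greedy or path-tracing argument on $H$, a feasible dual solution of value $\ell(H)$. The $\inid$ case would be treated in parallel using \Cref{prop:in_prime_decomp}, with admissible paths in place of induced paths; the fact that admissible paths are themselves isomorphic to path graphs might in fact simplify that half of the argument.
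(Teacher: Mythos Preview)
This statement is a \emph{conjecture} in the paper, placed in the Open Problems section; the paper does not prove it. The only content the paper offers toward it is precisely the lower-bound half you sketch: \Cref{cor:induced_path_lower_bound} for $\gin(J_G)$, together with the remark that the same mechanism (minimal squarefree generators $\leftrightarrow$ $0/1$ vertices via \Cref{prop:0/1_vertices}) gives $\areg(\inid(J_G))\geq \ell_<$. Your treatment of those lower bounds matches the paper's.

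For the upper bounds, there is nothing in the paper to compare your plan against, because the authors explicitly leave them open. Your reduction via \Cref{cor:subgraph_decomp} to bounding coordinate sums of full vertices of connected induced subgraphs is a legitimate reformulation, and recasting that as a degree bound on minimal generators of $I_H^{(q)}$ through \Cref{prop: vsp} is sound. But the step you flag as ``the main obstacle'' is indeed the entire content of the conjecture: you would need to show that every minimal monomial generator of $\gin(J_H)^{(q)}$ has degree at most $q\cdot\ell(H)$, and the ``decompose into $q$ path units'' heuristic is not backed by any mechanism in the paper (the symbolic powers here are intersections over primes $P_{T,U}$ of very different heights, and there is no binomial-type expansion like \eqref{eq: binomial} available). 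The LP-duality idea is a reasonable angle, but constructing a feasible dual of value $\ell(H)$ amounts to exhibiting, for each connected $H$, a probability distribution over the minimal primes $P_{T,U}$ whose expected indicator vector has every coordinate $\leq \ell(H)^{-1}$ times the all-ones functional; nothing in the paper suggests how to do this, and longest-induced-path invariants are notoriously hard to capture this way. So your proposal is a coherent outline of where the difficulty lies, not a proof, and the paper does not supply one either.
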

\smallskip
{\bf Acknowledgements.}
We would like to thank all those involved in the International REU in Guanajuato, especially the organizers Eloísa Grifo, Jack Jeffries,
Luis Núñez Betancourt, Alexandra Seceleanu, Adam LaClair, Pedro Angel Ramírez Moreno, and Shahriyar Roshan Zamir. We specifically like to thank Alexandra Seceleanu and Adam LaClair for many helpful comments and discussions.

This research was funded by NSF RTG Grant DMS-2342256 \emph{Commutative Algebra at Nebraska} and by SECIHTI grants CF-2023-G-33 \emph{Redefiniendo fronteras entre el \'algebra conmutativa, la teor\'ia de c\'odigos y la teor\'ia de singularidades} and Grant CBF 2023-2024-224: \emph{\'Algebra conmutativa, singularidades y c\'odigos}, as part of the the International REU in Commutative Algebra, held at CIMAT in Summer 2025, and organized by CIMAT and the University of Nebraska-Lincoln.
	
	\bibliographystyle{amsplain}
	\bibliography{references}
	
\end{document}